 \newtheorem{theorem}{Theorem}[section]
\newtheorem{proposition}[theorem]{Proposition}
\newtheorem{corollary}[theorem]{Corollary}
\newtheorem{remark}[theorem]{Remark}
\newtheorem{lemma}[theorem]{Lemma}
\def\be{\begin{equation}}
\def\ee{\end{equation}}
\def\ben{\begin{displaymath}}
\def\een{\end{displaymath}}
\def\baa{\begin{eqnarray}}
\def\eaa{\end{eqnarray}}
\def\ba{\begin{array}}
\def\ea{\end{array}}
\renewcommand{\leq}{\leqslant}
\renewcommand{\geq}{\geqslant}
\newcommand{\supp}{\operatorname{supp}}
\newcommand{\Tr}{\operatorname{Tr}}
\begin{document}
\title{Determinants of Laplacians for constant curvature metrics  with three conical singularities on $2$-sphere}

\author{Victor Kalvin}

\date{}
\maketitle

\begin{abstract} 
We deduce an explicit closed formula for the  zeta-regularized spectral determinant of the Friedrichs Laplacian on the Riemann sphere equipped with arbitrary constant curvature (flat, spherical, or hyperbolic) metric having three conical singularities of order $\beta_j\in(-1,0)$ (or, equivalently,  of angle $2\pi(\beta_j+1)$).  
We  show that  among the metrics with a fixed value of the sum $\beta_1+\beta_2+\beta_3$ and a fixed surface area, those with  $\beta_1=\beta_2=\beta_3$ correspond to  a stationary point of the determinant. If, in addition,  the surface area is sufficiently small, then the stationary point is a minimum. 

As a crucial step towards obtaining these results we find a relation between the determinant of Laplacian and the Liouville action introduced by A. Zamolodchikov and Al. Zamolodchikov in connection with the celebrated DOZZ formula for the three-point  structure constants of the Liouville field theory. 
\end{abstract}

\section{Introduction and main results}\label{Intro}
\subsection{Introduction}
There are not so many geometric settings  in which explicit closed formulae for the determinants of Laplacians are known, see e.g.~\cite{AKR,Au-Sal,AS2,KalvinCCM,Klevtsov,Polch,Spreafico,SpreaficoZerbini,Weisberger}. 
In this paper we derive one for the determinant of Friedrichs Laplacian  on the Riemann sphere equipped with arbitrary constant curvature (flat, spherical, or hyperbolic) metric having three conical singularities.  
We explicitly express the determinant in terms of the surface area and the orders of  conical singularities (angles).  This generalizes previously known explicit formulae for the determinant of Friedrichs Laplacians on the isosceles Euclidean triangle envelopes~\cite{AS2,KalvinJGA},  on the spindles~\cite{KalvinJFA,Klevtsov,SpreaficoZerbini}, and on the standard round spheres, e.g.~\cite{OPS}. Besides,  this allows one  to  independently deduce explicit expressions for the determinant of  Friedrichs  Dirichlet Laplacians on the constant curvature cones  (and, in particular, the one for the flat cones~\cite{Spreafico}) as in~\cite{KalvinCCM}.

 We also study extremal properties of the determinant considered as a function of the orders  $\beta_j\in(-1,0)$ of conical singularities (or, equivalently, as a function of the angles $2\pi(\beta_j+1)$), while the sum  $|\pmb\beta|:=\beta_1+\beta_2+\beta_3$  remains fixed. We show that   $\beta_1=\beta_2=\beta_3=|\pmb\beta|/3$ is always a stationary point (for any fixed $|\pmb\beta|\in(-3,0)$),  it  correspond to the most symmetrical surfaces.  Moreover, the stationary point is a minimum of the determinant provided that among the surfaces with a fixed value of $|\pmb\beta|$ we consider only those with surface area $S$ not exceeding a certain value that may depend on $|\pmb\beta|$.

The problem we study  is closely related to the celebrated DOZZ formula of H. Dorn, H.-J. Otto~\cite{DO} and   A. Zamolodchikov, Al. Zamolodchikov~\cite{Z-Z}. The DOZZ formula is a heuristically deduced explicit expression  for the three-point structure constant of the Liouville conformal field theory. In the classical limit  (i.e. as the Liouville coupling constant goes to zero) the behaviour of the  structure constant is governed by the Liouville action evaluated on the potential $\phi$ of a constant curvature metric $e^{2\phi}|dz|^2$  on the Riemann sphere $\overline{\Bbb C}=\Bbb C\cup\{\infty\}$ with conical singularities of order $\beta_j$ at three distinct points $p_j\in\Bbb C$~\cite[Eq. (3.20)]{Z-Z}; see also Remark~\ref{GLA}.

 We study  the determinant of (the Friedrichs selfadjoint extension of) the Laplacian $-4e^{\phi(z)}\partial_z\partial_{\bar z}$ induced by the metric $e^{2\phi}|dz|^2$ on  $\overline{\Bbb C}$. As a crucial step in our study of the determinant, we express it in terms of the  Liouville action 
and some other explicit functions of  $\beta_j$ and $S$, where $S$ is the area of the surface $(\overline{\Bbb C}, e^{2\phi}|dz|^2)$. On this step we rely on the Polyakov-Alvarez type formula and the BFK (Burghelea-Friedlander-Kappeler~\cite{BFK}) formula proven in~\cite{KalvinJFA}.  

 The resulting anomaly formula for the determinant  of Laplacian is not exactly of Polyakov or Polyakov-Alvarez type: 
It is a new formula that relates the determinant of the Laplacian on the singular constant curvature surface $(\overline{\Bbb C}, e^{2\phi}|dz|^2)$  to the determinant of the Euclidean Laplacian on a disk of radius $R$ as  $R\to\infty$. This  allows us to make a link between our study of  determinants of Laplacians and the  work of  A. Zamolodchikov and  Al. Zamolodchikov on conformal bootstrap in the Liouville field theory~\cite{Z-Z}, where the Liouville action is introduced on a disk of radius $R$ as $R\to\infty$, and the Euclidean metric is used as a background metric on the disk.

We consider the constant curvature metrics of fixed area $S$, or, without loss of generality,  the metrics of unit area (as the case of area $S\neq 1$ can be immediately reduced to the case $S=1$ with the help of the standard  rescaling property of the zeta-regularized determinants). We show that the Liouville action, evaluated on the potential $\phi$ of  such a metric,   satisfies a system of  governing differential equations.
 This system is slightly different form the one in~\cite{Z-Z}, where the curvature of the metrics (instead of the surface area) is fixed, but can also be easily integrated. 
 As a result, we obtain an explicit closed expression for the 
 Liouville action in terms of the orders $\beta_j$ of conical singularities. The expression is new, it involves derivatives of Hurwitz and Riemann zeta functions. This together with the anomaly formula leads to an explicit closed formula for the determinant of Laplacian in terms of $\beta_j$.
 
This result can be of interest in Number theory (e.g. in connection with the problem of finding particular values of the Selberg zeta function) and various areas of theoretical physics.
 We believe that it will also allow us to explicitly evaluate the determinant of Laplacians on the smooth and singular surfaces obtained by cutting and gluing  genus zero constant curvature  surfaces with three conical singularities;  for some results in this direction see~\cite{KalvinJGA19,IMRN,Bul}.

In the flat case our explicit formula for the determinant significantly simplifies and, in particular,  generalizes the results in~\cite{KalvinJGA}, where only the  case  $\beta_1=\beta_3=:\beta$ and $\beta_2=-2-2\beta$ is studied. It is also interesting to note that the celebrated partially heuristic Aurell-Salomonson formula for determinants of Laplacians on polyhedra with spherical topology~\cite[eqn. (50)]{AS2}  returns an equivalent result; the Aurell-Salomonson formula received a rigorous mathematical proof in~\cite[Sec. 3.2]{KalvinJFA}.

In the limit cases of a spindle (i.e. when $\beta_j\to 0^-$ while $\beta_k=\beta_\ell$ with $k,\ell\neq j$) and of a standard round sphere (i.e. when $\beta_j\to 0^-$ for $j=1,2,3$) we independently recover the corresponding explicit expressions for the determinant  known from~\cite{KalvinJFA,Klevtsov,SpreaficoZerbini}.

 Let us also mention that the anomaly formula for the determinant of Laplacian can be easily extended to the case of a constant curvature metric with any number $n$ of conical singularities on a $2$-sphere (the proof remains essentially the same). However,  an explicit construction of the uniformazation map for the $2$-sphere with $n$ conical singularities is needed in order to turn the anomaly formula into a closed explicit formula for the determinant of Laplacian. As is well-known, starting from $n=4$, an explicit construction of the uniformazation map is an open long standing problem, closely related to the problem of finding  explicit expressions for the so-called conformal blocks, accessory parameters, and the Liouville action, see e.g.~\cite{CMS,HJ,T-Z,Z-Z}. (In the case of $n=3$  the construction is essentially well-known, starting from $n=4$ it is known for some particular ``symmetrical'' cases only, but not in general.)  This represents a substantial  difficulty towards obtaining an explicit formula for the determinant of Laplacian in the case of $n\geq 4$.

Having at hands the anomaly formula for the determinant of Laplacian and the governing differential equations for the Liouville action, it is fairly  easy to study  extremal properties  of the determinant as a function of the orders $\beta_j$ of conical singularities   (angles).    We derive formulae for the second order derivatives of the determinant and conclude that for each $|\pmb\beta|\in(-3,0)$ the stationary point $\beta_1=\beta_2=\beta_3=\frac {|\pmb\beta|} 3$  is a minimum of the determinant if the surface area $S$ is sufficiently small, i.e. $S<S_0=S_0(|\pmb\beta|)$. 

Recall that for smooth varying metrics,  extrema of  determinants of Laplacians and compactness of families of isospectral metrics were studied  in a series of papers~\cite{OPS,OPS.5,OPS1} by Osgood, Phillips, and Sarnak; see also~\cite{Sarnak} for a review and~\cite{Kim} for an extension of their results.   
While only a few results on extremal properties of determinants under variation of angles (or, equivalently, orders of conical singularities) are available yet: for the isosceles Euclidean triangle envelopes~\cite{KalvinJGA}, and for the spherical metrics with two conical singularities on a $2$-sphere~\cite[Sec. 3.1]{KalvinJFA}.

 This paper is organized as follows. The next Subsection~\ref{MainResults}  contains preliminaries and  the main results: Anomaly formula for the determinant of Laplacian (Theorem~\ref{THMdet}), Explicit formula for the Liouville action (Theorem~\ref{LAexplInt}), Resulting explicit formula for the determinant of Laplacian (Corollary~\ref{COR}), and Stationary points of the determinant (Theorem~\ref{TSP}).

    In Section~\ref{DetLap} we prove the anomaly formula (Theorem~\ref{THMdet}). 
    
    In Section~\ref{LiAc} we  study the Liouville action. Namely, in Subsection~\ref{SecGE} we deduce  the system of governing differential equations for the Liouville action. In Subsection~\ref{LA_S2} we  integrate the system of governing equations and find the constant of integration, this proves Theorem~\ref{LAexplInt} and Corollary~\ref{COR}.

    In Section~\ref{FSL} we specify the explicit formula for the determinant in the case of a flat and limit spherical metrics.  In particular, in Subsection~\ref{FLAT_METRICS} we   show that in the case of a flat metric the explicit formula for the determinant in Corollary~\ref{COR} significantly simplifies and generalizes our previous results in~\cite{KalvinJGA}. In Subsections~\ref{SubSpindle} and~\ref{StRoundSph} we show that  in the limit cases of a spherical metric our explicit formula for the determinant correctly reproduces the corresponding results in~\cite{KalvinJFA,Klevtsov,SpreaficoZerbini}.
 
 Finally, in Section~\ref{SecSPoints} we study stationary points of the determinant. We deduce explicit formulas for the (logarithmic) second order derivatives of the determinant, and prove Theorem~\ref{TSP}.
 
 Auxiliary results on conformal constant curvature singular metrics on the Riemann sphere are collected in Appendix~\ref{NP}. 
In particular, in  Subsection~\ref{EandU}  we discuss conditions that guarantee existence and uniqueness of a constant curvature conformal metric with three conical singularities. Then in Subsection~\ref{UACCM} we explicitly construct  the unit area metrics as functions of  the orders of conical singularities.

 \subsection{Preliminaries and main results}\label{MainResults}
Consider the Riemann sphere $\overline{\Bbb C}=\Bbb C\cup\{\infty\}$ with three distinct marked points $p_j\in\Bbb C$.
By applying a suitable M\"obius transformation we  can always normalize so that $p_1$, $p_2$, and $p_3$ are any three distinct points of our choice. 
 Let  $\beta_1$, $\beta_2$, and $\beta_3$ be  three real numbers such that $-1<\beta_j<0$ and   
 $\beta_j-{|\pmb\beta|}/2>0$. Here $$|\pmb\beta|=\beta_1+\beta_2+\beta_3$$ stands for the degree of the divisor $\pmb\beta=\sum_{j=1}^3\beta_j\cdot p_j$. The divisor is a formal sum.
 
  It turns out that for any $S>0$ there exists a unique conformal  constant curvature metric of area $S$  representing the divisor $\pmb\beta$. This means that the metric on the Riemann sphere $\overline{\Bbb C}$ has conical singularities of order $\beta_j$  (or, equivalently,  of angle $2\pi(\beta_j+1)$) at the points $p_j$. Or, more precisely,  that the metric can be written in the form $S\cdot e^{2\phi}|dz|^2$, where $\phi$ is a smooth function on $\Bbb C\setminus\{p_1,p_2,p_3\}$ satisfying the Liouville equation 
\begin{equation}\label{LEq1}
e^{-2\phi}(-4\partial_z\partial_{\bar z}\phi)=2\pi(|\pmb\beta|+2),\quad z\in\Bbb C\setminus\{p_1,p_2,p_3\},
\end{equation}
and having the following asymptotics at vicinities of the points $p_j\in \Bbb C$ and at infinity:
\be\label{ASphi}
\begin{aligned}
\phi(z) & =\beta_j\log|z-p_j|+\phi_j+o(1), \quad z\to p_j,
\\
 \phi(z)& =-2\log|z| +\phi_\infty+o(1), \quad z\to \infty.
\end{aligned}
\ee
Here the coefficients $\phi_j$  and $\phi_\infty$ are some functions of the orders $\beta_j$ of conical singularities. 

In what follows it is  important that  the coefficients $\phi_j$  can be explicitly expressed in terms of $\beta_j$. In fact,  (up to a normalization of the marked points $p_j$) the  unit area metric $e^{2\phi}|dz|^2$ can be explicitly constructed as the pullback  of the model metric $$
\ {4(1+2\pi(|\pmb\beta|+2)|w|^2)^{-2}|d w|^2}
$$
 by the Schwarz triangle function. Details can be found in Appendix~\ref{NP}, where, in particular, 
we find  explicit expressions for  the coefficients $\phi_j$ that correspond to the normalization $p_1=-1$, $p_2=0$ and $p_3=1$ (see Eq.~\eqref{phi123} and Eq.~\eqref{Phi}).
As for the coefficient $\phi_\infty$,  we only need to know that it is well defined.

By the Gauss-Bonnet theorem~\cite{Troyanov} the (regularized) Gaussian curvature $K$ of the metric $S\cdot e^{2\phi}|dz|^2$ is given by $K= {2\pi(|\pmb\beta|+2)} /S$.
Note that in the spherical ($K>0$) case the condition  $\beta_j-{|\pmb\beta|}/2>0$ is necessary and sufficient for the existence of a spherical metric with three conical singularities of order $\beta_j\in(-1,0)$, e.g.~\cite{Eremenko,LT,Troyanov,UY}. While 
 in the hyperbolic ($K<0$) and flat ($K=0$)  cases  this condition is a priori satisfied.

Consider a (unique) unit area constant curvature metric $e^{2\phi}|dz|^2$ representing the divisor $\pmb\beta$.  Let $L_{\pmb\beta}^2$  stand for  the space of functions on $\overline{\Bbb C}$ with finite norms
$$
\|f\|_{\pmb\beta}=\left(\int_{\Bbb C} |f(z)|^2e^{2\phi(z)}\frac {dz\wedge d\bar z}{-2i}\right)^{1/2}.
$$
In particular, the equality $\|1\|_{\pmb\beta}=1$ reflects the fact that   $e^{2\phi}|dz|^2$ is a unit area metric. 

 The Laplacian  $\Delta_{\pmb\beta}=-e^{-2\phi}4\partial_z\partial_{\bar z}$  on the Riemann sphere $\overline{\Bbb C}$ 
is an unbounded operator in the Hilbert space $L_{\pmb\beta}^2$, initially defined on the smooth functions supported outside of the conical singularities at $z=p_j$. The operator $\Delta_{\pmb\beta}$ is densely defined, but not essentially selfadjoint. We pick the Friedrichs selfadjoint extension, which we still denote by $\Delta_{\pmb\beta}$ and call the Friedrichs Laplacian or simply Laplacian for short.  The spectrum of $\Delta_{\pmb\beta}$ consists of non-negative isolated eigenvalues $0=\lambda_0<\lambda_1\leq \lambda_2\dots$ of finite multiplicity, and its  zeta-regularized spectral determinant can be introduced in the standard well-known way: 

 The spectral zeta function $s\mapsto\zeta_{\pmb\beta}(s)$ of $\Delta_{\pmb\beta}$,  defined by the equality $\zeta_{\pmb\beta}(s)=\sum_{j>0}\lambda_j^{-s}$ for $\Re s>1$,  extends by analyticity to a neighbourhood of $s=0$~\cite{KalvinJFA}. The zeta-regularized spectral determinant of the Laplacian $\Delta_{\pmb\beta}$ is given by 
 $$
 \det\Delta_{\pmb\beta}:=\exp(-\zeta_{\pmb\beta}'(0)). 
 $$
 This is a modified determinant, i.e. with zero eigenvalue excluded.
 
   Notice that  the  spheres $(\overline{\Bbb C}, e^{2\phi}|dz|^2)$ and  $(\overline{\Bbb C}, e^{2\phi\circ f}|\partial_z f|^2|dz|^2)$ are isometric via $f$, where $f: \overline{\Bbb C}\to \overline{\Bbb C}$ is a M\"obius transformation.  
 Thus $\det\Delta_{\pmb\beta}$ is invariant under the M\"obius transformations, and without loss of generality we can assume that the marked points $p_j$ are normalized, for instance,  so that $p_1=-1$, $p_2=0$, and $p_3=1$.

Now we are in position to formulate the first result of this paper: an anomaly formula for the determinant of Laplacian $\Delta_{\pmb\beta}$ that includes  the Liouville action as one of its terms. 

\begin{theorem}[Anomaly formula for determinant of Laplacain] \label{THMdet}  Assume that $\beta_j\in(-1,0)$ and $\beta_j-\frac{|\pmb\beta|}2>0$ for  $j=1,2,3$. 
Let  $\Delta_{\pmb\beta}$ stand for the Friedrichs Laplacian  on the Riemann sphere $\overline{\Bbb C}$ equipped with (unique)  \underline{unit area} constant  curvature   conformal metric $e^{2\phi}|dz|^2$ representing the divisor
\begin{equation}\label{DIVISOR}
\pmb\beta=\beta_1\cdot(-1)+\beta_2\cdot 0+\beta_3\cdot 1.
\end{equation}
Introduce the Liouville action  
\begin{equation}\label{LA1}
\mathcal S_{\pmb\beta}[\phi]:=   2\pi(|\pmb\beta|+2)\left(\int_{\Bbb C}\phi e^{2\phi}\frac {dz\wedge d\bar z }{-2i}-1\right)+2\pi\sum_{j=1}^3 \beta_j\phi_j+4\pi\phi_\infty
\end{equation}
and the functional  
\begin{equation}\label{FH}
\mathcal H_{\pmb\beta}[\phi]:=\exp\left\{2\sum_{j=1}^3 \left(\beta_j +1-\frac{1}{\beta_j+1}\right)\phi_j\right\},
\end{equation}
where the coefficients $\phi_j$ and $\phi_\infty$ are the same as in the asymptotics~\eqref{ASphi} (with $p_1=-1$, $p_2=0$, and $p_3=1$).

Then for the zeta-regularized spectral determinant of $\Delta_{\pmb\beta}$ we have 
\begin{equation}\label{DetDelta1}
\begin{aligned}
\log  {\det\Delta_{\pmb\beta}}=- \frac {|\pmb\beta|+1} 6-\frac 1 {12\pi}\Bigl(\mathcal S_{\pmb\beta}[\phi]-\pi \log \mathcal H_{\pmb\beta}[\phi]\Bigr)
-\sum_{j=1}^{3}\mathcal C(\beta_j)
\\
 -\frac 4 3 \log 2 -4\zeta_R'(-1) -\log\pi,
 \end{aligned}
 \end{equation}
where the function $\beta_j\mapsto \mathcal C(\beta_j)$ is  defined by the equality
\begin{equation}\label{Cb}
\mathcal C(\beta)=2\zeta'_B(0;\beta+1,1,1)-2\zeta_R'(-1)-\frac {\beta^2}{6(\beta+1)}\log 2 -\frac \beta {12}+\frac 1 2 \log(\beta+1).
\end{equation}
Here $\zeta'_B$ and $\zeta_R'$ stand for the derivatives with respect to $s$ of the Barnes double zeta function $\zeta_B(s;a,b,x)$ and the Riemann zeta function $\zeta_R(s)$ respectively.
\end{theorem}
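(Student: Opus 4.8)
The plan is to derive the anomaly formula~\eqref{DetDelta1} by comparing the zeta-regularized determinant of $\Delta_{\pmb\beta}$ on the singular constant curvature sphere with the determinant of the Euclidean Laplacian on a large disk, using the conformal variation (Polyakov--Alvarez type) formula and the BFK gluing formula from~\cite{KalvinJFA}. First I would fix a large radius $R$ and decompose the sphere $(\overline{\Bbb C},e^{2\phi}|dz|^2)$ into the Euclidean disk $D_R=\{|z|<R\}$ with the flat metric $|dz|^2$ and its complement equipped with the conformal factor $e^{2\phi}$; the conical singularities at $p_1=-1$, $p_2=0$, $p_3=1$ all lie inside $D_R$, so the conformal factor $e^{2\phi}$ must be kept on a neighbourhood of those points. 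More precisely I would introduce an intermediate surface obtained by gluing, along $\partial D_R$, the genuine singular piece carrying the conical points with an annular/exterior region, and track how $\det\Delta$ changes under (i) the conformal change $|dz|^2\rightsquigarrow e^{2\phi}|dz|^2$ away from the singularities (Polyakov--Alvarez, producing the bulk Liouville integral $\int\phi\,e^{2\phi}$ together with boundary terms on $\partial D_R$ involving $\phi$ and its normal derivative, which as $R\to\infty$ are governed by the asymptotics $\phi=-2\log|z|+\phi_\infty+o(1)$), and (ii) the insertion of each conical singularity of order $\beta_j$, which contributes the local term $\mathcal C(\beta_j)$ — this is exactly where the Barnes double zeta function $\zeta_B'(0;\beta+1,1,1)$ appears, via the known heat-trace/zeta computation for a flat or constant curvature cone of angle $2\pi(\beta_j+1)$ and the Friedrichs extension.

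Second, I would assemble these pieces with the BFK formula $\det\Delta_{\overline{\Bbb C}}=\det\Delta_{D_R,\mathrm{Dir}}\cdot\det\Delta_{\overline{\Bbb C}\setminus D_R,\mathrm{Dir}}\cdot\det N_R\cdot(\text{length/normalization factors})$, where $N_R$ is the Neumann jump (Dirichlet-to-Neumann) operator on the circle $\partial D_R$. The determinant of the Euclidean Dirichlet Laplacian on $D_R$ is classically known (it scales with $R$ and contributes the universal constants $-\frac43\log2$, $-4\zeta_R'(-1)$, $-\log\pi$ and the $\log R$ terms that must cancel), and the exterior piece, after the conformal map to a bounded domain, carries the remaining Liouville data. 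The $R\to\infty$ limit is the organizing principle: all $R$-dependent contributions — from $\det\Delta_{D_R}$, from the boundary integrals $\oint_{\partial D_R}\phi\,\partial_n\phi$ and $\oint_{\partial D_R}\partial_n\phi$, and from $\log\det N_R$ — must cancel, leaving the stated $R$-independent expression. Bookkeeping the constant $-\frac{|\pmb\beta|+1}{6}$ is the contribution of the Gauss--Bonnet/Euler-characteristic-type term: with curvature $K=2\pi(|\pmb\beta|+2)$ for the unit area metric, the integral $\frac{1}{12\pi}\int K\,dA$ together with the conical angle defects assembles into $\frac{|\pmb\beta|+1}{6}$ (note $\frac{1}{12\pi}\cdot 2\pi(|\pmb\beta|+2)=\frac{|\pmb\beta|+2}{6}$, and the conical corrections shift it to $\frac{|\pmb\beta|+1}{6}$; I would verify this against the Troyanov Gauss--Bonnet with defects).

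Third, I would identify precisely how the combination $\mathcal S_{\pmb\beta}[\phi]-\pi\log\mathcal H_{\pmb\beta}[\phi]$ emerges. The bulk term $2\pi(|\pmb\beta|+2)(\int\phi\,e^{2\phi}-1)$ in $\mathcal S_{\pmb\beta}$ is exactly $\int(-4\partial_z\partial_{\bar z}\phi)\,\phi\,\frac{dz\wedge d\bar z}{-2i}$ reorganized via the Liouville equation~\eqref{LEq1} minus the area normalization; the terms $2\pi\sum\beta_j\phi_j+4\pi\phi_\infty$ collect the finite parts of the local expansions at $p_j$ and at $\infty$ that survive regularizing the divergent Dirichlet energy $\int|\partial\phi|^2$ near the singularities. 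Meanwhile $\mathcal H_{\pmb\beta}[\phi]=\exp\{2\sum(\beta_j+1-\frac{1}{\beta_j+1})\phi_j\}$ records the mismatch between the naive conformal-variation coefficient $\beta_j$ at each cone and the correct Friedrichs coefficient, which involves $\frac{1}{\beta_j+1}$ (the reciprocal angle factor familiar from cone spectral theory); extracting this cleanly requires the careful local analysis of~\cite{KalvinJFA} near a conical point, separating the $\phi_j$-dependent part of the conformal anomaly at the cone from the $\beta_j$-only part that goes into $\mathcal C(\beta_j)$. I expect the main obstacle to be exactly this local/global separation: making rigorous the claim that, after BFK gluing and taking $R\to\infty$, every divergent or $R$-dependent quantity cancels and the conical contributions decouple into the clean sum $-\sum_j\mathcal C(\beta_j)$ plus the $\mathcal H_{\pmb\beta}$ correction, with the Barnes-zeta constant in $\mathcal C(\beta)$ coming out with the precise coefficients in~\eqref{Cb}. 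All the needed ingredients — the Polyakov--Alvarez formula with conical singularities and the BFK formula — are available from~\cite{KalvinJFA}, so the work is in the assembly and the asymptotic analysis rather than in proving new analytic facts.
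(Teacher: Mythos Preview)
Your proposal is correct and follows essentially the same approach as the paper: cut $(\overline{\Bbb C},e^{2\phi}|dz|^2)$ along $|z|=R$ via the BFK formula, apply the Polyakov--Alvarez type formula with conical singularities from~\cite{KalvinJFA} on the interior disk (with flat reference metric $|dz|^2$) and the classical Polyakov--Alvarez on the exterior ``disk'' (after $w=1/z$), use the known determinant of the Euclidean Dirichlet Laplacian on a disk, and let $R\to\infty$ so that all $\log R$ terms cancel; the result is then rewritten in terms of $\mathcal S_{\pmb\beta}[\phi]$ and $\mathcal H_{\pmb\beta}[\phi]$. One point you leave implicit but the paper handles cleanly: the Neumann jump factor $\det(\mathcal N_{\pmb\beta}\!\restriction_{|z|=R})/\ell$ is not merely absorbed into the $R\to\infty$ cancellation but is computed exactly (equal to $1/2$) by conformal invariance and comparison with the BFK decomposition of the standard round sphere along its equator.
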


As it was mentioned in the introduction, the equality~\eqref{DetDelta1}  is  a new anomaly formula for the determinant of Laplacian: It relates the determinant of the Laplacian $\Delta_{\pmb\beta}$ on the singular constant curvature sphere $(\overline{\Bbb C}, e^{2\phi}|dz|^2)$  to the determinant of Euclidean Laplacian on a disk of radius $R$ as  $R\to\infty$. The latter determinant does not appear in~\eqref{DetDelta1} because we  express it in terms of $R$  and then pass to the limit as $R\to \infty$;     for details we refer to  Section~\ref{DetLap}.

The definitions of the Liouville action $\mathcal S_{\pmb\beta}[\phi]$ in~\eqref{LA1},  the functional  $\mathcal H_{\pmb\beta}[\phi]$  in~\eqref{FH}, and the function $\beta\mapsto\mathcal C(\beta)$ in~\eqref{Cb} naturally come out of our study of the determinant of Laplacian. Later on we  show that $\mathcal S_{\pmb\beta}[\phi]$ is $4\pi$ times the regularized Liouville action introduced by A. Zamolodchikov and Al. Zamolodchikov in~\cite{Z-Z}. Our definition for  the Liouville action is also in agreement, for example, with those in~\cite{CMS,HJ,T-Z}. Let us also mention that there is a certain similarity between the functional $\mathcal H_{\pmb\beta}[\phi]$ and the K\"ahler potentials of the elliptic metrics in~\cite{T-Z2019}. 
A similar functional  also appears in~\cite{C-W, KalvinJFA}. 
The real analytic function $(-1,\infty)\ni\beta\mapsto \mathcal C(\beta)$, explicitly defined in~\eqref{Cb}, was first introduced in~\cite{KalvinJFA}. It describes the inputs into the determinant that come solely from the orders $\beta_j$ of conical singularities~\cite{KalvinJFA}, see also Remark~\ref{RCb}.

Recall that the coefficients $\phi_j$ can be explicitly expressed in terms of the orders $\beta_j$  of conical singularities, see the equalities~\eqref{phi123} and~\eqref{Phi} in Appendix~\ref{NP}. As a result, the definition~\eqref{FH} of $\mathcal H_{\pmb\beta}[\phi]$  immediately turns into an explicit formula for the function 
$(\beta_1,\beta_2,\beta_3)\mapsto \mathcal H_{\pmb\beta}[\phi]$. Therefore, in order to obtain an explicit formula for the determinant of Laplacian   $(\beta_1,\beta_2,\beta_3)\mapsto \det \Delta_{\pmb\beta}$
from the anomaly formula~\eqref{DetDelta1}, it remains to find an explicit expression for the Liouville action~\eqref{LA1}.  This is our next result. 

 \begin{theorem}[Explicit closed formula for Liouville action]\label{LAexplInt} 
For the Liouville action $\mathcal S_{\pmb\beta}[\phi]$ introduced in Theorem~\ref{THMdet} we have 
\begin{equation}\label{ExplLA}
\begin{aligned}
\frac 1 {4\pi} \mathcal S_{\pmb\beta}[\phi] = &-\frac{|\pmb\beta|+2}2( 2+ \log\pi)-\left(\frac{\beta_1^2+2\beta_1} 2-\frac{\beta_2^2+2\beta_2} 2  +\frac{\beta_3^2+2\beta_3} 2  \right)\log 2 
\\
&-\sum_{j=1}^3\Biggl(  \zeta_H'(-1,-\beta_j)+ \zeta_H'(-1,1+\beta_j)   
\\ 
&\qquad \qquad\qquad -\zeta_H'\left(-1,\beta_j-\frac{|\pmb \beta|}2\right) -\zeta_H'\left(-1,1+\frac{|\pmb \beta|}2-\beta_j\right) \Biggr)
\\
&\qquad \qquad\qquad+\zeta_H'\left(-1,-\frac{|\pmb\beta|}2\right) + \zeta_H'\left(-1,2+\frac{|\pmb\beta|}2\right)  -2\zeta_R'(-1).
\end{aligned}
\end{equation}
Here $\zeta_H(s,\nu)$ is the Hurwitz  and $\zeta_R(s)$ is the Riemann zeta function. The prime in  $\zeta'_H$ and $\zeta_R'$ stands for the derivative with respect to $s$. 
\end{theorem}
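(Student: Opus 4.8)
The plan is to derive the explicit formula~\eqref{ExplLA} by showing that both sides satisfy the same system of first-order differential equations in the variables $\beta_1,\beta_2,\beta_3$ (with the constraints $\beta_j\in(-1,0)$, $\beta_j-|\pmb\beta|/2>0$), and then matching a single boundary value to pin down the constant of integration. The governing equations are exactly the ones announced for Subsection~\ref{SecGE}: differentiating the Liouville action with respect to a parameter of the family of constant-curvature metrics, one finds (as in the classical computation of Zamolodchikov--Zamolodchikov, adapted to fixed area rather than fixed curvature) that $\partial_{\beta_j}\bigl(\tfrac{1}{4\pi}\mathcal S_{\pmb\beta}[\phi]\bigr)$ is expressed through the regularized conformal factor $\phi_j$ at the $j$-th singularity, plus contributions from the $\phi_\infty$ term and from the change in area normalization. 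Concretely I expect a relation of the shape
\begin{equation}\label{gov}
\frac{\partial}{\partial\beta_j}\left(\frac{1}{4\pi}\mathcal S_{\pmb\beta}[\phi]\right)=\phi_j+\text{(explicit elementary terms in }\beta_j,|\pmb\beta|\text{)},
\end{equation}
where the explicit terms come from the variation of the model-metric normalization and of the $\log S$-type terms. The key input making this usable is that $\phi_j$ has a closed form in terms of the orders of the singularities via the Schwarz triangle function, namely the formulas~\eqref{phi123} and~\eqref{Phi} from Appendix~\ref{NP}; substituting these produces the right-hand side of~\eqref{gov} entirely in terms of log-Gamma (equivalently, $\psi$-function) expressions in $\beta_j$ and $|\pmb\beta|/2$.

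The second step is purely a matter of antidifferentiation. Using $\partial_s\zeta_H(s,\nu)\big|_{s=0}=\log\Gamma(\nu)-\tfrac12\log(2\pi)$ and the standard Lerch-type identity
\[
\frac{d}{d\nu}\,\zeta_H'(-1,\nu)=-\zeta_H'(0,\nu)+\text{const}=-\log\Gamma(\nu)+\tfrac12\log(2\pi),
\]
one recognizes that each $\log\Gamma$ term appearing in $\phi_j$ (and hence in~\eqref{gov}) is precisely the $\beta_j$-derivative of a $\zeta_H'(-1,\cdot)$ term evaluated at the arguments $-\beta_j$, $1+\beta_j$, $\beta_j-|\pmb\beta|/2$, $1+|\pmb\beta|/2-\beta_j$ that occur on the right of~\eqref{ExplLA}; the chain rule supplies the correct signs and the $\pm\tfrac12$ shifts in the argument. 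The $\zeta_H'(-1,-|\pmb\beta|/2)+\zeta_H'(-1,2+|\pmb\beta|/2)$ and $-2\zeta_R'(-1)$ terms arise from the $\phi_\infty$-contribution and the area-normalization (background-disk) terms, which depend on $\beta_j$ only through $|\pmb\beta|$; their $\beta_j$-derivatives must be checked to land on the corresponding $\log\Gamma(|\pmb\beta|/2+\cdots)$ pieces of~\eqref{gov}. The elementary $\log 2$ and $(|\pmb\beta|+2)(2+\log\pi)$ terms are antiderivatives of the elementary part of~\eqref{gov}; note the asymmetric sign pattern $+\beta_1^2/2-\beta_2^2/2+\beta_3^2/2$ reflects the special role of $p_2=0$ in the normalization $p_1=-1,p_2=0,p_3=1$, so this combinatorial bookkeeping needs care.

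The third step fixes the integration constant, which a priori could be any function of $|\pmb\beta|$ left invariant by the flow; but since the three governing equations~\eqref{gov} (one per $\beta_j$) together determine $\mathcal S_{\pmb\beta}[\phi]$ up to a genuine numerical constant once restricted to a slice of fixed $|\pmb\beta|$, and the dependence on $|\pmb\beta|$ is itself governed by a fourth equation (differentiating along $\beta_1=\beta_2=\beta_3$, or equivalently tracking the $\log S$ rescaling), we need just one reference value. The natural choice is a degenerate or symmetric limit where $\mathcal S_{\pmb\beta}[\phi]$ is independently computable --- for instance the round-sphere limit $\beta_j\to 0^-$, where the metric becomes the standard spherical metric and the Liouville action is known, or the spindle limit where two $\beta_j$ coincide and one tends to $0$; in either case $\mathcal S_{\pmb\beta}[\phi]$ reduces to an expression matching the limit of the right-hand side of~\eqref{ExplLA} (using $\zeta_H'(-1,1)=\zeta_R'(-1)$ and $\zeta_H'(-1,2)=\zeta_R'(-1)-\tfrac12\log(2\pi)$, etc.). Matching there forces the constant to vanish.

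\textbf{Main obstacle.} The genuinely hard part is Step 1 --- establishing the governing equations~\eqref{gov} with the correct explicit elementary remainder. This requires a careful variational computation of the Liouville action under deformation of the orders $\beta_j$, controlling the (individually divergent) pieces near each conical point and near infinity and showing the divergences cancel, and then correctly accounting for the fixed-area constraint, which differs from the fixed-curvature setup of~\cite{Z-Z} and introduces the extra elementary terms. Everything afterward is bookkeeping with known special-function identities: routine, but error-prone because of the many shifted arguments and the asymmetry forced by the specific normalization of the marked points.
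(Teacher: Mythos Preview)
Your overall strategy matches the paper's: derive governing equations of the form $\partial_{\beta_j}\mathcal S_{\pmb\beta}[\phi]=4\pi\phi_j+\text{(elementary)}$, plug in the explicit $\phi_j$ from~\eqref{phi123},~\eqref{Phi}, antidifferentiate the resulting $\log\Gamma$ terms into $\zeta_H'(-1,\cdot)$, and fix a single constant. Two points of divergence are worth noting.

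First, the ``elementary remainder'' you leave unspecified is simply $-2\pi$: the paper's Lemma~\ref{ZZ} gives $\partial_{\beta_j}\mathcal S_{\pmb\beta}[\phi]=4\pi\phi_j-2\pi$, the extra $-2\pi$ coming from $\partial_{\beta_j}K$ under the fixed-area (rather than fixed-curvature) normalization. There is no separate $\phi_\infty$-contribution to track; the $\zeta_H'(-1,-|\pmb\beta|/2)+\zeta_H'(-1,2+|\pmb\beta|/2)$ terms arise from antidifferentiating the first line of $\Phi$ in~\eqref{Phi}, not from $\phi_\infty$.

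Second, your choice of reference point is workable but considerably less clean than the paper's. The paper fixes the constant using the \emph{flat} case $|\pmb\beta|=-2$ (Remark~\ref{FLATLA}): there the coefficient $|\pmb\beta|+2$ in front of the integral in~\eqref{LA1} vanishes, so $\mathcal S_{\pmb\beta}[\phi]$ reduces to the finite sum $2\pi\sum\beta_j\phi_j+4\pi\phi_\infty$, which the explicit flat-metric coefficients~\eqref{Potflat} collapse to $4\pi\beta_1\beta_3\log 2$. Your proposed round-sphere or spindle limits would instead require evaluating $\int_{\Bbb C}\phi e^{2\phi}$ directly for a specific curved metric in the $p_1=-1,p_2=0,p_3=1$ normalization, which is doable but messier --- and indeed the paper only computes those limits \emph{a posteriori} as consistency checks (Subsections~\ref{SubSpindle},~\ref{StRoundSph}). (Minor slip: your identity $\zeta_H'(-1,2)=\zeta_R'(-1)-\tfrac12\log(2\pi)$ is wrong; $\zeta_H(s,2)=\zeta_R(s)-1$ gives $\zeta_H'(-1,2)=\zeta_R'(-1)$.)
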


Now all the terms in the right hand side of the anomaly formula~\eqref{DetDelta1}  for the determinant $\det\Delta_{\pmb\beta}$ are explicitly expressed as functions of  the orders $\beta_j$ of conical singularities.  Let us stress, that in contrast to the determinant of Laplacian, the coefficients $\phi_j$ and the values of functionals $\mathcal S_{\pmb\beta}[\phi]$ and $\mathcal H_{\pmb\beta}[\phi]$ do depend on the normalization of marked points $p_j$; see Remark~\ref{GLA}.

As usual, the determinant of the Friedrichs Laplacian  $\Delta^S_{\pmb\beta}=\frac 1 {S} \Delta_{\pmb\beta}$, corresponding to the area $S$ metric $S\cdot e^{2\phi}|dz|^2$ (of Gaussian curvature $K=2\pi(|\pmb\beta|+2)/S$), is related to $\det\Delta_{\pmb\beta}$  by the standard  rescaling property 
\begin{equation}\label{rescaling1}
\log\det\Delta^S_{\pmb\beta}=\log\det\Delta_{\pmb\beta}-\zeta_{\pmb\beta}(0)\log S.
\end{equation}
Moreover, for the value of the spectral zeta function at zero we have 
 \begin{equation*}\label{Zeta_01}
\zeta_{\pmb\beta}(0)=\frac {2+|\pmb\beta|}{6}-\frac 1 {12}\sum_{j=1}^3\left(\beta_j+1-\frac 1 {\beta_j+1}\right)-1;
\end{equation*}
 a more detailed discussion can be found in Remark~\ref{Rescaling}. This together with the results above 
  allows one to write out an explicit closed formula for the determinant $\det\Delta^S_{\pmb\beta}$ in terms of the surface area $S$ and the orders $\beta_j$ of conical singularities. This is one of  the main results of this paper. 
  We formulate it as the following corollary of Theorem~\ref{THMdet} and Theorem~\ref{LAexplInt}:

\begin{corollary}[Explicit closed formula for the determinant]\label{COR}  Assume that $\beta_j\in(-1,0)$ and $\beta_j-\frac{|\pmb\beta|}2>0$ for  $j=1,2,3$. 
Let  $\Delta^S_{\pmb\beta}$ stand for the Friedrichs Laplacian  on the Riemann sphere $\overline{\Bbb C}$ equipped with (unique)  area $S$ constant  curvature   metric $S\cdot e^{2\phi}|dz|^2$ representing the divisor~\eqref{DIVISOR}.
Then for the zeta regularized spectral determinant $\det \Delta^S_{\pmb\beta}$ we have the  explicit closed formula 
\begin{equation}\label{DDAS}
\begin{aligned}
\log  {\det\Delta^S_{\pmb\beta}}=&- \frac {|\pmb\beta|+1} 6-\frac 1 {12\pi}\Bigl(\mathcal S_{\pmb\beta}[\phi]-\pi \log \mathcal H_{\pmb\beta}[\phi]\Bigr)
-\sum_{j=1}^{3}\mathcal C(\beta_j)
\\
&-\left(\frac {2+|\pmb\beta|}{6}-\frac 1 {12}\sum_{j=1}^3\left(\beta_j+1-\frac 1 {\beta_j+1}\right)-1\right)\log S
\\
& -\frac 4 3 \log 2 -4\zeta_R'(-1) -\log\pi.
 \end{aligned}
 \end{equation}
Here for the Liouville action $ \mathcal S_{\pmb\beta}[\phi]$ we have the explicit expression~\eqref{ExplLA},  the functional $\mathcal H_{\pmb\beta}[\phi]$ is explicitly defined via~\eqref{FH} with the coefficients $\phi_j$ found in~\eqref{phi123},~\eqref{Phi}. Finally, $\mathcal C(\beta_j)$ is explicitly given in~\eqref{Cb}. 
\end{corollary}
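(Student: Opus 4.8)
The plan is to deduce~\eqref{DDAS} by combining Theorem~\ref{THMdet} and Theorem~\ref{LAexplInt} with the standard rescaling behaviour of zeta-regularized determinants, so that the argument is essentially bookkeeping once those two theorems are in hand. First I would start from the anomaly formula~\eqref{DetDelta1}, which already expresses $\log\det\Delta_{\pmb\beta}$ for the unit-area metric $e^{2\phi}|dz|^2$ in terms of the Liouville action $\mathcal S_{\pmb\beta}[\phi]$, the functional $\mathcal H_{\pmb\beta}[\phi]$, the functions $\mathcal C(\beta_j)$, and a fixed constant. By Theorem~\ref{LAexplInt} the term $\mathcal S_{\pmb\beta}[\phi]$ equals the closed expression~\eqref{ExplLA}, and by~\eqref{phi123}--\eqref{Phi} the coefficients $\phi_j$ entering $\mathcal H_{\pmb\beta}[\phi]$ are explicit functions of $\beta_1,\beta_2,\beta_3$; hence~\eqref{DetDelta1} is already a closed formula for the unit-area determinant.

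Next I would pass from area $1$ to area $S$. Since $\Delta^S_{\pmb\beta}=\tfrac1S\Delta_{\pmb\beta}$ has eigenvalues $\lambda_j/S$, the spectral zeta functions satisfy $\zeta^S_{\pmb\beta}(s)=S^s\zeta_{\pmb\beta}(s)$, and differentiating at $s=0$ gives the rescaling identity~\eqref{rescaling1}, i.e. $\log\det\Delta^S_{\pmb\beta}=\log\det\Delta_{\pmb\beta}-\zeta_{\pmb\beta}(0)\log S$. It remains to insert the value of $\zeta_{\pmb\beta}(0)$. This is the only input that is not purely formal: one uses the small-time heat trace expansion $\Tr e^{-t\Delta_{\pmb\beta}}=\tfrac1{4\pi t}+a_1+O(\sqrt t)$ together with $\zeta_{\pmb\beta}(0)=a_1-\dim\ker\Delta_{\pmb\beta}=a_1-1$, where $a_1$ splits as the Gauss--Bonnet contribution $\tfrac1{12\pi}\int_{\overline{\Bbb C}}K\,\dvol=\tfrac{|\pmb\beta|+2}{6}$ of the smooth part plus the three Cheeger conical contributions $\tfrac1{12}\bigl(\tfrac1{\beta_j+1}-(\beta_j+1)\bigr)$ coming from the cones of angle $2\pi(\beta_j+1)$; assembling these reproduces exactly the expression for $\zeta_{\pmb\beta}(0)$ displayed just before the corollary. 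Alternatively this value may simply be quoted from~\cite{KalvinJFA} via Remark~\ref{Rescaling}.

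Finally, substituting this $\zeta_{\pmb\beta}(0)$ into~\eqref{rescaling1} produces the $-\bigl(\tfrac{2+|\pmb\beta|}{6}-\tfrac1{12}\sum_j(\beta_j+1-\tfrac1{\beta_j+1})-1\bigr)\log S$ line in~\eqref{DDAS}, while all the remaining, $S$-independent terms are copied verbatim from~\eqref{DetDelta1}; together with the pointers to~\eqref{ExplLA},~\eqref{FH},~\eqref{phi123}--\eqref{Phi} and~\eqref{Cb} this makes the right-hand side of~\eqref{DDAS} a fully explicit function of $S$ and $\beta_1,\beta_2,\beta_3$, which is exactly the statement of Corollary~\ref{COR}. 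There is no substantive obstacle in this step beyond the careful accounting of $\zeta_{\pmb\beta}(0)$ — in particular remembering to subtract the one-dimensional kernel and to use the correct normalization of the conical heat coefficients — and this is precisely the content recorded in Remark~\ref{Rescaling}.
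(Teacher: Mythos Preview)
Your proposal is correct and follows essentially the same approach as the paper: the paper's proof is the one-line remark that the corollary is an immediate consequence of Theorem~\ref{THMdet}, Theorem~\ref{LAexplInt}, Proposition~\ref{METRIC}, and the rescaling property in Remark~\ref{Rescaling}, which is exactly what you unpack. Your added derivation of $\zeta_{\pmb\beta}(0)$ from the heat-trace constant term is a nice elaboration but not required, since the paper simply quotes this value from Remark~\ref{Rescaling} (where it is in turn cited from~\cite{KalvinJFA}).
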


As we show in Subsection~\ref{FLAT_METRICS},  in the case $|\pmb\beta|=-2$ (i.e. when the metric $S\cdot e^{2\phi}|dz|$ is flat) the equality~\eqref{DDAS} significantly simplifies and generalizes our recent results in~\cite{KalvinJGA}.  
Moreover, in the limit cases of a spindle (i.e. when $\beta_j\to 0^-$ while $\beta_k=\beta_\ell$ for $k$ and $\ell$ different from $j$) and of a standard round sphere (i.e. when $\beta_j\to 0^-$ for $j=1,2,3$)  the formula~\eqref{DDAS}  returns the corresponding results known from~\cite{KalvinJFA,Klevtsov,SpreaficoZerbini}; for details we refer to Subsections~\ref{SubSpindle} and~\ref{StRoundSph}.

Our last theorem contains  results on stationary points  of the determinant.   
 \begin{theorem}[Stationary points of  determinant]\label{TSP} 
  Assume that $\beta_j\in(-1,0)$ and $\beta_j-\frac{|\pmb\beta|}2>0$ for  $j=1,2,3$. Then, on the constant curvature metrics  representing the divisors~\eqref{DIVISOR} of fixed degree $|\pmb\beta|=\beta_1+\beta_2+\beta_3$,
 the  point   $(\beta_1,\beta_2,\beta_3)$ with  
 \begin{equation}\label{SPI}
 \beta_1=\beta_2=\beta_3=\frac {|\pmb\beta|} 3
 \end{equation}
  is a stationary point of the function 
\begin{equation}\label{DasFA}
(\beta_1,\beta_2, \beta_3)\mapsto \log\det\Delta_{\pmb\beta}^S.
\end{equation}
Moreover, if the surface area $S$ is sufficiently small, then the stationary point is a minimum. More precisely: for each $|\pmb\beta|\in(-3,0)$ there exists a number $S_0=S_0(|\pmb\beta|)$ such that for any $S\in(0,S_0]$ 
 the stationary point~\eqref{SPI} is a minimum of the function~\eqref{DasFA}. 
 
\end{theorem}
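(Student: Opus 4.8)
The plan is to reduce the stationarity claim to a symmetry argument and the minimality claim to a Hessian computation that becomes tractable in the small-area limit. First I would exploit the $S_3$-symmetry of the problem: the map $(\beta_1,\beta_2,\beta_3)\mapsto\log\det\Delta_{\pmb\beta}^S$ is invariant under permutations of the $\beta_j$ (permuting the conical points $p_j$ is realized by a M\"obius transformation, under which the determinant is invariant, while $|\pmb\beta|$ and $S$ are fixed). On the constraint surface $\beta_1+\beta_2+\beta_3=|\pmb\beta|$, the diagonal point $\beta_1=\beta_2=\beta_3=|\pmb\beta|/3$ is the unique fixed point of this $S_3$-action; hence the gradient of the (constrained) function there must be $S_3$-invariant, and the only $S_3$-invariant vector in the tangent plane $\{\sum\dot\beta_j=0\}$ is zero. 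This gives stationarity for free, for every $|\pmb\beta|\in(-3,0)$ and every $S>0$, with no computation at all — though one can also verify it directly by differentiating the explicit formula in Corollary~\ref{COR}, which is what I expect the paper does as a sanity check and as preparation for the Hessian.

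Next I would compute the Hessian of the constrained function at the symmetric point. Using the explicit closed formula~\eqref{DDAS}, write $\log\det\Delta_{\pmb\beta}^S$ as a sum of terms: the Liouville-action term $-\frac1{12\pi}\mathcal S_{\pmb\beta}[\phi]$ given explicitly by~\eqref{ExplLA} in terms of $\zeta_H'(-1,\cdot)$, the $\mathcal H_{\pmb\beta}$-term $\frac1{12}\log\mathcal H_{\pmb\beta}[\phi]$ given by~\eqref{FH} with $\phi_j$ from~\eqref{phi123}–\eqref{Phi}, the $\sum\mathcal C(\beta_j)$ term which is a sum of single-variable functions, and the $\log S$ term whose coefficient is $-\zeta_{\pmb\beta}(0)$. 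I would restrict to the constraint $\sum\beta_j=|\pmb\beta|$ and parametrize, say, by $(\beta_1,\beta_2)$ with $\beta_3=|\pmb\beta|-\beta_1-\beta_2$; by $S_3$-symmetry the $2\times2$ Hessian at the symmetric point has the form $a\,I+b\,J$ with $J$ the all-ones matrix, so its eigenvalues are $a$ and $a+2b$, and positive-definiteness reduces to checking $a>0$ and $a+2b>0$ — equivalently, to checking the sign of the second derivative of $t\mapsto\log\det\Delta^S$ along one symmetric direction and along the fully symmetric (but constraint-violating, hence irrelevant) direction. Concretely, only one scalar quantity, the "transverse" second derivative $\frac{d^2}{dt^2}\big|_{t=0}\log\det\Delta_{\pmb\beta(t)}^S$ along a path like $\pmb\beta(t)=(|\pmb\beta|/3+t,\,|\pmb\beta|/3+t,\,|\pmb\beta|/3-2t)$, needs its sign determined.

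Then comes the small-area reduction. In the coefficient of $\log S$, namely $\zeta_{\pmb\beta}(0)=\frac{2+|\pmb\beta|}{6}-\frac1{12}\sum_j(\beta_j+1-\frac1{\beta_j+1})-1$, the only $\beta_j$-dependence is through the separable sum $\sum_j g(\beta_j)$ with $g(\beta)=\beta+1-\frac1{\beta+1}$, and $g''(\beta)=-\frac{2}{(\beta+1)^3}<0$ for $\beta\in(-1,0)$. Hence $-\zeta_{\pmb\beta}(0)\log S$ contributes $+\frac1{12}\log S\cdot\sum_j g(\beta_j)$, whose transverse second derivative at the symmetric point equals $\frac1{12}\log S$ times a strictly positive multiple of $g''(|\pmb\beta|/3)$, i.e. it is $\frac{c(|\pmb\beta|)}{12}\,(-\log S)$ with $c(|\pmb\beta|)>0$ — a term that blows up to $+\infty$ as $S\to0^+$. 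All the other terms in~\eqref{DDAS} are independent of $S$, so their contribution to the transverse Hessian is a fixed finite number $m(|\pmb\beta|)$ depending only on $|\pmb\beta|$. Therefore the transverse second derivative equals $m(|\pmb\beta|)+\frac{c(|\pmb\beta|)}{12}(-\log S)$, which is strictly positive once $-\log S$ is large enough, i.e. once $S\le S_0(|\pmb\beta|):=\exp\!\big(12\,m(|\pmb\beta|)/c(|\pmb\beta|)\big)$ (and $S_0\ge 1$, say, if $m\ge0$; in general one takes $S_0=\min(1,\dots)$ or just the bound from the inequality). One repeats the same accounting for the other eigenvalue $a+2b$ — but along the fully symmetric direction the sum $\sum_j g(\beta_j)$ again has negative-definite second derivative so the $-\log S$ term is again favorable, and one concludes both eigenvalues are positive for $S$ small. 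I would also note that stationarity already forces the first-order terms to vanish, so the Taylor expansion is genuinely governed by this Hessian, giving a strict local minimum.

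The main obstacle will be controlling the $S$-independent part $m(|\pmb\beta|)$: one must show it is finite (clear, since $\zeta_H'(-1,\cdot)$, $\mathcal C$, and $\phi_j$ are real-analytic in the relevant ranges — here I would invoke the regularity statements and the explicit formulas~\eqref{phi123}, \eqref{Phi} from Appendix~\ref{NP}) and, ideally, estimate or at least bound it so that $S_0(|\pmb\beta|)$ is effective; the genuinely delicate piece is differentiating $\log\mathcal H_{\pmb\beta}[\phi]$, since that requires the explicit $\beta$-dependence of the $\phi_j$ through the Schwarz triangle function and ratios of Gamma functions, whose second derivatives at the symmetric point involve polygamma values $\psi'$ and $\psi''$ and must be combined with the $\zeta_H'(-1,\cdot)$ derivatives (these bring in $\zeta_H'(-1,\cdot)$ differentiated twice in $\nu$, i.e. $\log\Gamma$-type and $\psi$-type expressions via $\partial_\nu\zeta_H(s,\nu)=-s\,\zeta_H(s+1,\nu)$). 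Keeping track of all these contributions and verifying no miraculous cancellation makes $m(|\pmb\beta|)$ itself sign-indefinite in a way that would break the argument for all small $S$ is the computational heart of the proof; everything else is the soft symmetry/scaling skeleton sketched above.
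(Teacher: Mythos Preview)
Your proposal is correct and the key mechanism for the minimum claim --- that the $-\zeta_{\pmb\beta}(0)\log S$ term contributes to the constrained Hessian a quantity proportional to $g''(|\pmb\beta|/3)\log S$ with $g''(\beta)=-2(\beta+1)^{-3}<0$, hence blowing up to $+\infty$ as $S\to0^+$ and dominating the finite $S$-independent remainder --- is exactly the one the paper uses (Proposition~\ref{SecDer} and Corollary~\ref{CSD}). Where you differ is in the stationarity step: the paper does it by direct computation, writing out $\partial_{\beta_\ell}\log\det\Delta_{\pmb\beta}$ via the anomaly formula~\eqref{DetDelta} and the governing equations~\eqref{GovEqn} and checking the resulting identity at the symmetric point (Proposition~\ref{StatPoints}); your $S_3$-symmetry argument is cleaner and computation-free, and it buys you a further simplification you do not fully exploit: since the tangent space to the constraint at the symmetric point is the standard two-dimensional irreducible representation of $S_3$, Schur's lemma forces the restricted Hessian to be a scalar multiple of the identity, so genuinely only \emph{one} sign needs checking.

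Two small confusions to clean up. First, in the $(\beta_1,\beta_2)$ parametrization the eigenvectors $(1,-1)$ and $(1,1)$ correspond to the 3D directions $(1,-1,0)$ and $(1,1,-2)$, \emph{both} of which lie in the constraint plane $\sum\dot\beta_j=0$; neither is the ``fully symmetric, constraint-violating'' direction $(1,1,1)$ you mention, so that parenthetical should be dropped. Second, your closing worry about a ``miraculous cancellation'' in $m(|\pmb\beta|)$ is unnecessary: the argument requires only that $m(|\pmb\beta|)$ be finite, which is immediate from the real-analyticity of $\zeta_H'(-1,\cdot)$, $\mathcal C$, and the $\phi_j$ on the relevant domain --- no further estimation is needed and the paper does none.
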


Note that in~\cite{KalvinJGA} it is  demonstrated that the stationary point of the determinant on the flat isosceles triangle envelopes  (i.e. in the case $\beta_1=\beta_3=:\beta$ and $\beta_2=-2-2\beta$)   turns from a minimum to a maximum as the area $S$ increases. A similar effect also appears on the constant curvature metrics with two conical singularities on the $2$-sphere~\cite[Sec. 3.1]{KalvinJFA}. Based on the explicit formula for the determinant (Corollary~\ref{COR}) and its derivatives (see Proposition~\ref{SecDer}),  it is also  possible to clarify  (at least numerically) what happens with the minimum of  $\det\Delta_{\pmb\beta}^S$ when the area $S$ increases, however this goes out of the scope of this paper.

\section{Determinant of Laplacian}\label{DetLap}
In this section we prove Theorem~\ref{THMdet}. The proof relies on Proposition~\ref{PropDet} below.

\begin{proposition} \label{PropDet}
 For the determinant of the Friedrichs Laplacian  $\Delta_{\pmb\beta}$ on the Riemann sphere equipped with a  \underline{unit area}   constant curvature  metric $e^{2\phi}|dz|^2$ representing a divisor
$\pmb \beta=\sum_{j=1}^3 \beta_j\cdot p_j$ with three distinct points  $p_j\in \Bbb C$ and $\beta_j> -1$ we have 
\begin{equation}\label{DetDelta}
\begin{aligned}
\log
 {\det\Delta_{\pmb\beta}}
 = -\frac{|\pmb\beta|+2}{6}\int_{\Bbb C} \phi e^{2\phi}\,\frac {dz\wedge d\bar z} {-2i}   -\frac 1 3 \phi_\infty
  +\frac  1 6 \sum_{j=1}^{3}\frac {\beta_j}{\beta_j+1}\phi_j-\sum_{j=1}^{3}\mathcal C(\beta_j)
\\
 -\frac 4 3 \log 2 -4\zeta_R'(-1)+\frac 1 {6} -\log\pi.
\end{aligned}
\end{equation}
Here 
 $|\pmb\beta|=\beta_1+\beta_2+\beta_3$ is the degree of the divisor $\pmb\beta$, 
 $\phi_j$ and  $\phi_\infty$ are the coefficients in the asymptotics~\eqref{ASphi} of the metric potential $\phi$,  and $\mathcal C(\beta)$ is the function defined  in~\eqref{Cb}.
\end{proposition}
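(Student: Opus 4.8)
The plan is to prove Proposition~\ref{PropDet} by a cut-and-glue argument that reduces the spectral determinant on the singular sphere $(\overline{\Bbb C},e^{2\phi}|dz|^2)$ to three model cone computations plus a smooth background term, and then to track carefully the conformal anomaly as one passes from the metric $e^{2\phi}|dz|^2$ to the flat Euclidean metric on a large disk. Concretely, I would first fix a large radius $R$ and split $\overline{\Bbb C}$ into the disk $D_R=\{|z|\le R\}$ (containing the three conical points $p_j$) and the complementary disk around $\infty$, and then use the BFK gluing formula from~\cite{KalvinJFA} to write $\log\det\Delta_{\pmb\beta}$ as a sum of $\log\det$ of Dirichlet Laplacians on the two pieces plus the $\log\det$ of a Dirichlet-to-Neumann operator on the circle $|z|=R$, together with the correction terms coming from the zero mode (since $\Delta_{\pmb\beta}$ is the modified determinant). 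The piece near $\infty$ carries the asymptotics $\phi=-2\log|z|+\phi_\infty+o(1)$, so as $R\to\infty$ its contribution is explicitly computable (a round cap, essentially), and the Dirichlet-to-Neumann determinant on $|z|=R$ is also explicit to the needed order in $R$; the whole $R$-dependence must cancel, leaving the $-\tfrac13\phi_\infty$ term.

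The heart of the matter is the disk piece $D_R$ with the three cones inside. Here I would apply the Polyakov-Alvarez type anomaly formula for surfaces with conical singularities, again in the form proven in~\cite{KalvinJFA}, to compare $\det$ of the Dirichlet Laplacian for the metric $e^{2\phi}|dz|^2$ on $D_R$ with $\det$ of the Dirichlet Laplacian for the flat metric $|dz|^2$ on $D_R$. The anomaly integrand is the usual $\frac{1}{12\pi}\int(|\nabla\phi|^2+\dots)$ bulk term, which after integrating by parts using the Liouville equation~\eqref{LEq1} and the logarithmic asymptotics~\eqref{ASphi} produces the $-\frac{|\pmb\beta|+2}{6}\int\phi e^{2\phi}$ bulk term and the boundary/corner contributions $\frac{1}{6}\sum\frac{\beta_j}{\beta_j+1}\phi_j$ at the conical points; the remaining cone-local pieces of the anomaly, which depend only on the opening angles, are exactly packaged into the functions $\mathcal C(\beta_j)$ — this is precisely what~\cite{KalvinJFA} establishes about $\mathcal C$ (see Remark~\ref{RCb}). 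The flat disk determinant $\det\Delta_{D_R}^{\mathrm{flat}}$ is classical and contributes, via $\zeta_R'(-1)$, $\log 2$, $\log\pi$, and $R$-dependent terms that must cancel against the $\infty$-cap and the DtN operator. Collecting all pieces and sending $R\to\infty$ yields~\eqref{DetDelta}.

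The main obstacle, I expect, is bookkeeping the $R\to\infty$ asymptotics consistently across the three contributions (disk piece, cap piece, DtN determinant) so that every power of $R$ and every $\log R$ cancels and the finite remainder is exactly the stated constant $-\tfrac43\log 2-4\zeta_R'(-1)+\tfrac16-\log\pi$ together with $-\tfrac13\phi_\infty$. In particular one must be careful about: (i) the zero-mode corrections, since $\det\Delta_{\pmb\beta}$ is the modified determinant while the Dirichlet determinants on $D_R$ and the cap have trivial kernel, so BFK introduces explicit $\log$(area)-type and $\log R$-type terms; (ii) the precise normalization of the conformal factor near $\infty$ — one has to match the local model $-2\log|z|+\phi_\infty$ to a genuine smooth spherical cap and verify the anomaly contribution there reduces to $-\tfrac13\phi_\infty$ plus $R$-dependent terms with no leftover constant; and (iii) verifying that the boundary terms in the Polyakov-Alvarez formula on the artificial circle $|z|=R$ are correctly split between the two sides of the cut. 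Once~\cite{KalvinJFA} is invoked for the cone-local anomaly (the definition of $\mathcal C$) and for the BFK formula in the singular setting, these are all finite explicit computations, but their consistency is the nontrivial content of the proof.
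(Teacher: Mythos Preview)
Your proposal is correct and follows essentially the same route as the paper: BFK decomposition along $|z|=R$, the generalized Polyakov--Alvarez formula from~\cite{KalvinJFA} on the singular disk (producing the bulk integral, the $\tfrac{\beta_j}{\beta_j+1}\phi_j$ terms, and the $\mathcal C(\beta_j)$), the classical anomaly on the smooth cap near $\infty$ (yielding $-\tfrac13\phi_\infty$), Weisberger's flat disk determinant, and the limit $R\to\infty$. One simplification the paper exploits that you may find useful: rather than expanding the Dirichlet-to-Neumann determinant asymptotically in $R$, it observes that $\det(\mathcal N)/\ell$ is conformally invariant and evaluates it exactly (equal to $\tfrac12$) by comparison with the standard round sphere cut along its equator, which cleans up the bookkeeping considerably.
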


The proof of Proposition~\ref{PropDet} is preceded by Remark~\ref{RCb} and Remark~\ref{Rescaling}.   
 
\begin{remark} \label{RCb}
 The real analytic function $(-1,\infty)\ni\beta\mapsto \mathcal C(\beta)$ in~\eqref{Cb},~\eqref{DetDelta} describes the inputs into the determinant that come solely from the orders $\beta_j$ of conical singularities (the corresponding cone angles are $2\pi(\beta_j+1)$)~\cite{KalvinJFA}.
 
  The Barnes double zeta function $\zeta_B$  in~\eqref{Cb} is first defined by the double series
$$
\zeta_B(s;a,b,x)=\sum_{m,n=0}^\infty(am+bn+x)^{-s},\quad \Re s>2,a>0,b>0,x>0, 
$$
and then extended by analyticity to $s=0$, see e.g.~\cite{Matsumoto,Spreafico2}. In this paper we assume that $\beta<0$ and only  consider some limits as $\beta_j\to 0^-$ in Section~\ref{FSL}. In general, for a regular point  (i.e. if  there is no conical singularity at the point, or, equivalently, the cone angle is $2\pi$) we have $\beta=0$ and $\zeta'_B(0;\beta+1,1,1)=\zeta'_R(-1)$. Hence $\mathcal C(0)=0$ by~\eqref{Cb}.   
 
 For the rational values of $\beta$ the  function $\beta\mapsto \zeta'_B(0;\beta+1,1,1)$  in~\eqref{Cb} can be expressed in terms of the Riemann zeta and gamma functions. Namely, for any coprime natural numbers $p$ and $q$ we have
 \begin{equation}\label{BZ}
 \begin{aligned}
\zeta'_B(0;p/q, 1, 1 ) =  &\frac 1 {pq}\zeta_R'(-1)-\frac{1}{12pq}\log q  +\left(\frac 1 4   +  S(q,p)\right)\log\frac q  p 
\\
+\sum_{k=1}^{p-1}\left(\frac 1 2 -\frac k p \right)&\log \Gamma\left(  \left(\!\!\!\left(\frac{kq}{p}\right)\!\!\!\right)+\frac 1 2 \right)+\sum_{j=1}^{q-1}\left(\frac 1 2 -\frac j q \right)\log \Gamma\left(\left(\!\!\!\left(\frac{jp}{q}\right)\!\!\!\right)+\frac 1 2\right).
\end{aligned}
\end{equation}
Here $S(q,p)=\sum_{j=1}^{p}\left(\!\!\left(\frac{j}{p}\right)\!\!\right) \left(\!\!\left(\frac{jq}{p}\right)\!\!\right)$ is the Dedekind sum,  and   the symbol $(\!(\cdot)\!)$ is defined so that   $(\!(x)\!)=x-\lfloor x\rfloor-1/2$ for $x$ not an integer and $(\!(x)\!)=0$ for $x$ an integer (by $\lfloor x\rfloor$ we mean the floor of $x$: the largest integer not exceeding $x$).

Let us also note that in the case $p=1$ the equality~\eqref{BZ} simplifies to
$$
\zeta_B'\left(0; 1 /q,1,1\right) =  \frac 1 q \zeta'_R(-1)  -    \frac 1 {12q}\log q - \sum_{j=1}^{q-1}\frac j q    \log\Gamma\left(\frac j q\right)+\frac{q-1}4 \log 2\pi.
$$
For details we refer to~\cite[Appendix A]{KalvinJFA}.
 \end{remark}
 
\begin{remark}[Rescaling property]\label{Rescaling} Consider a unit area metric $m_{\pmb\beta}$ representing a divisor $\pmb \beta$. 
 Multiplying the metric $m_{\pmb\beta}$ by $S>0$, one obtains the metric  $m^S_{\pmb\beta}=S\cdot m_{\pmb\beta}$ of area $S$. The metric $m^S_{\pmb\beta}$ represents the same divisor $\pmb\beta$.  Let $\zeta_{\pmb\beta}(s)=\sum_j \lambda_j^{-s}$ stand for the spectral zeta function of the Friedrichs  Laplacian $\Delta_{\pmb\beta}$.  Then $\zeta^S_{\pmb\beta}(s)=\sum_j (\lambda_j/S)^{-s}$ is the spectral zeta function of the Friedrichs Laplacian $\Delta^S_{\pmb\beta}= \frac 1 S\cdot \Delta_{\pmb\beta}$ corresponding to the metric $m^S_{\pmb\beta}$. Differentiating $\zeta^S_{\pmb\beta}(s)$ with respect to $s$ we arrive at the standard rescaling property $\partial_s  \zeta^S_{\pmb\beta}(0)=\partial_s\zeta_{\pmb\beta}(0)+\zeta_{\pmb\beta}(0)\log S$.  Being rewritten in terms of the determinants it gives
\begin{equation}\label{rescaling}
\log\det\Delta^S_{\pmb\beta}=\log\det\Delta_{\pmb\beta}-\zeta_{\pmb\beta}(0)\log S.
\end{equation}
 For a sphere with three conical singularities  we have 
\begin{equation}\label{Zeta_0}
\zeta_{\pmb\beta}(0)=\frac {|\pmb\beta|+2}{6}-\frac 1 {12}\sum_{j=1}^3\left(\beta_j+1-\frac 1 {\beta_j+1}\right)-1;
\end{equation}
see~\cite[Corollary 1.2]{KalvinJFA}. As is well-known,  $a_0=\zeta_{\pmb\beta}(0)+1$ is actually the constant term in the short time asymptotic expansion of the heat trace $\Tr e^{-t\Delta_{\pmb\beta}}$, which allows one to deduce~\eqref{Zeta_0} from an expression for $a_0$, and vice versa. Furthermore, the heat  trace expansion identifies the poles of the spectral zeta function $s\mapsto \zeta_{\pmb\beta}(s)$. 
\end{remark}

\begin{proof}[Proof of Proposition~\ref{PropDet}]  Consider the Riemann sphere  $\overline{\Bbb C}$ equipped with the metric $e^{2\phi}|dz|^2$. We cut the sphere $\overline{\Bbb C}$ along the circle $|z|=R$ into the disk $|z|\leq R$ and the ``disk'' $\{|z|\geq R\}\cup\{\infty\}$.  By abuse of notation, we denote the  latter ``disk'' by $|z|\geq R$. 
Since we will let $R\to+ \infty$, we can assume that conical singularities $p_j\in \Bbb C$ of the metric $e^{2\phi}|dz|^2$ are always in the disk $|z|< R$. 

 The BFK decomposition formula~\cite[Theorem B$^*$]{BFK} gives
\begin{equation}\label{BFKf}
\det\Delta_{\pmb\beta}= \det (\Delta_{\pmb\beta}\!\!\restriction_{|z|\leq R}) \cdot \det (\Delta_{\pmb\beta}\!\!\restriction_{|z|\geq R})\cdot\frac {\det \left(\mathcal N_{\pmb\beta}\!\!\restriction_{|z|=R}\right)}{\ell(|z|= R,e^{2\phi}|dz|^2)}.
\end{equation}
Here $ \Delta_{\pmb\beta}\!\!\restriction_{|z|\leq R}$ and $\Delta_{\pmb\beta}\!\!\restriction_{|z|\geq R}$ are the Friedrichs Dirichlet Laplacians on the corresponding  disks equipped with the metric $e^{2\phi}|dz|^2$, $\ell(|z|= R,e^{2\phi}|dz|^2)$ is the metric length of the circle $|z|= R$, and  $\mathcal N_{\pmb\beta}\!\!\restriction_{|z|=R}$ is the Neumann jump operator on $|z|= R$ (a first-order classical pseudodifferential operator).  Note that for the constant curvature metrics with isolated conical singularities the proof of the BFK decomposition formula requires some minor modifications~\cite[Sec. 2.3]{KalvinJFA}.  

Similarly, for the decomposition of the standard unit sphere $(\overline {\Bbb C}, 4R^2(1+R^2|z|^2)^{-2}| dz |^2)$  along its equator $|z|=R$, the BFK formula reads
$$
\det\Delta= 4\pi \cdot \det (\Delta\!\!\restriction_{|z|\leq R}) \cdot \det (\Delta\!\!\restriction_{|z|\geq R})\cdot\frac {\det \left(\mathcal N\!\!\restriction_{|z|= R}\right)}{\ell(|z|= R, 4R^2(1+R^2|z|^2)^{-2}| dz |^2)}.
$$
Here $4\pi$ is the area of the unit sphere. (Note that  $(\overline {\Bbb C}, 4R^2(1+R^2|z|^2)^{-2}| dz |^2)$ is isometric to $(\overline {\Bbb C}, 4(1+|z|^2)^{-2}| dz |^2)$ via the change of variable $z\mapsto z/R$, and thus also to the standard unit sphere $x^2+y^2+z^2=1$ in $\Bbb R^3$ via the stereographic projection.) This together with the well-known explicit formulae for the the determinant $\det\Delta$ of the Laplacian on a unit sphere~\cite{OPS} and the determinant  $\det (\Delta\!\!\restriction_{|z|\leq R})= \det (\Delta\!\!\restriction_{|z|\geq R})$ of the Dirichlet Laplacian on a unit  hemisphere~\cite{Weisberger} allows one to conclude that
\begin{equation}\label{DNJ}
\frac {\det\left( \mathcal N_{\pmb\beta}\!\!\restriction_{|z|= R}\right)}  {\ell(|z|= R,e^{2\phi}|dz|^2)}=\frac {\det\left( \mathcal N\!\!\restriction_{|z|= R}\right)}{\ell(|z|= R,4R^2(1+R^2|z|^2)^{-2}| dz |^2)}=\frac 1 2.
\end{equation}
Here the first equality is valid due to the conformal invariance of the left hand side. The conformal invariance  
 can be most easily seen from the BFK formula together with Polyakov and Polyakov-Alvarez type formulas for the determinants of Laplacians, cf.~\cite[Proof of Lemma 2.2]{KalvinCCM}, see also~\cite{EW,GG,Went}.

Next we show that the determinant of the Dirichlet Laplacian on the smooth metric ``disk''    $\{|z|\geq R, e^{2\phi}|dz|^2\}$ obeys the asymptotics 
\begin{equation}\label{ASUD}\begin{aligned}
\log\det\left(\Delta_{\pmb\beta}\!\!\restriction_{|z|\geq R}\right)=       \frac 1 {3}\log R -&\frac 1 3 \phi_\infty
-2\zeta_R'(-1)
\\
&-\frac 5 {12} -\frac 1 6\log 2-\frac 1 2 \log \pi+o(1),\quad R\to \infty.
\end{aligned}
\end{equation}

Indeed, in the holomorphic coordinate $w=1/z$  the metric ``disk''  takes the form $\{|w|\leq1/R, e^{2\varphi}|dw|^2\}$ with a smooth (in the small disk $|w|\leq1/R$) metric potential $\varphi$ satisfying 
$$
\varphi(w)=\phi(1/w)-2\log |w| =\phi_\infty+o(1),\   |w|\partial_{ w}\varphi(w)=o(1) , \  |w|\partial_{\overline{ w}} \varphi(w) =o(1)  \text{  as  }  w\to 0;
$$ 
for the last two estimates we refer to~\cite[Lemma 3]{Troyanov}. 

Let us take the flat metric $|dw|^2$ as a reference metric in the disk $|w|\leq1/R$.
Then the  Polyakov-Alvarez formula~\cite{Alvarez,OPS} gives
\begin{equation}\label{UDP}
\begin{aligned}
\log \frac {\det\left(\Delta_{\pmb\beta}\!\!\restriction_{|z|\geq R}\right)}{ \det\left(\Delta_\flat\!\!\restriction_{|w|\leq 1/R}\right)}= -\frac{1}{6\pi}\left (\frac1 2 \int_{|w|\leq 1/R} |\nabla_\flat\varphi|^2\,\frac{dw\wedge d\overline{ w}}{-2i}+\oint_{|w|=1/R}k_\flat\varphi \,|d w|  \,\right )
\\
-\frac 1 {4\pi}\oint_{|w|=1/R}\partial_{n_\flat}\varphi\,|d w|.
\end{aligned}
\end{equation}
Here $\nabla_\flat$ is the gradient, $k_\flat=R$ is the geodesic curvature of the circle $|w|=1/R$,  and $n_\flat$ is the outward unit normal to the disk  $|w|\leq 1/R$ (all with respect  to the flat reference  metric $|dw|^2$). 

Notice that in the right hand side of the Polyakov-Alvarez formula~\eqref{UDP} only the integral involving the geodesic curvature $k_\flat$ gives a nonzero contribution of $-\frac 1 3 \phi_\infty +o(1)$ as $R\to \infty$, while all other integrals tend to zero. This together with the explicit formula 
\begin{equation}\label{WeisF}
\log  \det\left(\Delta_\flat\!\!\restriction_{|w|\leq 1/R}\right)=\frac 1  3 \log R +\frac 1 3 \log 2-\frac 1 2 \log 2\pi -\frac 5 {12} -2\zeta'_R(-1)
\end{equation}
for the selfadjoint Dirichlet Laplacian $\Delta_\flat\!\!\restriction_{|w|\leq 1/R}$ in the flat metric disk $\{|w|\leq1/R,|dw|^2\}$ establishes the asymptotics~\eqref{ASUD}; for the explicit formula~\eqref{WeisF} see~\cite[eqn.  (28)]{Weisberger}.  

We have studied the behaviour of the last two multiples in the right hand side of the BFK formula~\eqref{BFKf}  as $R\to+\infty$. Now we are in position to consider the first one.  

Take the flat metric $|dz|^2$ as a reference metric in the disk $|z|\leq R$. Since the metric $e^{2\phi}|dz|^2$ has three conical singularities in the disk $|z|<R$, no classical Polyakov-Alvarez formula like~\eqref{UDP} can be used. We rely on the (generalized) Polyakov-Alvarez type formula in~\cite[Theorem 1.1.2]{KalvinJFA} that is valid for a class of metrics with conical singularities, and, in particular, for the constant curvature metrics. The 
formula gives
\begin{equation}\label{PA_JFA}\begin{aligned}
&\log\frac{\det\left(\Delta_{\pmb\beta}\!\!\restriction_{|z|\leq R}\right)}{\det\left(\Delta_\flat\!\!\restriction_{|z|\leq R}\right)}=
-\frac{1}{12\pi}\left(\int_{|z|\leq R} K\phi e^{2\phi}\,\frac {dz\wedge d\bar z} {-2i}
 +\oint_{|z|=R} \phi\partial_{n_\flat}\phi\,|dz|\right)
  \\
  &-\frac 1{6\pi}\oint_{|z|=R}k_\flat\phi\, |dz|
-\frac 1 {4\pi}\oint_{|z|=R} \partial_{n_\flat}\phi\,|dz| +\frac  1 6 \sum_{j=1}^{3}\frac {\beta_j}{\beta_j+1}\phi_j-\sum_{j=1}^{3}\mathcal C(\beta_j).
\end{aligned}
\end{equation}
Here  $K=2\pi(|\pmb\beta|+2)$ is the (regularized) Gaussian curvature of the metric $e^{2\phi}|dz|^2$,  $ \partial_{ n_\flat}$   is the outer unit normal derivative with respect to the flat reference metric $|dz|^2$, and $k_\flat=1/R$  is the geodesic curvature of the circle $|z|=R$.  The function $\mathcal C$ is the same as in~\eqref{Cb} and Remark~\ref{RCb}.

 Let us stress that the equality~\eqref{PA_JFA}  is exactly the Polyakov-Alvarez type anomaly formula from~\cite{KalvinJFA}. Indeed, in~\cite[Theorem 1.1.2]{KalvinJFA}  we substitute $\psi\equiv 0$ for the potential of the Euclidean reference metric $|dz|^2$, $\phi$ for the metric potential $\varphi$ (as $\varphi=\phi-\psi=\phi$), $K$ for the regularized Gaussian curvature $K_\varphi$ of the metric $e^{2\varphi}|dz|^2$,  zero for the curvature $K_0$ of the reference metric $e^{2\psi}|dz|^2$, and $\psi_j(0)=0$ for the values of the potential $\psi\equiv 0$  at the points   $p_j$ in the support $\supp\pmb\beta$ of the divisor $\pmb\beta$.

By ~\cite[Lemma 3]{Troyanov}  the first order derivatives of the potential $\phi$ obey the estimates
\begin{equation}\label{est_phi'}
\begin{aligned}
 |z-p_j|\cdot\partial_{ z}(\phi(z)-\beta_j\log|z-p_j|)  =o(1)  \text{  as  }  z\to p_j, 
\\
|z-p_j|\cdot \partial_{\bar z}(\phi(z)-\beta_j\log|z-p_j|)   =o(1)  \text{  as  }  z\to p_j,
 \\
|z|\cdot  \partial_{ z}(\phi(z)+2\log|z|)   =o(1)  \text{  as  }  z\to \infty,
\\
 |z|\cdot\partial_{\bar z}(\phi(z)+2\log|z|)   =o(1)  \text{  as  }  z\to\infty.
\end{aligned}
\end{equation}

Thanks to the asymptotics~\eqref{ASphi} and the estimates~\eqref{est_phi'} for the metric potential  $\phi$ in vicinities of the  points  $p_j$ and at infinity, we conclude that  as $R\to+\infty$ the contour integrals in~\eqref{PA_JFA} meet the following estimates:
\begin{equation}\label{PA_JFAest}
\begin{aligned}
-\frac 1 {12\pi}\oint_{|z|=R}\phi\partial_{ n_\flat}\phi\, |dz|=
\frac{1} 6\Bigl( - 4\log R+2\phi_\infty\Bigr)+o(1),
\\
-\frac 1 {4\pi}\oint_{|z|=R}\partial_{ n_\flat}\phi\, |dz|= 1+ o(1),
\\
-\frac 1 {6\pi}\oint_{|z|=R}k_\flat\phi \,|dz|=
 \frac {2} 3 \log R -\frac 1 3 \phi_\infty +o(1).
\end{aligned}
\end{equation}

As in~\eqref{WeisF}, for the determinant of the selfadjoint Dirichlet Laplacian on the disk $|z|\leq R$ endowed with the flat reference metric $|dz|^2$ we have  
\begin{equation}\label{WeisF2}
\log  \det\left(\Delta_\flat\!\!\restriction_{|z|\leq R}\right)=-\frac 1  3 \log R +\frac 1 3 \log 2-\frac 1 2 \log 2\pi -\frac 5 {12} -2\zeta'_R(-1).
\end{equation}

Summing up,     from~\eqref{PA_JFA} together with~\eqref{PA_JFAest} and~\eqref{WeisF2}  we obtain  the asymptotics
\begin{equation}\label{ASLD}
\begin{aligned}
\log{\det\left(\Delta_{\pmb\beta}\!\!\restriction_{|z|\leq R}\right)}=-\frac{|\pmb\beta|+2}{6}\int_{\Bbb C} \phi e^{2\phi}\,\frac {dz\wedge d\bar z} {-2i} 
+1  +\frac  1 6 \sum_{j=1}^{3}\frac {\beta_j}{\beta_j+1}\phi_j -\sum_{j=1}^{3}\mathcal C(\beta_j)\\
-\frac 1  3 \log R +\frac 1 3 \log 2-\frac 1 2 \log 2\pi -\frac 5 {12} -2\zeta'_R(-1)+o(1),\ R\to+\infty.
\end{aligned}
\end{equation}
This completes our study of the behaviour of the  multiples in the right hand side of the BFK formula~\eqref{BFKf} as $R\to+\infty$.

Now we are in position to pass to the limit in the BFK formula~\eqref{BFKf}  as $R\to+\infty$. Taking into account~\eqref{DNJ} together with  asymptotics~\eqref{ASUD} and~\eqref{ASLD} we rewrite the BFK formula~\eqref{BFKf} in the form
$$
\begin{aligned}
\log \det\Delta_{\pmb\beta}=
-&\frac{|\pmb\beta|+2}{6}\int_{\Bbb C} \phi e^{2\phi}\,\frac {dz\wedge d\bar z} {-2i} 
+1  +\frac  1 6 \sum_{j=1}^{3}\frac {\beta_j}{\beta_j+1}\phi_j-\sum_{j=1}^{3}\mathcal C(\beta_j)
\\
&-\frac 1  3 \log R +\frac 1 3 \log 2-\frac 1 2 \log 2\pi -\frac 5 {12} -2\zeta'_R(-1)
\\
      & +\frac 1 {3}\log R -\frac 1 3 \phi_\infty
-2\zeta_R'(-1)-\frac 5 {12} -\frac 1 6\log 2-\frac 1 2 \log \pi
\\
& -\log 2+o(1), \quad R\to+\infty.
\end{aligned}
$$
Combining the like terms and passing to the limit we arrive at the  anomaly formula~\eqref{DetDelta}. 
\end{proof}

For the constant curvature unit area metrics $e^{2\phi}|dz|^2$ with three conical singularities one can obtain not only explicit formulas for the coefficients $\phi_j$ in the asymptotics~\eqref{LEq1}, but also for the metric potential  $\phi$ itself, see Remark~\ref{EFphi} in Appendix~\ref{NP}.  
Now, when we have  the anomaly formula~\eqref{DetDelta} for the determinant of Laplacian at hands, 
one can naively try to  substitute the explicit expression~\eqref{EEphi} for $\phi$  into the integral in~\eqref{DetDelta}, with a hope to obtain an explicit formula for the determinant of Laplacian after the integration.   Unfortunately this  
 plan does not seem to be realistic. Except for the curvature zero case, when  $|\pmb\beta|=-2$ and the integral term in~\eqref{DetDelta} disappears, see~Remark~\ref{FLATLA} in Section~\ref{SecGE} below. In order to study the general case, we introduce the Liouville action $\mathcal S_{\pmb\beta}[\phi]$. Then, closely following  original ideas of A. Zamolodchikov and Al. Zamolodchikov~\cite{Z-Z},  we obtain the explicit formula~\eqref{ExplLA} for $\mathcal S_{\pmb\beta}[\phi]$.

\begin{proof}[Proof of Theorem~\ref{THMdet}] The assertion of theorem is an immediate consequence of 
Proposition~\ref{PropDet} together with the definition~\eqref{LA1} for the Liouville action $\mathcal S_{\pmb\beta}[\phi]$  and the definition~\eqref{FH} for the functional $\mathcal H_{\pmb\beta}[\phi]$.

Indeed, taking into account the definitions of  $\mathcal S_{\pmb\beta}[\phi]$ and $\mathcal H_{\pmb\beta}[\phi]$, one can rewrite the anomaly formula~\eqref{DetDelta} for $\log\det\Delta_{\pmb\beta}$  in the equivalent form~\eqref{DetDelta1}. This 
completes the proof of Theorem~\ref{THMdet}. 
\end{proof}

Let us also mention that in Proposition~\ref{PropDet} and Theorem~\ref{THMdet} we restrict ourselves to the case of three conical singularities only because the main purpose of this paper is to deduce an explicit closed formula for the determinant. Starting from four conical singularities on a $2$-sphere an explicit construction of the uniformazation map is an open long standing  problem, while in the case of three conical singularities the construction is essentially well-known, cf. Appendix~\ref{NP}. Thus we do not known how to obtain general explicit closed formulae for the coefficients $\phi_j$ and the Liouville action in the case of $n>3$ conical singularities. However, this makes no obstruction towards obtaining an anomaly formula for the Determinant of Laplacian similar to~\eqref{DetDelta} (or~\eqref{DetDelta1}) for any number of conical singularities, the proof remains essentially the same.

We end this section with a remark that an analog of the anomaly formula~\eqref{DetDelta}  for the determinant  can also be obtained even for non-constant curvature  singular metrics. For instance, the proof of Proposition~\ref{PropDet} does not require any significant changes if the (regularized) Gaussian curvature $K$ of a unit area metric $e^{2\phi}|dz|^2$  is any smooth function that is constant only in vicinities of the conical singularities $p_j\in \Bbb C$ of  $e^{2\phi}|dz|^2$. In this case  the metric potential $\phi$ satisfies the Liouville equation~\eqref{LEq1} with $2\pi(|\pmb\beta|+2)$ replaced by $K=K(z)$ in the right hand side,  and $\phi$ has the asymptotics~\eqref{ASphi} with some coefficients $\phi_j$ and $\phi_\infty$.  
As a consequence, the anomaly formula~\eqref{PA_JFA} is still valid~\cite{KalvinJFA}, but the Gaussian curvature $K=K(z)$ in it cannot be replaced by $2\pi(|\pmb \beta|+2)$ anymore.  The resulting analog of the anomaly formula~\eqref{DetDelta}  reads
$$
\begin{aligned}
\log
 {\det\Delta_{\phi}}
 = -\frac 1{12\pi}\int_{\Bbb C}  K \phi e^{2\phi}\,\frac {dz\wedge d\bar z} {-2i}   -\frac 1 3 \phi_\infty
  +\frac  1 6 \sum_{j=1}^{3}\frac {\beta_j}{\beta_j+1}\phi_j-\sum_{j=1}^{3}\mathcal C(\beta_j)
\\
 -\frac 4 3 \log 2 -4\zeta_R'(-1)+\frac 1 {6} -\log\pi.
\end{aligned}
$$
This anomaly formula can also be written in terms of  $\mathcal S_{\pmb\beta}[\phi]$,      $\mathcal H_{\pmb\beta}[\phi]$, and   $\mathcal C(\beta_j)$ as in Theorem~\ref{THMdet}:  
The definitions of $\mathcal H_{\pmb\beta}[\phi]$ and $\mathcal C(\beta_j)$ remain the same, but for the Liouville action one has to take a more general definition. For instance,   the Liouville action can be defined via the equality
\be\label{LAgeneral}
\mathcal S_{\pmb\beta}[\phi]:=  \int_{\Bbb C} K \phi e^{2\phi}  \frac {dz\wedge d\bar z }{-2i} - 2\pi(|\pmb\beta|+2) +2\pi\sum \beta_j\phi_j+4\pi\phi_\infty.
\ee
We shall not dwell upon this and will restrict ourselves to the unit area metrics of constant curvature $K=2\pi(|\pmb\beta|+2)$ as before.

Let us stress that the results of this section do not depend on the normalization of  the distinct marked points $p_j\in \Bbb C$.  When formulating the main results of this paper in Section~\ref{MainResults} we made a particular choice of the  normalization only to simplify the exposition.

\section{Liouville action}\label{LiAc}

In this section we find the Liouville action $\mathcal S_{\pmb\beta}[\phi]$   in a closed explicit form. In particular, we prove Theorem~\ref{LAexplInt} and Corollary~\ref{COR}.  Thus in Subsection~\ref{SecGE} we show that the Liouville action $\mathcal S_{\pmb\beta}[\phi]$ satisfies a system of governing differential equations. Then in Subsection~\ref{LA_S2} we integrate the system and find the constant of integration. This constitutes the proof of Theorem~\ref{LAexplInt}.  As a consequence of Theorem~\ref{THMdet} (proved in Section~\ref{DetLap}) and Theorem~\ref{LAexplInt} (that we prove in Subsection~\ref{LA_S2} below) we  obtain Corollary~\ref{COR}. 

To simplify the exposition we normalize the marked points  $p_j\in\Bbb C$ so that  $p_j=0,\pm1$ and consider only the metrics of unit area (this is exactly what we need in order to prove Theorem~\ref{LAexplInt} and Corollary~\ref{COR}). The (minor) modifications required to obtain similar results for the Liouville action evaluated on an arbitrary  constant curvature metric with three conical singularities
 are briefly discussed in Remark~\ref{GLA}.

\subsection{Governing equations}\label{SecGE}

\begin{lemma}[After A. Zamolodchikov \& Al. Zamolodchikov]\label{ZZ}

The Liouville action   $\mathcal S_{\pmb\beta}[\phi]$ introduced in Theorem~\ref{THMdet} 
satisfies the system of governing  differential equations 
\begin{equation}\label{GovEqn}
\partial_{\beta_j}\mathcal S_{\pmb\beta}[\phi]=4\pi \phi_j-2\pi,\quad j=1,2,3.
\end{equation}
Here $\phi_1$, $\phi_2$, and $\phi_3$ are the coefficients in the asymptotics~\eqref{ASphi} of $\phi$.
\end{lemma}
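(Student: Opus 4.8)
The plan is to differentiate the Liouville action $\mathcal S_{\pmb\beta}[\phi]$, given in~\eqref{LA1}, with respect to $\beta_j$ and show that after a cancellation miracle only the boundary term $4\pi\phi_j - 2\pi$ survives. Writing $\dot\phi = \partial_{\beta_j}\phi$, the first task is to justify differentiating under the integral sign and to record the behaviour of $\dot\phi$ near the marked points and at infinity: since the asymptotics~\eqref{ASphi} read $\phi = \beta_k\log|z-p_k| + \phi_k + o(1)$, differentiation in $\beta_j$ gives $\dot\phi = \delta_{jk}\log|z-p_j| + \dot\phi_k + o(1)$ near $p_k$ (with a logarithmic singularity only at $p_j$ itself), and $\dot\phi = \dot\phi_\infty + o(1)$ at infinity, together with matching estimates on the first derivatives (these follow from the Troyanov-type bounds already invoked in Section~\ref{DetLap}). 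Note $\dot\phi \in L^2_{\pmb\beta}$ despite the log, so all integrals below converge.

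Next I would carry out the differentiation term by term. The term $2\pi(|\pmb\beta|+2)(\int \phi e^{2\phi} - 1)$ produces $2\pi(\int\phi e^{2\phi} - 1)$ from the prefactor, plus $2\pi(|\pmb\beta|+2)\int(\dot\phi + 2\phi\dot\phi)e^{2\phi}$. The term $2\pi\sum_k\beta_k\phi_k$ gives $2\pi\phi_j + 2\pi\sum_k\beta_k\dot\phi_k$, and $4\pi\phi_\infty$ gives $4\pi\dot\phi_\infty$. The heart of the matter is the integral $\int(\dot\phi + 2\phi\dot\phi)e^{2\phi}\,\frac{dz\wedge d\bar z}{-2i}$. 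For the piece $2\int \phi\dot\phi e^{2\phi}$, I integrate by parts using the Liouville equation~\eqref{LEq1}, i.e. $-4\partial_z\partial_{\bar z}\phi = 2\pi(|\pmb\beta|+2)e^{2\phi}$: this lets me trade $\phi e^{2\phi}$ against $-\frac{1}{2\pi(|\pmb\beta|+2)}\cdot 4\partial_z\partial_{\bar z}\phi$ and move derivatives onto $\dot\phi$, while carefully tracking the small circles excised around $p_1,p_2,p_3$ and the large circle $|z|=R$. The boundary contributions from these contours, evaluated via the asymptotics of $\phi$ and $\dot\phi$ recorded above, are exactly what generates the surviving $4\pi\phi_j$; simultaneously the bulk terms combine with the $\int\dot\phi\, e^{2\phi}$ piece and with $2\pi\sum_k\beta_k\dot\phi_k + 4\pi\dot\phi_\infty$ so that everything involving $\dot\phi_k$, $\dot\phi_\infty$, and $\int\dot\phi\,e^{2\phi}$ cancels. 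This is the computation Zamolodchikov--Zamolodchikov perform in~\cite{Z-Z} for the curvature-fixed normalization; here the only change is that the area, not $K$, is held fixed, which shifts where the constant $-2\pi$ lands.

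The main obstacle I anticipate is bookkeeping the small-circle limits correctly: one must show that as the excision radius $\varepsilon\to 0$ around $p_k$, the boundary integrals $\oint_{|z-p_k|=\varepsilon}(\dot\phi\,\partial_n\phi - \phi\,\partial_n\dot\phi + \dots)$ contribute $2\pi\beta_k\dot\phi_k$ from $k\neq j$ (which then cancels against $\partial_{\beta_j}(2\pi\beta_k\phi_k)$) and a finite $\phi$-dependent term from $k=j$ that is precisely $4\pi\phi_j$ up to the curvature prefactor; the logarithmic singularity of $\dot\phi$ at $p_j$ is what makes this term nonzero, and one has to verify that the genuinely divergent pieces $(\log\varepsilon)$ cancel between the competing boundary integrals rather than contribute. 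The infinity circle $|z|=R\to\infty$ must be handled in parallel, using $\phi = -2\log|z| + \phi_\infty + o(1)$ and $\dot\phi = \dot\phi_\infty + o(1)$, to produce the $4\pi\dot\phi_\infty$ cancellation and no leftover $\log R$. Once these limits are pinned down, collecting the surviving terms yields $\partial_{\beta_j}\mathcal S_{\pmb\beta}[\phi] = 4\pi\phi_j - 2\pi$, as claimed. Alternatively, one can sidestep some of the integration-by-parts delicacy by differentiating a known variational formula for the Liouville action with respect to the data (the classical fact that the on-shell action has $\phi_j$ as its derivative with respect to $\beta_j$, a Polyakov-type conformal Ward identity), but I would present the direct computation since it makes the $-2\pi$ term transparent.
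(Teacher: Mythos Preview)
Your approach is correct in substance and would go through with the bookkeeping you describe, but the paper organises the computation differently in a way that sidesteps most of the delicate small-circle analysis you anticipate. Rather than differentiating the definition~\eqref{LA1} directly, the paper first rewrites $\mathcal S_{\pmb\beta}[\phi]$ in the ``Dirichlet'' form~\eqref{LFTZ}: using the Liouville equation and Green's formula on the excised domain $\Bbb C_R$, the term $2\pi(|\pmb\beta|+2)\int\phi e^{2\phi}$ is converted into $\int 4|\phi_z|^2$ plus explicit contour integrals around each $p_k$ and at $|z|=R$, with counterterms $2\pi\sum\beta_j^2\log R+8\pi\log R$ making the limit $R\to\infty$ finite. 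Only then is the differentiation in $\beta_\ell$ performed. The payoff is that after one more integration by parts the bulk contribution becomes $2\int(-4\partial_z\partial_{\bar z}\phi-Ke^{2\phi})\dot\phi$, which vanishes \emph{pointwise} by~\eqref{LEq1}; no division by $K$ is needed (so the flat case $|\pmb\beta|=-2$ is covered uniformly), and the contour integrals have already been evaluated as $2\pi\phi_k$ and $4\pi\phi_\infty$ in the rewriting step, so their $\beta_\ell$-derivatives read off immediately. The $-2\pi$ then appears simply as $-\partial_{\beta_\ell}K$ from the $-Ke^{2\phi}$ term.

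Your route---differentiate first, then integrate by parts---reaches the same destination but forces you to track the interaction of the $\log|z-p_j|$ singularity of $\dot\phi$ with the boundary integrals by hand, and to divide by the curvature when trading $\phi e^{2\phi}$ for $\partial_z\partial_{\bar z}\phi$. That division is harmless away from $|\pmb\beta|=-2$ (and the flat case is trivial anyway since the integral term drops out), but it is an avoidable artefact. The paper's ordering---regularise, then differentiate---is the cleaner packaging and is worth knowing.
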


\begin{proof}[Proof of Lemma~\ref{ZZ}]
Denote 
$$
\Bbb C_R:=\{z\in\Bbb C: 1/R\leq |z|\leq R, |z-1|\geq 1/R, |z+1|\geq 1/R  \}.
$$
Since $\phi$ satisfies the Liouville equation~\eqref{LEq1}, for the  integral in the definition~\eqref{LA1} of the Liouville action $\mathcal S_{\pmb\beta}[\phi]$ we have
$$
\begin{aligned}
 2\pi(|\pmb\beta|+2)\int_{\Bbb C}  \phi e^{2\phi}  \frac {dz\wedge d\bar z }{-2i}&=\lim_{R\to+\infty} \int_{\Bbb C_R} (-4\partial_z\partial_{\bar z}\phi) \phi  \frac {dz\wedge d\bar z }{-2i}
\\
&=\lim_{R\to +\infty}\left(\int_{\Bbb C_R}4|\phi_z|^2\frac{dz\wedge d\bar z}{-2i}-i\int_{\partial\Bbb C_R} \bigl(\phi\phi_{\bar z}\,d\bar z -\phi\phi_z\, dz\bigr)\right).
\end{aligned}
$$
As a consequence of the estimates~\eqref{est_phi'} for the first order derivatives of $\phi$, we obtain
$$
\begin{aligned}
\lim_{R\to +\infty}&\left(\int_{\Bbb C_R}4|\phi_z|^2\frac{dz\wedge d\bar z}{-2i}-i\int_{\partial\Bbb C_R} \bigl(\phi\phi_{\bar z}\,d\bar z -\phi\phi_z\, dz\bigr)\right)
\\
&=\lim_{R\to +\infty}\left(\int_{\Bbb C_R}4|\phi_z|^2\frac{dz\wedge d\bar z}{-2i}+i\oint_{|z|=R}\phi\left(\frac{d\bar z }{\bar z} -\frac {dz}{z}\right) \right.
\\
&\qquad\qquad \qquad\left.  - \frac i 2  \sum_{j=1}^{3}\beta_j \oint_{|z-p_j|=1/R} \phi\left(\frac{d\bar z}{\bar z-\bar p_j} -\frac {dz}{z-p_j}\right) \right).
\end{aligned}
$$
The contour integrals above satisfy the estimates
\begin{equation}\label{aux}
-\frac i 2  \oint_{|z-p_j|=1/R} \phi\left(\frac{d\bar z}{\bar z-\bar p_j} -\frac {dz}{z-p_j}\right)+2\pi\beta_j\log R =2 \pi\phi_j +o(1),\quad R\to+\infty,
\end{equation}
$$
i\oint_{|z|=R}\phi\left(\frac{d\bar z }{\bar z} -\frac {dz}{z}\right) +8\pi\log R
= 4 \pi\phi_\infty+o(1),\quad R\to+\infty.
$$
Indeed, these estimates easily follow from the behaviour~\eqref{ASphi} of $\phi$ near the marked points~$p_j$  and at infinity.

Summing up, we can  rewrite the Liouville action~\eqref{LA1} in the following equivalent form:
\begin{equation}\label{LFTZ}
\begin{aligned}
\mathcal S_{\pmb\beta}[\phi]
&=\lim_{R\to +\infty}\Biggl(\int_{\Bbb C_R}(4|\phi_z|^2-Ke^{2\phi})\frac{dz\wedge d\bar z}{-2i}+2i\oint_{|z|=R}\phi\left(\frac{d\bar z }{\bar z} -\frac {dz}{z}\right)
\\
&-  i   \sum_{j=1}^{3}\beta_j \oint_{|z-p_j|=1/R} \phi\left(\frac{d\bar z}{\bar z-\bar p_j} -\frac {dz}{z-p_j}\right) + 2\pi\log R\sum\beta_j^2+ 8\pi\log R  \Biggr).
\end{aligned}
\end{equation}
Here $K=2\pi(|\pmb\beta|+2)$ is the (regularized) Gaussian curvature of the unit area singular metric $ e^{2\phi}|dz|^2$.

Differentiating~\eqref{LFTZ} with respect to $\beta_\ell$, and taking into account the Liouville equation~\eqref{LEq1} together with the asymptotics~\eqref{aux}, we obtain
\be\label{auxSGE}
\begin{aligned}
\partial_{\beta_\ell}\mathcal S_{\pmb\beta}&[\phi]
=\lim_{R\to +\infty}\Biggl(\int_{\Bbb C_R}(4\partial_{\beta_\ell}|\phi_z|^2-2Ke^{2\phi}\partial_{\beta_\ell}\phi)\frac{dz\wedge d\bar z}{-2i} -  ( \partial_{\beta_\ell}K) \int_{\Bbb C_R}e^{2\phi}\frac{dz\wedge d\bar z}{-2i} 
\\
&+2i\oint_{|z|=R}\partial_{\beta_\ell}\phi\left(\frac{d\bar z }{\bar z} -\frac {dz}{z}\right) 
 -  i   \sum_{j=1}^{3}\beta_j \oint_{|z-z_j|=1/R}\partial_{\beta_\ell} \phi\left(\frac{d\bar z}{\bar z-\bar z_j} -\frac {dz}{z-z_j}       \right) 
 \\
&\qquad \qquad -  i   \oint_{|z-z_\ell|=1/R} \phi\left(\frac{d\bar z}{\bar z-\bar z_\ell} -\frac {dz}{z-z_\ell}\right) +4\pi  \beta_\ell \log R \Biggr)
\\
&=2 \lim_{R\to +\infty}\left(\int_{\Bbb C_R}(-4\partial_{z}\partial_{\bar z} \phi-Ke^{2\phi})\partial_{\beta_\ell}\phi\frac{dz\wedge d\bar z}{-2i} \right)+4\pi\phi_\ell -   \partial_{\beta_\ell}K
\\
&=4\pi\phi_\ell -  \partial_{\beta_\ell}K.
\end{aligned}
\ee
Since $\partial_{\beta_\ell}K=2\pi$, this completes the proof of lemma.
\end{proof}

\begin{remark}\label{FLATLA} Consider the particularly simple curvature zero case. Then $|\pmb\beta|=-2$ and  the flat  metric $e^{2\phi}|dz|^2$ on the Riemann sphere $\overline{\Bbb C}$ can be written in the form  
$$
e^{2\phi}|dz|^2= C_{\pmb\beta}^2|z+1|^{2\beta_1}|z|^{2\beta_2} |z-1|^{2\beta_3}|dz|^2,
$$
see e.g.~\cite{OPS1,TroyanovSSC} and Appendix~\ref{NP}.  The scaling factor $C^2_{\pmb\beta}$  ensures that $e^{2\phi}|dz|^2 $ is a unit area metric, i.e.
$$
C_{\pmb\beta}=\left(\int_{\Bbb C} |z+1|^{2\beta_1}|z|^{2\beta_2} |z-1|^{2\beta_3}\frac {dz\wedge d\bar z}{-2i} \right)^{-1/2};
$$
recall that we normalize the marked points so that $p_1=-1$, $p_2=0$, and $p_3=1$.

Clearly, the metric potential $\phi$ satisfies the Liouville equation~\eqref{LEq1}  and has the asymptotics~\eqref{ASphi} with 
\begin{equation}\label{Potflat}
\phi_1=\beta_3 \log 2+\log C_{\pmb\beta},
\quad \phi_2=\log C_{\pmb\beta},
\quad
\phi_3=\beta_1\log 2 +\log C_{\pmb\beta},
\quad
\phi_\infty=\log C_{\pmb\beta}.
\end{equation}

Since $|\pmb\beta|=-2$, the first term in the right hand side of the definition~\eqref{LA1} for the  Liouville action $\mathcal S_{\pmb\beta}[\phi]$  disappears. As a result we immediately obtain
\begin{equation}\label{LiouvFlat}
\mathcal S_{\pmb\beta}[\phi]\bigr|_{|\pmb\beta|=-2}=\left.\left(2\pi\sum_{j=1}^3 \beta_j\phi_j+4\pi\phi_\infty\right) \right|_{\beta_2=-2-\beta_1-\beta_3}=4\pi\beta_1\beta_3 \log 2.
\end{equation}
In accordance with Lemma~\ref{ZZ} we should have
$$
\partial_{\beta_1}\left(\mathcal S_{\pmb\beta}[\phi]\bigr|_{\beta_2=-2-\beta_1-\beta_3} \right)=(\partial_{\beta_1} \mathcal S_{\pmb\beta}[\phi]-\partial_{\beta_2}\mathcal S_{\pmb\beta}[\phi] )\bigr|_{\beta_2=-2-\beta_1-\beta_3}=4\pi(\phi_1-\phi_2)\bigr|_{\beta_2=-2-\beta_1-\beta_3},
$$
$$
\partial_{\beta_3}\left(\mathcal S_{\pmb\beta}[\phi]\bigr|_{\beta_2=-2-\beta_1-\beta_3} \right)=(\partial_{\beta_3} \mathcal S_{\pmb\beta}[\phi]-\partial_{\beta_2}\mathcal S_{\pmb\beta}[\phi])\bigr|_{\beta_2=-2-\beta_1-\beta_3} =4\pi(\phi_3-\phi_2)\bigr|_{\beta_2=-2-\beta_1-\beta_3}.
$$
This is in agreement with~\eqref{Potflat} and~\eqref{LiouvFlat}.
\end{remark}

\begin{remark} \label{ZZTZ} 
In order to see that  $\mathcal S_{\pmb\beta}[\phi]$ is $4\pi$ times  the Liouville action  introduced by A. Zamolodchikov and Al. Zamolodchikov   in~\cite[Eq. (2.34), where $\varphi=2\phi$, $K=-4\pi\mu b^2$,  and $\eta_j=-\beta_j/2$]{Z-Z}, one need only  note that the contour integrals in~\eqref{LFTZ} can equivalently be represented in the form
$$
i\oint_{|z|=R}\phi\left(\frac{d\bar z }{\bar z} -\frac {dz}{z}\right)= \frac{1}{\pi R}   \oint_{|z|=R}\phi\, ds,
$$
\begin{equation*}
-\frac i 2  \oint_{|z-p_j|=1/R} \phi\left(\frac{d\bar z}{\bar z-\bar p_j} -\frac {dz}{z-p_j}\right)=-\frac R {2\pi}  \oint_{|z-p_j|=1/R} \phi\,{ds}.
\end{equation*}
 The equality~\eqref{LFTZ} also shows that our definition of the Liouville action is in agreement with those in~\cite{CMS,HJ,T-Z}.
 
 Let us also note  that the system of governing equations~\eqref{GovEqn} is slightly different from the one in~\cite[Eq. (4.8)]{Z-Z}. This is because  in~\cite{Z-Z}  the authors consider the metrics $e^{2\phi}|dz|^2$ with thee conical singularities and fixed Gaussian curvature $K=-4\pi\mu b^2<0$, where $b$ is the dimensionless Liouville coupling constant and  $\mu$ is the so-called cosmological constant.  For these metrics the Liouville action is defined via the general formula~\eqref{LAgeneral}  (clearly,~\eqref{LAgeneral} specializes to~\eqref{LA1} if  $K=2\pi(|\pmb\beta|+2)$), or, equivalently, via~\eqref{LFTZ} with $K=-4\pi\mu b^2$.  As a result,  $\partial_{\beta_\ell} K=0$, and the equalities~\eqref{auxSGE} imply 
 $$
 \partial_{\beta_j}   \mathcal S_{\pmb\beta}[\phi]=4\pi \phi_j,\quad j=1,2,3,
 $$ 
which is equivalent to~\cite[Eq. (4.8)]{Z-Z}. For a fixed Gaussian curvature $K$ the flat case with the Liouville action~\eqref{LiouvFlat} corresponds to the limit  as the area $S=2\pi(|\pmb\beta|+2)/K$ of the corresponding metrics $e^{2\phi}|dz|^2$ goes to zero, the cases of positive and negative $K$ have to be studied separately.
\end{remark}

\begin{remark}The Liouville equation~\eqref{LEq1} (with $p_1=-1$, $p_0=0$, and $p_1$=1)
is the Euler-Lagrange equation for the Liouville action functional $\psi\mapsto\mathcal S_{\pmb\beta}[\psi]$ defined via~\eqref{LA1}. 
Indeed, it is not hard to verify that for any $\eta\in C^\infty(\overline{\Bbb C}, \Bbb R)$ one has
$$
\lim_{t\to 0}\frac {\mathcal S_{\pmb\beta}[\phi+t\eta]-\mathcal S_{\pmb\beta}[\phi]}{t}=2\int_{\Bbb C} ( -4\partial_z\partial_{\bar z}\phi- 2\pi(|\pmb\beta|+2)e^{2\phi})\eta\frac{dz\wedge d\bar z}{-2i}.
$$
Thus the functional  $\psi\mapsto\mathcal S_{\pmb\beta}[\psi]$ has a non-degenerate critical point given by the potential $\phi$ satisfying the Liouville equation~\eqref{LEq1} and having the asymptotics~\eqref{ASphi} ; cf. \cite[p.589]{Z-Z},~\cite[Remark 4]{T-Z}.
\end{remark}

\subsection{Liouville action: Explicit expression}\label{LA_S2}
In this subsection we obtain the closed explicit formula~\eqref{ExplLA} for the Liouville action $\mathcal S_{\pmb\beta}[\phi]$ introduced in Theorem~\ref{THMdet}.  Namely, we integrate the system of governing differential equations~\eqref{GovEqn} and find the constant of integration by using Remark~\ref{FLATLA} above.   This constitutes the proof of Theorem~\ref{LAexplInt}, where we utilize the explicit expressions for the coefficients $\phi_j$ obtained in Appendix~\ref{NP}, see  Proposition~\ref{METRIC}.

\begin{proof}[Proof of Theorem~\ref{LAexplInt}] Let us integrate the first governing equation~\eqref{GovEqn} with $\phi_1$ replaced by its explicit expression in terms of $\beta_j$ found in~\eqref{phi123},~\eqref{Phi}.

Integrating the first term  in the right hand side of~\eqref{Phi}  we get
$$
\begin{aligned}
& \frac 1 2 \int \log \frac{\Gamma\left(2+{|\pmb \beta|}/2\right)}{ 4\pi\Gamma\left(-{|\pmb \beta|} /2\right)} \,d\beta_1 =-\frac{\beta_1}{2}\log(4\pi)+\psi^{(-2)}(-|\pmb\beta|/2)+\psi^{(-2)}(2+|\pmb\beta|/2)+C
\\
 & =-\frac{\beta_1}{2}\log(4\pi)+ \zeta_H'\left(-1,-\frac{|\pmb\beta|}2\right)  + \zeta_H'\left(-1,2+\frac{|\pmb\beta|}2\right) -\frac {|\pmb\beta|^2}4 -|\pmb\beta|+C.
\end{aligned}
$$
Here and elsewhere for the (generalized) polygamma function $\psi^{(-2)}$ we use the identity
$$
\begin{aligned}
\psi^{(-2)}(x)={\zeta_H'(-1,x)+(\gamma+\psi(2))\zeta_H(-1,x)}
= \zeta_H'(-1,x) -\frac{B_2(x)} 2,
\end{aligned}
$$
 where $B_2(x)=\frac 1  6 - x + x^2$ is the second Bernoulli polynomial, see e.g.~\cite[Eq. (2.3)]{pgamma}.

For the second term  in the right hand side of~\eqref{Phi} we obtain
$$
\begin{aligned}
\int  \log \frac {\Gamma(-\beta_1)}{\Gamma(1+\beta_1)}\,d\beta_1&=-\psi^{(-2)}(-\beta_1)-\psi^{(-2)}(1+\beta_1)+C
\\
&= \beta_1^2 +\beta_1  - \zeta_H'(-1,-\beta_1)-    \zeta_H'(-1,1+\beta_1) +C.
\end{aligned}
$$

Towards the integration of the third term  in the right hand side of~\eqref{Phi}, we first notice that
$$
\begin{aligned}
&\int \log \frac{ \Gamma(\beta_1-|\pmb \beta|/2) }{   \Gamma(1+|\pmb \beta|/2-\beta_1)  }\,d\beta_1
=2\psi^{(-2)} (\beta_1-|\pmb \beta|/2)+2\psi^{(-2)} (1+|\pmb \beta|/2-\beta_1)+C
\\
&=2\zeta_H'(-1,\beta_1-|\pmb \beta|/2) +2\zeta_H'(-1,1+|\pmb \beta|/2-\beta_1) -2(|\pmb \beta|/2-\beta_1)^2-2(|\pmb \beta|/2-\beta_1)+C.
\end{aligned}
$$
Similarly we obtain 
$$
\begin{aligned}
&\int \log \frac{ \Gamma(1+|\pmb \beta|/2-\beta) }{   \Gamma(\beta-|\pmb \beta|/2)  }\,d\beta_1
=2\psi^{(-2)}(\beta-|\pmb \beta|/2)+2\psi^{(-2)}(1+|\pmb \beta|/2-\beta)+C
\\
&=2\zeta_H'(-1,\beta-|\pmb \beta|/2) +2\zeta_H'(-1,1+|\pmb \beta|/2-\beta) -2(|\pmb \beta|/2-\beta)^2-2(|\pmb \beta|/2-\beta)+C,
\end{aligned}
$$
where either $\beta=\beta_2$ or $\beta=\beta_3$. 
In total,  for the third term  in the right hand side of~\eqref{Phi} we have $$
\begin{aligned}
&\frac 1 2\int  \log \frac
{\Gamma(\beta_1-|\pmb \beta|/2)
 \Gamma(1+|\pmb \beta|/2-\beta_2)  \Gamma(1+|\pmb \beta|/2-\beta_3) }{   \Gamma(1+|\pmb \beta|/2-\beta_1)    \Gamma(\beta_2-|\pmb \beta|/2) \Gamma(\beta_3-|\pmb \beta|/2) }\,d\beta_1
\\
 &=\sum_{j=1}^3 \left(\zeta_H'\left(-1,\beta_j-\frac{|\pmb \beta|}2\right) +\zeta_H'\left(-1,1+\frac{|\pmb \beta|}2-\beta_j \right)- \left(\frac{|\pmb \beta|}2-\beta_j\right)^2 \right)-\frac {|\pmb \beta|}2+C.
 \end{aligned}
$$

Thus integration of the first governing equation in~\eqref{GovEqn} gives
$$
\begin{aligned}
\frac 1 {4\pi} \mathcal S_{\pmb\beta}[\phi]=-\frac {\beta_1^2}{2}\log 2
-\frac{\beta_1}{2}\log(4\pi)+ \zeta_H'\left(-1,-\frac{|\pmb\beta|}2\right)  + \zeta_H'\left(-1,2+\frac{|\pmb\beta|}2\right) -\frac {|\pmb\beta|^2}4 -|\pmb\beta|
\\
+\beta_1^2 +\beta_1  - \zeta_H'(-1,-\beta_1)-    \zeta_H'(-1,1+\beta_1) 
\\+\sum_{j=1}^3 \left(\zeta_H'\left(-1,\beta_j-\frac{|\pmb \beta|}2\right) +\zeta_H'\left(-1,1+\frac{|\pmb \beta|}2-\beta_j \right)- \left(\frac{|\pmb \beta|}2-\beta_j\right)^2 \right)-\frac {|\pmb \beta|}2+C,
\end{aligned}
$$
where the constant of integration $C=C(\beta_2,\beta_3)$ does not depend on $\beta_1$.  

The other two governing  equations~\eqref{GovEqn} can be integrated in exactly the same way.  The required explicit expressions for the coefficients $\phi_2$ and $\phi_3$ are also given in~\eqref{phi123},~\eqref{Phi}. We omit the details. 
 
Summing up, we obtain
\begin{equation}\label{ZZS}
\begin{aligned}
\frac 1 {4\pi} \mathcal S_{\pmb\beta}[\phi]=  -\frac{|\pmb\beta|}2\bigl(1+\log(4\pi)\bigr)+\left(-\frac{\beta_1^2} 2 +\frac{\beta_2^2} 2+2\beta_2  -\frac{\beta_3^2} 2\right)\log 2
\\
+\zeta_H'\left(-1,-\frac{|\pmb\beta|}2\right)  + \zeta_H'\left(-1,2+\frac{|\pmb\beta|}2\right) -\frac {|\pmb\beta|^2}4 -\frac {3|\pmb\beta|}2
\\
+\sum_{j=1}^3 (\beta_j^2 +\beta_j)  - \sum_{j=1}^3 \bigl(\zeta_H'(-1,-\beta_j)+ \zeta_H'(-1,1+\beta_j)\bigr) -\sum_{j=1}^3 \left(\frac{|\pmb \beta|}2-\beta_j\right)^2
\\
+\sum_{j=1}^3 \left(\zeta_H'\left(-1,\beta_j-\frac{|\pmb \beta|}2\right) +\zeta_H'\left(-1,1+\frac{|\pmb \beta|}2-\beta_j\right) \right)+C,
\end{aligned}
\end{equation}
where the constant of integration $C$ does not depend on the orders $\beta_j$ of conical singularities. 
The equality~\eqref{ZZS} simplifies to 
$$
\begin{aligned}
\frac 1 {4\pi} \mathcal S_{\pmb\beta}[\phi]=  -|\pmb\beta| -\frac{|\pmb\beta|}2   \log\pi-\left(\frac{\beta_1^2+2\beta_1} 2-\frac{\beta_2^2+2\beta_2} 2  +\frac{\beta_3^2+2\beta_3} 2  \right)\log 2
\\
-\sum_{j=1}^3\Biggl(  \zeta_H'(-1,-\beta_j)+ \zeta_H'(-1,1+\beta_j)    
-\zeta_H'\left(-1, \beta_j-\frac{|\pmb \beta|}2\right) -\zeta_H'\left(-1,1+\frac{|\pmb \beta|}2-\beta_j\right) \Biggr)
\\
+\zeta_H'\left(-1,-\frac{|\pmb\beta|}2\right)  + \zeta_H'\left(-1,2+\frac{|\pmb\beta|}2\right) +C.
\end{aligned}
$$
In the flat case   the latter equality  takes the form
$$
\frac 1 {4\pi} \mathcal S_{\pmb\beta}[\phi]\Bigr|_{|\pmb\beta|=-2}=2+\log\pi+\beta_1\beta_3\log 2 
+2\zeta_R'(-1)+C.
$$
This together with~\eqref{LiouvFlat} allows one to find the constant of integration:
$$
C=-2-\log \pi-2\zeta_R'(-1).
$$
This completes the proof of Theorem~\ref{LAexplInt}.
\end{proof}

\begin{proof}[Proof of Corollary~\ref{COR}]
The assertion is an immediate consequence of Theorem~\ref{THMdet} and Theorem~\ref{LAexplInt}; see  Proposition~\ref{METRIC}  in Appendix~\ref{NP} and the standard rescaling property in Remark~\ref{Rescaling}.  
\end{proof}

\begin{remark}\label{GLA}  Consider a  (unique) metric $S\cdot e^{2\phi}|dz|^2$ of area $S$ representing a divisor $\pmb\beta=\sum_{j=1}^3\beta_j\cdot p_j$, where $\beta_j\in(-1,0)$ and $p_j\in\Bbb C$ are any three distinct points.  For this metric  the Liouville action can be defined via the formula~\eqref{LAgeneral} (or, equivalently, via~\eqref{LFTZ}) with $\phi$ replaced by  $\phi+\frac 1 2 \log S$. It is then easy to check that 
$$
\mathcal S_{\pmb\beta}\left[\phi+\frac 1 2 \log S\right]=\mathcal S_{\pmb\beta}[\phi]+2\pi(|\pmb\beta|+2)\log S.
$$
Here $\mathcal S_{\pmb\beta}[\phi]$ is the Liouville action defined for the unit  area metric $e^{2\phi}|dz|^2$ by the equality~\eqref{LA1}, where the coefficients $\phi_j$ and $\phi_\infty$ are the same as in the asymptotics~\eqref{ASphi}.  

Note that in Lemma~\ref{ZZ} we assumed that $p_j=0,\pm1$ only to simplify the exposition:  the result and its proof remain valid for any normalization of the marked points $p_j\in\Bbb C$. Since the metric potentials for any two normalizations of the marked points $p_j$ are related by means of a M\"obius transformation, for the coefficient~$\phi_1=\phi_1(\pmb\beta)$ in~\eqref{ASphi} one has 
\begin{equation}\label{mpotpj}
\begin{aligned}
\phi_1 & = (\beta_1+1)\log\left|\frac {p_3-p_2}{(p_3-p_1)(p_2-p_1)}\right|+\log 2 +  \Phi(\beta_1,\beta_2,\beta_3);
\end{aligned}
\end{equation}
 cf. Proposition~\ref{METRIC}. Clearly, the corresponding expressions for $\phi_2=\phi_2(\pmb\beta)$ and $\phi_3=\phi_3(\pmb\beta)$ can be obtained by  permutations of $\beta_j$ and $p_j$. Thus only minor changes are required to  the proof of  Theorem~\ref{LAexplInt}  in order to include into consideration the case of any three distinct points $p_j\in\Bbb C$ and any area $S$.
In particular, for a (fixed) Gaussian curvature $K$ and the area $S=2\pi(|\pmb\beta|+2)/K$ we obtain 

\begin{equation}\label{ZZaction}
\begin{aligned}
&\frac 1 {4\pi}\mathcal S_{\pmb\beta}\left[\phi+\frac 1 2 \log \frac{2\pi(|\pmb\beta|+2)}K\right] = \mathcal S^{(cl)}(K;\beta_1,\beta_2,\beta_3)+\frac{\delta_1+\delta_2-\delta_3}2\log|p_1-p_2|\\
&
+\frac{\delta_2+\delta_3-\delta_1}2\log|p_2-p_3|+\frac{\delta_3+\delta_1-\delta_2}2\log|p_1-p_3|,
\end{aligned}
\end{equation}
where $\delta_j=-\beta_j(\beta_j+2)$.  The classical  Liouville action $\mathcal S^{(cl)}(K;\beta_1,\beta_2,\beta_3)$ is  explicitly defined via the equality
\begin{equation}\label{ZZclaction}
\begin{aligned}
\mathcal S^{(cl)}(K;\beta_1,\beta_2,\beta_3)&= \frac{|\pmb\beta|+2}2\left(\log\frac {2(|\pmb\beta|+2)}{K}-2\right)
\\
&-\sum_{j=1}^3\Biggl(  \zeta_H'(-1,-\beta_j)+ \zeta_H'(-1,1+\beta_j)   
\\ 
&\qquad \qquad -\zeta_H'\left(-1,\beta_j-\frac{|\pmb \beta|}2\right) -\zeta_H'\left(-1,1+\frac{|\pmb \beta|}2-\beta_j\right) \Biggr)
\\
&\qquad \qquad+\zeta_H'\left(-1,-\frac{|\pmb\beta|}2\right) + \zeta_H'\left(-1,2+\frac{|\pmb\beta|}2\right)  -2\zeta_R'(-1).
\end{aligned}
\end{equation}
The classical Liouville action does not depend on the marked points $p_j$,  the dependence of the Liouville action on the marked points is completely described by the last three terms in the right hand side of~\eqref{ZZaction}.

 This is intimately connected with the celebrated DOZZ formula of H. Dorn, H.-J. Otto~\cite{DO} and   A. Zamolodchikov, Al. Zamolodchikov~\cite{Z-Z}. The DOZZ formula is a heuristically deduced explicit expression  for the three-point structure constant $C(\alpha_1,\alpha_2,\alpha_3)$ of the Liouville conformal field theory. In the classical limit the leading (exponential) asymptotics  of  the three-point structure constant  is governed by the classical Liouville action. Namely, in accordance with~\cite[Eq. (3.20)]{Z-Z},  the structure constant has the leading asymptotics
$$
C(-\beta_1/2b,-\beta_2/2b,-\beta_3/2b)\sim\exp\left(-\frac 1 { b^2} \mathcal S^{(cl)} (-4\pi\mu b^2;\beta_1,\beta_2,\beta_3) \right)
$$
as the Liouville coupling constant $b$ goes to zero. Here $\mu>0$ is the so-called  cosmological constant, and hence the Gaussian curvature $K=-4\pi\mu b^2$   is negative.  

Note that the right hand side of~\eqref{ZZaction} (resp. of ~\eqref{ZZclaction}) is an explicit expression for the Liouville action found in~\cite[Eq.~(4.12)]{Z-Z}   (resp. for the classical Liouville action found in~\cite[Eq. (3.21)]{Z-Z}) in a different form, where $\eta_j=-\beta_j/2$ and $|x_{ij}|=|p_i-p_j|$. The DOZZ formula itself received a mathematical interpretation and proof only recently~\cite{KRV-DOZZ}.

With the help of the equalities~\eqref{mpotpj} and~\eqref{ZZaction} one can also check that the right hand side of the anomaly formula for the determinant of Laplacian~\eqref{DetDelta1} does not depend on the particular choice of normalization for the marked points~$p_j$ indeed.
\end{remark}

\section{Determinant  for flat and  limit spherical metrics}\label{FSL}

\subsection{Flat metrics}\label{FLAT_METRICS} In the case $|\pmb\beta|=-2$ the Gauss-Bonnet theorem implies $K=2\pi(2+|\pmb\beta|)=0$. Thus we deal with a flat  (Gaussian curvature $K=0$) singular surface ---  a Euclidean surface with conical singularities in the sense of~\cite{TroyanovSSC}, see also~\cite{H1,H2,H3,HK,KalvinJGA,OPS1}.  The surface can be visualized as a triangle envelope: a Euclidean triangle with internal angles $\pi(\beta_j+1)$ glued  along the edges to its reflection in a side, see Appendix~\ref{NP} for more detail. 
Let us also recall that the space of uniform metrics~\cite{OPS1}  on a sphere with three distinct open disks removed can be identified with a subset of the space  
of all flat metrics with three conical singularities.

In~\cite{KalvinJGA} we studied the determinant of Friedrichs Laplacians on the Euclidean isosceles triangle envelopes. In particular, we derived  a closed explicit formula for the zeta-regularized spectral determinant in terms of the angles and the total area of the surface~\cite[Prop. 3.1 and Eqs. (7.1), (7.2)]{KalvinJGA}.  The isosceles triangle envelopes correspond to the restriction $\beta_1=\beta_3=\beta$ and $\beta_2=-2-2\beta$, where $\beta\in(-1,-1/2)$. Proposition~\ref{ETEnv} below allows for   arbitrary orders of conical singularities.

\begin{proposition}\label{ETEnv} Let $\beta_1+\beta_2+\beta_3=-2$ with $\beta_j\in(-1,0)$. Consider the  flat metric
$$
S\cdot C_{\pmb \beta}^2 |z+1|^{2\beta_1} |z|^{2\beta_2}|z-1|^{2\beta_3}|dz|^2
$$
of area $S$ on the Riemann sphere $\overline{\Bbb C}$, where $C^2_{\pmb \beta}$ is the scaling factor 
$$
C^2_{\pmb\beta}=2^{2\beta_2+2}\frac {\Gamma(-\beta_1)\Gamma(-\beta_2)\Gamma(-\beta_3)}{\pi\Gamma(\beta_1+1)\Gamma(\beta_2+1)\Gamma(\beta_3+1)}.
$$
Then for the zeta-regularized spectral determinant of the corresponding Friedrichs  Laplacian $\Delta^S_{\pmb\beta}\Bigr|_{|\pmb\beta|=-2}$ we have the explicit closed formula
\begin{equation}\label{DetEuclEnv}
\begin{aligned}
\log  {\det\Delta^S_{\pmb\beta}}\Bigr|_{|\pmb\beta|=-2} &= \frac{1}{6}\left( \frac{\beta_1\beta_3}{\beta_1+1} + \frac{\beta_1\beta_3}{\beta_3+1}\right)\log 2 
 \\
 -&  \sum_{j=1}^{3}\left(2\zeta'_B(0;\beta_j+1,1,1) -\frac {\beta_j^2}{6(\beta_j+1)}\log 2 +\frac 1 2 \log(\beta_j+1)\right)
 \\
 -&\log C^2_{\pmb\beta}-\left(\zeta_{\pmb\beta}(0)\Bigr|_{|\pmb\beta|=-2}\right)\log (   C^2_{\pmb\beta} S)
-\frac 4 3 \log 2 +2\zeta_R'(-1) -\log\pi.
 \end{aligned}
\end{equation}
 Here $\zeta'_B$ and $\zeta_R'$ are the derivatives with respect to $s$ of the Barnes double zeta function $\zeta_B(s;a,b,x)$ and the Riemann zeta function $\zeta_R(s)$ respectively, and 
\begin{equation}\label{Zeta_0_flat}
\zeta_{\pmb\beta}(0)\Bigr|_{|\pmb\beta|=-2}=-\frac {13}{12}+\frac 1 {12}\sum_{j=1}^3\frac 1{\beta_j+1}. 
\end{equation}
\end{proposition}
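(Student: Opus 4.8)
The plan is to deduce Proposition~\ref{ETEnv} as the specialization $|\pmb\beta|=-2$ of the general explicit formula in Corollary~\ref{COR}, turning the compact (but abstract) terms $\mathcal S_{\pmb\beta}[\phi]$, $\mathcal H_{\pmb\beta}[\phi]$, and $\mathcal C(\beta_j)$ into the elementary expressions appearing in~\eqref{DetEuclEnv}. First I would record what each ingredient becomes in the flat case. By Remark~\ref{FLATLA}, when $|\pmb\beta|=-2$ the unit-area flat metric is $C_{\pmb\beta}^2|z+1|^{2\beta_1}|z|^{2\beta_2}|z-1|^{2\beta_3}|dz|^2$ with the coefficients $\phi_j$ given explicitly in~\eqref{Potflat}: $\phi_1=\beta_3\log 2+\log C_{\pmb\beta}$, $\phi_2=\log C_{\pmb\beta}$, $\phi_3=\beta_1\log 2+\log C_{\pmb\beta}$, $\phi_\infty=\log C_{\pmb\beta}$, and from~\eqref{LiouvFlat} we have $\tfrac 1{4\pi}\mathcal S_{\pmb\beta}[\phi]\big|_{|\pmb\beta|=-2}=\beta_1\beta_3\log 2$. (One should also check that the stated $C_{\pmb\beta}^2$ agrees with the beta-integral $\int_{\Bbb C}|z+1|^{2\beta_1}|z|^{2\beta_2}|z-1|^{2\beta_3}\tfrac{dz\wedge d\bar z}{-2i}$ via the classical complex Selberg/Dotsenko--Fateev integral, using $\beta_1+\beta_2+\beta_3=-2$; this is a standard computation.)

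Next I would substitute into~\eqref{DDAS}. The term $-\tfrac{|\pmb\beta|+1}{6}$ becomes $+\tfrac16$. For $-\tfrac1{12\pi}\bigl(\mathcal S_{\pmb\beta}[\phi]-\pi\log\mathcal H_{\pmb\beta}[\phi]\bigr)$, I use $\log\mathcal H_{\pmb\beta}[\phi]=2\sum_j(\beta_j+1-\tfrac1{\beta_j+1})\phi_j$ from~\eqref{FH}, plug in the $\phi_j$ from~\eqref{Potflat}, and collect the $\log C_{\pmb\beta}$ terms separately from the $\log 2$ terms. The $\log C_{\pmb\beta}$ coefficient that emerges should combine with the $\zeta_{\pmb\beta}(0)\log S$ rescaling term and the leftover $-\tfrac16$ from $\zeta_{\pmb\beta}(0)$ (note $\zeta_{\pmb\beta}(0)|_{-2}=\tfrac{2+|\pmb\beta|}{6}-\tfrac1{12}\sum(\beta_j+1-\tfrac1{\beta_j+1})-1$ reduces to~\eqref{Zeta_0_flat}) to produce exactly the combination $-\log C_{\pmb\beta}^2-(\zeta_{\pmb\beta}(0)|_{-2})\log(C_{\pmb\beta}^2 S)$ in~\eqref{DetEuclEnv}; this is the one place where the bookkeeping of constants must be done carefully. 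The $\log 2$ terms coming from $\beta_1\beta_3\log 2$ (from $\mathcal S$), from the $\phi_1,\phi_3$ contributions weighted by $\beta_j+1-\tfrac1{\beta_j+1}$ inside $\log\mathcal H$, and from the $-\tfrac{\beta_j^2}{6(\beta_j+1)}\log 2$ pieces hidden in $\mathcal C(\beta_j)$ should all be assembled; using $\beta_1+\beta_2+\beta_3=-2$ repeatedly one expects them to collapse to $\tfrac16\bigl(\tfrac{\beta_1\beta_3}{\beta_1+1}+\tfrac{\beta_1\beta_3}{\beta_3+1}\bigr)\log 2$ plus the $-\tfrac43\log 2$ that is already displayed. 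Finally, $-\sum_j\mathcal C(\beta_j)$ with $\mathcal C$ from~\eqref{Cb} contributes $-\sum_j\bigl(2\zeta_B'(0;\beta_j+1,1,1)-2\zeta_R'(-1)-\tfrac{\beta_j^2}{6(\beta_j+1)}\log 2-\tfrac{\beta_j}{12}+\tfrac12\log(\beta_j+1)\bigr)$; the three $-2\zeta_R'(-1)$'s combine with the $-4\zeta_R'(-1)$ in~\eqref{DDAS} to give $+2\zeta_R'(-1)$, the $-\sum_j(-\tfrac{\beta_j}{12})=\tfrac{|\pmb\beta|}{12}=-\tfrac16$ cancels the $+\tfrac16$ from above, and the remaining $\log(\beta_j+1)$ and $\zeta_B'$ terms land exactly as in the middle line of~\eqref{DetEuclEnv} (after moving the $-\tfrac{\beta_j^2}{6(\beta_j+1)}\log 2$ back inside the sum, matching the way the proposition groups things). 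The $-\log\pi$ terms (one from~\eqref{DDAS}, and any arising from $\mathcal S_{\pmb\beta}[\phi]$ via~\eqref{ExplLA} at $|\pmb\beta|=-2$) should reduce to the single $-\log\pi$ shown.

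The main obstacle is purely organizational rather than conceptual: keeping track of all the $\log 2$, $\log\pi$, $\zeta_R'(-1)$, and rational constants so that they recombine into the asymmetric-looking but correct right-hand side of~\eqref{DetEuclEnv}, and in particular verifying that the scattered $\log 2$ contributions telescope — via the identity $\beta_2+1=-(\beta_1+1)-(\beta_3+1)+1$, i.e. $\beta_1+\beta_2+\beta_3=-2$ — to the stated $\tfrac16(\tfrac{\beta_1\beta_3}{\beta_1+1}+\tfrac{\beta_1\beta_3}{\beta_3+1})\log 2$. A secondary check is the evaluation of $C_{\pmb\beta}^2$ as a ratio of Gamma functions, which follows from the standard Dotsenko--Fateev complex integral once one imposes $|\pmb\beta|=-2$; alternatively it can be lifted directly from Appendix~\ref{NP}. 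Once these elementary identities are in place, the proof is just the substitution $|\pmb\beta|=-2$ into Corollary~\ref{COR} together with Remark~\ref{FLATLA} and the rescaling property of Remark~\ref{Rescaling}, so I would present it as such, relegating the constant-chasing to a short displayed computation.
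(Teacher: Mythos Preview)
Your proposal is correct and follows essentially the same route as the paper: specialize Corollary~\ref{COR} at $|\pmb\beta|=-2$, read off $\mathcal S_{\pmb\beta}[\phi]$ and the $\phi_j$ from Remark~\ref{FLATLA}, compute $\log\mathcal H_{\pmb\beta}[\phi]$ from~\eqref{FH}, and then do the bookkeeping with~\eqref{Cb} and the rescaling~\eqref{rescaling}. One small clean-up: since you are taking $\mathcal S_{\pmb\beta}[\phi]\big|_{|\pmb\beta|=-2}=4\pi\beta_1\beta_3\log 2$ directly from~\eqref{LiouvFlat}, there is no $\log\pi$ contribution from $\mathcal S$ to track, so the only $-\log\pi$ is the one already displayed in~\eqref{DDAS}.
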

\begin{proof} The equality~\eqref{DetEuclEnv} is a special case of the one in Corollary~\ref{COR}. 

Indeed, for the flat case the  coefficients $\phi_j$ in the asymptotics of the metric potential $\phi$ and the Liouville action $\mathcal S_{\pmb\beta}[\phi]$  were already found in Remark~\ref{FLATLA}. In addition, for the functional $ \mathcal H_{\pmb\beta}[\phi]$ defined in~\eqref{FH} we obtain
\begin{equation}\label{10:34}
\begin{aligned}
\frac 1 {12}\log \mathcal H_{\pmb\beta}[\phi]\Bigr|_{|\pmb\beta|=-2}
=&\frac 1 6\left(\beta_1+1-\frac 1 {\beta_1+1}\right)\beta_3\log 2
\\
&+\frac 1 6\left(\beta_3+1-\frac 1 {\beta_3+1}\right)\beta_1\log 2-(\zeta_{\pmb\beta}(0)+1)\log C^2_{\pmb\beta}.
\end{aligned}
\end{equation}
Here the expression $-(\zeta_{\pmb\beta}(0)+1)$ in the right hand side   represents the sum
$$
\frac 1 {12}\sum_j\left(\beta_j+1-\frac 1 {\beta_j+1}\right)=\frac {2+|\pmb\beta|}{6}-\zeta_{\pmb\beta}(0)-1=-(\zeta_{\pmb\beta}(0)+1),\quad |\pmb\beta|=-2.
$$
As a consequence of the explicit expression~\eqref{LiouvFlat} for the Liouville action $\mathcal S_{\pmb\beta}[\phi]|_{|\pmb\beta|=-2}$ and the formula~\eqref{10:34}  for the functional  $ \mathcal H_{\pmb\beta}[\phi]|_{|\pmb\beta|=-2}$ we get
$$
-\frac 1 {12\pi}\Bigl(\mathcal S_{\pmb\beta}[\phi]-\pi \log \mathcal H_{\pmb\beta}[\phi]\Bigr)\Bigr|_{|\pmb\beta|=-2}= \frac{1}{6}\left( \frac{\beta_1\beta_3}{\beta_1+1} + \frac{\beta_1\beta_3}{\beta_3+1}\right)\log 2 -(\zeta_{\pmb\beta}(0)+1)\log C^2_{\pmb\beta}.
$$
 After some algebra 
 this together with Corollary~\ref{COR} implies the equality~\eqref{DetEuclEnv}.
Moreover, in the case $|\pmb\beta|=-2$ the formula~\eqref{Zeta_0} for $\zeta_{\pmb\beta}(0)$ reduces to the one in~\eqref{Zeta_0_flat}.
\end{proof}

The closed explicit formula~\eqref{DetEuclEnv} for the determinants of Laplacians on the flat triangle envelopes generalizes our previous results in~\cite[Prop. 3.1 and formulae (7.1), (7.2)]
{KalvinJGA}. It is also interesting note that the celebrated partially heuristic Aurell-Salomonson formula~\cite[(50)]{AS2}  returns a result equivalent to the one in Proposition~\ref{ETEnv}. For  details and a rigorous mathematical proof of the Aurell-Salomonson formula we refer to~\cite[Sec.~3.2]{KalvinJFA}.  For the most recent progress towards obtaining a rigorous mathematical proof of a similar Aurel-Salomonson formula for polygons we refer to~\cite{AKR} and references therein.

\subsection{Spherical metrics with two antipodal singularities} \label{SubSpindle}
  A (unique) constant curvature unit area metric $e^{2\varphi}|dz|^2$ on $\overline{\Bbb C}$ representing the divisor  $\pmb \beta= \beta_1\cdot 0+\beta_2\cdot 1+\beta_3\cdot \infty$, where $\beta_j\in(-1,0)$, can be written in the form
\be\label{fin}
e^{2\varphi}|dz|^2= 4  (1+2\pi(|\pmb \beta|+2)|w( z)|^2)^{-2}  |w'(  z)|^2\,|d z|^2,
\ee
where $w( z)$ is the  Schwarz triangle function; see Appendix~\ref{NP}. Here we consider the limit case  $\beta_1=\beta_3=\beta$ as $\beta_2\to 0^-$.  

In the limit we have  $|\pmb\beta|=2\beta>-2$, $w( z)=c_{\pmb\beta} z^{\beta+1}$ with a scaling coefficient  $c_{\pmb\beta}$, and the metric takes the form
$$
e^{2\varphi}|dz|^2= \frac{4    c_{\pmb\beta}^2  (\beta+1)^2 | z|^{2\beta} \,|d z|^2}{(1+4\pi(\beta+1)c^2_{\pmb\beta}| z|^{2\beta+2})^2}.
$$
The change of variable $z\mapsto  z/c_{\pmb\beta}$ shows that  this is the metric of a spindle of Gaussian curvature $K=4\pi(\beta+1)$: a spherical metric with two antipodal conical singularities of order $\beta$ representing the divisor $\beta\cdot 0+\beta\cdot\infty$, see~\cite{TroyanovSp}. 

We apply the M\"obius transformation  $  z\mapsto f(z)=\frac{1+z}{1-z}$ to the metric $e^{2\varphi}|dz|^2$ in order to pass to the metric  $e^{2\phi}|dz|^2$ representing the divisor $\beta_1\cdot(-1)+\beta_2\cdot 0+\beta_3\cdot 1$.  The discussion above shows that  in the limit  $\beta_2\to 0^-$  the resulting metric $e^{2\phi}|dz|^2$ with $\phi=\varphi\circ f+\log|f'|$ and $\beta_1=\beta_3=\beta$ turns into the unit area Gaussian curvature $K=4\pi(\beta+1)$ metric of a spindle. The corresponding  divisor is 
 $
\pmb\beta=\beta\cdot(-1)+\beta\cdot 1$. 

For the coefficients in the asymptotics of the  metric potential near the conical singularities we have 
$$
\begin{aligned}
\phi_1\bigr|_{\beta_1=\beta_2=\beta, \beta_2\to 0^-} =\phi_3\bigr|_{\beta_1=\beta_2=\beta, \beta_2\to 0^-} = -\beta\log 2 +\Phi(\beta,\beta_2,\beta)\bigr|_{\beta_2\to 0^-}
\\
=-\beta\log 2+\frac 1 2 \log\frac {\beta+1} {4\pi},
\end{aligned}
$$
see Proposition~\ref{METRIC}. Hence for the functional $\mathcal H_{\pmb\beta}[\phi]$ in~\eqref{FH} we obtain
\begin{equation}\label{Hspindle}
\log \mathcal H_{\pmb\beta}[\phi]\Bigr|_{\beta_1=\beta_3=\beta,\beta_2\to 0^-}=4\left(\beta+1-\frac{1}{\beta+1}\right)\left( -\beta\log 2+ \frac 1 2 \log\frac {\beta+1} {4\pi}  \right).
\end{equation}

In order to pass to the limit  in the explicit expression for the Liouville action in Theorem~\ref{LAexplInt} as $\beta_2\to 0$,   we use the identities
$$
\zeta_H'\left(-1,\beta-\frac {|\pmb\beta|} 2\right)\bigr|_{\beta_1=\beta_3=\beta}=\zeta_H'\left(-1,1-\frac {\beta_2} 2\right)+\frac {\beta_2} 2\log\left(-\frac {\beta_2} 2 \right),
$$
$$
\zeta_H'(-1,-\beta_2)=\zeta_H'(-1,1-\beta_2)+\beta_2\log(-\beta_2),
$$
$$
\zeta_H'(-1,2+\beta)=\zeta_H'(-1,1+\beta)+(1+\beta)\log(1+\beta)
$$
that easily follow from the definition $\zeta_H(s,\nu)=\sum_{n=0}^\infty (n+\nu)^{-s}$ of the Hurwitz zeta function. 
As a result, the explicit formula~\eqref{ExplLA} for the  Liouville action takes the form 
\begin{equation*}
\begin{aligned}
\frac 1 {4\pi} \mathcal S_{\pmb\beta}[\phi]\bigr|_{\beta_1=\beta_2=\beta, \beta_2\to 0} = \lim_{\beta_2\to 0}\Bigl(-(\beta+1)( 2+ \log\pi)-\left(\beta^2+2\beta \right)\log 2 
\\ +\beta_2\log\left(-\frac {\beta_2} 2 \right)-\beta_2\log(-\beta_2) +(1+\beta)\log(1+\beta)  \Bigr)
\\
= -(\beta+1)( 2+ \log\pi)  -\left({\beta^2+2\beta}  \right)\log 2+(1+\beta)\log (1+\beta).
\end{aligned}
\end{equation*}
This together with~\eqref{Hspindle} implies
\begin{equation}\label{SpindleUA}
\frac 1 {4\pi}\Bigl(\mathcal S_{\pmb\beta}[\phi]-\pi \log \mathcal H_{\pmb\beta}[\phi]\Bigr)\Bigr|_{\beta_1=\beta_3=\beta,\beta_2\to 0^-}
=\frac 1 2 \left(\beta+1+\frac 1 {\beta+1}\right)\log\frac{\beta+1}\pi-2(\beta+1).
\end{equation}

In Theorem~\ref{THMdet} the  anomaly formula for the determinant of Laplacian~\eqref{DetDelta1} together with the definition~\eqref{Cb}  for the function $\mathcal C(\beta)$  give
\begin{equation*}
\begin{aligned}
\log  {\det\Delta}_{\pmb\beta}\Bigr|_{\beta_1=\beta_3=\beta,\beta_2\to 0^-}
=- \frac {\beta+1} 6-\frac 1 {12\pi}\Bigl(\mathcal S_{\pmb\beta}[\phi]-\pi \log \mathcal H_{\pmb\beta}[\phi]\Bigr)\Bigr|_{\beta_1=\beta_3=\beta,\beta_2\to 0}
\\
-4\zeta'_B(0;\beta+1,1,1)+\frac {\beta^2}{3(\beta+1)}\log 2 - \log(\beta+1)
 -\frac 4 3 \log 2  -\log\pi.
 \end{aligned}
\end{equation*}
Taking into accont~\eqref{SpindleUA} we finally arrive at the equality
\begin{equation}\label{DetSpindle}
\begin{aligned}
\log  {\det\Delta}_{\pmb\beta}\Bigr|_{\beta_1=\beta_3=\beta,\beta_2\to 0^-}=\frac {\beta+1} 2 -\frac 1 6 \left( \beta+1+\frac{1}{\beta+1}\right)\log\frac{\beta+1}{{2\pi}} 
\\
-4\zeta_B'(0;\beta+1,1,1) -\log \bigl(4\pi(\beta+1)\bigr).
\end{aligned}
\end{equation}
This is exactly the formula for the determinant of Laplacian on the spindle, cf.~\cite[Prop. 3.1]{KalvinJFA}    (where for the unit area spindle with two antipodal singularities one should take $K_\varphi=4\pi(\beta+1)$ and $\mu=0$), see also~\cite{KalvinCCM,SpreaficoZerbini} and~\cite[Appendix B]{Klevtsov}.

The limit cases $\beta_1=\beta_2=\beta$ as $\beta_3\to 0^-$ and $\beta_2=\beta_3=\beta$ as $\beta_1\to 0^-$ are similar and lead to exactly the same results. We omit the details.

Let us also note that the explicit expressions for the determinant of  Friedrichs  Dirichlet Laplacians on the constant curvature cones~\cite[Sec. 3.3]{KalvinJFA} and, in particular, the one for the flat cones~\cite{Spreafico},  can be independently obtained as a consequence of the equality~\eqref{DetSpindle}; for details we refer to~\cite{KalvinCCM}.

\subsection{Standard round sphere}\label{StRoundSph}

The limit case $\beta_1=\beta_2=\beta_3\to 0^-$ corresponds to the standard round sphere.  
Indeed, in this case the Schwarz triangle function $w(z)$ in~\eqref{fin}  takes the form  $w( z)=c_{\pmb\beta} z$ and thus 
$$
e^{2\varphi}|dz|^2 = \frac{4\,|c_{\pmb\beta}d z|^2}{(1+4\pi  | c_{\pmb\beta} z|^2)^2}.
$$
The change of variable $z\mapsto z/c_{\pmb\beta}$ brings this metric  into the form of standard curvature $4\pi$ unit area metric of a sphere. Therefore the Riemann sphere with the metric $4\pi e^{2\varphi}|dz|^2$ is isometric to the standard round  sphere $x_1^2+x_2^2+x_3^2=1$ in $\Bbb R^3$.  

 Below we show that in the limit $\beta_j\to 0^-$ our  formula for the determinant in Corollary~\ref{COR}   returns the well-known value of the determinant of Laplacian on the standard  round  sphere (of Gaussian curvature one and area $4\pi$). 

Indeed, as a consequence of Theorem~\ref{LAexplInt} we conclude that  
\begin{equation}\label{ActionSphere}
\frac 1 {4\pi} \mathcal S_{\pmb\beta}[\phi]\to -2-\log\pi,\quad \beta_j\to 0^-, \ j=1,2,3.
\end{equation}
For $\beta_j=0$ we also have $\mathcal H_{\pmb\beta}[\phi]=1$ and $\mathcal C(\beta_j)=0$, see~\eqref{FH} and Remark~\ref{RCb}. As a result the explicit formula for $\log  {\det\Delta^S_{\pmb\beta}}$ from Corollary~\ref{COR} takes the form
\begin{equation*}
\begin{aligned}
\log  {\det\Delta^{4\pi}_{\pmb\beta}}\bigr|_{\beta_j\to 0^-}=- \frac {1} 6+\frac {2+\log\pi } {3}
-\left(\frac {2}{6}-1\right)\log(4\pi) -\frac 4 3 \log 2 -4\zeta_R'(-1) -\log\pi
\\
= \frac {1} 2   -4\zeta_R'(-1). 
 \end{aligned}
 \end{equation*}
This is exactly the  LogDet of the Laplacian on the standard round sphere, see e.g.~\cite{OPS}.

\section{Stationary points of determinant}\label{SecSPoints}

In this section we study stationary points of the determinant, deduce explicit formulas for its  second derivatives, and, in particular, prove Theorem~\ref{TSP}.

\begin{proposition}\label{StatPoints}[Stationary points] Consider the determinant of Laplacian on the constant curvature  metrics
representing the divisor
$$
\pmb\beta=\beta_1\cdot(-1)+\beta_2\cdot 0+\beta_3\cdot 1,\quad \beta_j\in(-1,0),
$$
of fixed degree $|\pmb\beta|=\beta_1+\beta_2+\beta_3$. Then the point   $(\beta_1,\beta_2,\beta_3)$ with  $\beta_1=\beta_2=\beta_3=\frac {|\pmb\beta|} 3$ is a stationary point of the function 
$
(\beta_1,\beta_2, \beta_3)\mapsto \log\det\Delta_{\pmb\beta}^S$.

Note that the Gauss-Bonnet theorem implies $|\pmb\beta|=\frac {SK}{2\pi}-2$, where $S$ is the surface area and $K$ is the Gaussian curvature.
\end{proposition}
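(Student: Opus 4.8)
The plan is to exploit the symmetry of the problem together with the explicit formula for the determinant in Corollary~\ref{COR}. First I would work on the constraint surface $\{\beta_1+\beta_2+\beta_3=|\pmb\beta|\}$, parametrizing it by two independent variables, say $u=\beta_1-\beta_2$ and $v=\beta_2-\beta_3$, so that the symmetric point corresponds to $u=v=0$. The key structural observation is that the group $\mathfrak S_3$ of permutations of the marked points $p_j=-1,0,1$ acts on the space of divisors by permuting the $\beta_j$, and the determinant $\det\Delta^S_{\pmb\beta}$ is invariant under this action because permuting the points is realized by M\"obius transformations of $\overline{\Bbb C}$ (the three transpositions of $\{-1,0,1\}$ are all M\"obius). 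Hence the function $(\beta_1,\beta_2,\beta_3)\mapsto\log\det\Delta^S_{\pmb\beta}$, restricted to the constraint surface, is invariant under the induced $\mathfrak S_3$-action, which fixes only the point $\beta_1=\beta_2=\beta_3=|\pmb\beta|/3$. A smooth function invariant under a finite group action has vanishing gradient at any fixed point of that action whose isotropy acts without nonzero invariant vectors on the tangent space; since $\mathfrak S_3$ acts on the two-dimensional tangent space of the constraint surface by its standard (irreducible, non-trivial) representation, there are no nonzero invariant tangent vectors, so the gradient must vanish there.

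More concretely, and avoiding any appeal to representation theory, I would use a single transposition: the reflection $z\mapsto -z$ is a M\"obius transformation fixing $p_2=0$ and swapping $p_1=-1$ with $p_3=1$, hence $\det\Delta^S_{\beta_1,\beta_2,\beta_3}=\det\Delta^S_{\beta_3,\beta_2,\beta_1}$. Differentiating this identity in $\beta_1$ at a point where $\beta_1=\beta_3$ gives $\partial_{\beta_1}\log\det\Delta^S=\partial_{\beta_3}\log\det\Delta^S$ there. A second transposition — e.g. the M\"obius map sending $(-1,0,1)$ to $(0,1,-1)$ composed appropriately, or more simply the map $z\mapsto (z-1)/(z+1)$-type fractional linear transformation realizing another transposition of $\{-1,0,1\}$ — yields, after differentiating at the fully symmetric point, a second relation among the three partial derivatives. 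Together with the constraint (which forces the three partials, as a covector on the constraint surface, to be considered modulo the all-ones vector), the two relations $\partial_{\beta_1}=\partial_{\beta_2}=\partial_{\beta_3}$ at $\beta_1=\beta_2=\beta_3$ force the constrained gradient to vanish. This establishes that~\eqref{SPI} is a stationary point of~\eqref{DasFA}.

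Alternatively — and this is probably the route the paper takes, given that it has just proved the governing equations and the explicit formula — one can argue directly from Corollary~\ref{COR} and Lemma~\ref{ZZ}. Using~\eqref{GovEqn} one has $\partial_{\beta_j}\mathcal S_{\pmb\beta}[\phi]=4\pi\phi_j-2\pi$, and from the explicit expression~\eqref{phi123}--\eqref{Phi} for $\phi_j$ together with~\eqref{FH},~\eqref{Cb} one writes $\partial_{\beta_j}\log\det\Delta^S_{\pmb\beta}$ as an explicit function of $(\beta_1,\beta_2,\beta_3)$. The constrained stationarity condition is that the three quantities $\partial_{\beta_j}\log\det\Delta^S_{\pmb\beta}$ coincide. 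One then checks by inspection of the formula that, at $\beta_1=\beta_2=\beta_3=|\pmb\beta|/3$, each term that depends on the individual $\beta_j$ (the Hurwitz-zeta derivatives $\zeta_H'(-1,\cdot)$, the $\mathcal C(\beta_j)$ terms, the $\log\mathcal H_{\pmb\beta}[\phi]$ contribution, and the $\phi_j$ coming from~\eqref{mpotpj} specialized to $p_j=-1,0,1$) enters symmetrically, so the three partials agree. The main obstacle is purely bookkeeping: one must verify that the $p_j$-dependent prefactors $(\beta_j+1)\log|\,\cdot\,|$ in~\eqref{mpotpj}, which break the naive symmetry because $|p_1-p_2|=1$, $|p_2-p_3|=1$, $|p_1-p_3|=2$ are \emph{not} all equal, nonetheless contribute equally to all three partial derivatives at the symmetric point (they do, because the determinant itself is M\"obius-invariant, so these anisotropic terms must cancel in the appropriate combination — which is exactly the content of the last paragraph of Remark~\ref{GLA}). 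I would handle this by invoking that M\"obius invariance rather than re-deriving the cancellation by hand, thereby reducing the computation to the manifestly $\mathfrak S_3$-symmetric classical Liouville action $\mathcal S^{(cl)}$ and the symmetric terms, for which equality of the three partials at $\beta_1=\beta_2=\beta_3$ is immediate.
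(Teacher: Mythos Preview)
Your symmetry argument is correct and is genuinely different from the paper's proof. The paper does not invoke M\"obius invariance or the $\mathfrak S_3$-action at all; instead it carries out precisely the direct computation you sketch as your ``alternative'' route. Concretely, the paper sets $\beta_2=|\pmb\beta|-\beta_1-\beta_3$, uses the anomaly formula~\eqref{DetDelta} together with the governing equations~\eqref{GovEqn} to write the constrained $\beta_\ell$-derivative as
\[
-\tfrac13(\phi_\ell-\phi_2)+\tfrac1{12}\,\partial_{\beta_\ell}\log\mathcal H_{\pmb\beta}[\phi]-\mathcal C'(\beta_\ell)+\mathcal C'(\beta_2),
\]
and then evaluates each piece at $\beta_1=\beta_2=\beta_3=|\pmb\beta|/3$ using~\eqref{phi123},~\eqref{Phi}: the $\mathcal C'$ terms cancel trivially, while $\phi_\ell-\phi_2=-\tfrac{2}{3}(|\pmb\beta|+3)\log 2$ and $\partial_{\beta_\ell}\log\mathcal H_{\pmb\beta}[\phi]=-\tfrac{8}{3}(|\pmb\beta|+3)\log 2$, so the first two terms also cancel. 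The general-$S$ case is then handled via the rescaling property and the observation that $\partial_{\beta_\ell}\zeta_{\pmb\beta}(0)$ also vanishes at the symmetric point.

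Your approach is cleaner and entirely bypasses the bookkeeping with the anisotropic $\log 2$ terms that you correctly identify as the only nuisance. The paper's approach, on the other hand, has the practical advantage that the same explicit first-derivative expressions feed directly into the computation of the Hessian in Proposition~\ref{SecDer}, where the symmetry argument alone would not suffice.
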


\begin{proof} Let us first consider only the metrics of unit area, i.e.  we assume that $S=1$ and  ${ K}= {2\pi}(|\pmb\beta|+2)$ with a fixed $|\pmb\beta|$. Without loss of generality  we can  set $\beta_2=|\pmb\beta|-\beta_1-\beta_3$  and  consider the determinant as a function of two variables:  $\beta_1$ and $\beta_3$.  Thus we need to show that 
 the equations 
\be\label{STPb}
\partial_{\beta_\ell} \left(\log\det\Delta_{\pmb\beta}\Bigr|_{\beta_2=|\pmb\beta|-\beta_1-\beta_3}\right)=0,\quad \ell=1,3,
\ee
are satisfied if $\beta_1=\beta_3=\frac {|\pmb\beta|} 3$. Recall that  in the case  $S=1$ we denote the Laplacian $\Delta^S_{\pmb\beta}$ by $\Delta_{\pmb\beta}$.

As a consequence of the formula~\eqref{DetDelta} for $\log\det\Delta_{\pmb\beta}$ and  the governing equations~\eqref{GovEqn} for the Liouville action, the equations~\eqref{STPb} 
can equivalently be written in the form
\begin{equation}\label{CPoint}
\begin{aligned}
-\frac 1 3 (\phi_\ell-\phi_2)\Bigr|_{\beta_2=|\pmb\beta|-\beta_1-\beta_3}   
+&\frac 1 {12}     \partial_{\beta_\ell}\left(\log\mathcal H_{\pmb\beta}[\phi]\Bigr|_{\beta_2=|\pmb\beta|-\beta_1-\beta_3}\right)
\\
 &-\mathcal C'(\beta_\ell)+\mathcal C'\left(|\pmb\beta|-\beta_1-\beta_3\right)=0,\quad \ell=1,3.
  \end{aligned}
\end{equation}
Here $\phi_j$ are the functions found in~\eqref{phi123},~\eqref{Phi}, the functional $\mathcal H_{\pmb\beta}[\phi]$ is defined in~\eqref{FH}, and    $\mathcal C(\beta)$ is the same as in~\eqref{Cb}.

It is not hard to verify  that  for $\beta_1=\beta_3=\frac {|\pmb\beta|}3$  and  $\ell=1,3$ we have
$$
-\mathcal C'(\beta_\ell)+\mathcal C'\left( |\pmb\beta|-\beta_1-\beta_3\right)=-\mathcal C'\left(\frac {|\pmb\beta|}3\right)+\mathcal C'\left(\frac {|\pmb\beta|}3\right)=0,
$$
$$
\phi_\ell-\phi_2= -2\frac{|\pmb\beta|+3}{3}\log 2,\quad \partial_{\beta_\ell}\left(\log\mathcal H_{\pmb\beta}[\phi]\Bigr|_{\beta_2=|\pmb\beta|-\beta_1-\beta_3}\right)=  -8 \frac{|\pmb\beta|+3}{3}\log 2.
$$
Therefore the equations~\eqref{CPoint}, or, equivalently, the equations~\eqref{STPb} are satisfied. This completes the proof for the metrics  $m_{\pmb\beta}$ of unit area.

Multiplying $m_{\pmb\beta}$  by $S$ one obtains the metric  $S\cdot m_{\pmb\beta}$ of area $S$ and Gaussian curvature $K=2\pi(|\pmb\beta|+2)/S$.  The corresponding determinant of Laplacian is related to $\det\Delta_{\pmb\beta}$ by the standard rescaling property, see Remark~\ref{Rescaling}.  In the case $\beta_j=\frac{|\pmb\beta|}3 =\frac {SK}{6\pi}-\frac 2 3$  for the value of the spectral zeta function at zero we have   
 $$
 \partial_{\beta_\ell}\left(\zeta_{\pmb\beta}(0)\Bigr|_{\beta_2=|\pmb\beta|-\beta_1-\beta_3}\right)=0,\quad\ell=1,3;
 $$
cf.~\eqref{Zeta_0}. This together with the rescaling property~\eqref{rescaling} immediately implies that the point $(\beta_1,\beta_2,\beta_3)$ with $\beta_1=\beta_2=\beta_3=\frac{|\pmb\beta|}3=\frac {SK}{6\pi}-\frac 2 3$ is a stationary point of the function $(\beta_1,\beta_2, \beta_3)\mapsto \log\det\Delta_{\pmb\beta}^S$ on the constant curvature metrics with a fixed value of $|\pmb\beta|$. 
\end{proof}

\begin{proposition}[Second derivatives]\label{SecDer}  As in Proposition~\ref{StatPoints}  above, consider the determinant of Laplacian on the constant curvature  metrics 
representing the divisor
$$
\pmb\beta=\beta_1\cdot(-1)+\beta_2\cdot 0+\beta_3\cdot 1,\quad \beta_j\in(-1,0),
$$
of fixed degree $|\pmb\beta|=\beta_1+\beta_1+
\beta_3$. For the second derivatives of  $\log\det\Delta^S_{\pmb\beta}$ evaluated at the stationary point $\beta_j=\frac {|\pmb\beta|} 3=\frac {SK}{6\pi}-\frac 2 3$ we have 
$$
\begin{aligned}
&\partial^2_{\beta_1} \left(\log\det\Delta^S_{\pmb\beta}\Bigr|_{\beta_2=|\pmb\beta|-\beta_1-\beta_3}\right)\Bigr|_{\beta_j=\frac {|\pmb\beta|}{3}} =\partial^2_{\beta_3} \left(\log\det\Delta^S_{\pmb\beta}\Bigr|_{\beta_2=|\pmb\beta|-\beta_1-\beta_3}\right)\Bigr|_{\beta_j=\frac {|\pmb\beta|}{3}} 
\\
&=
-9\left({|\pmb\beta|+3}\right)^{-3}( 2\log 2+\log S +2\Phi \Bigr|_{\beta_j=\frac {|\pmb\beta|}{3}}   )    +  6 \left( {|\pmb\beta|+3}\right)^{-2}    (\partial_{\beta_1}\Phi      -\partial_{\beta_2}\Phi)\Bigr|_{\beta_j=\frac {|\pmb\beta|}{3}} 
\\
&\qquad+\frac 1 3 \left( \frac {|\pmb\beta|+3}{3}-\frac {3}{|\pmb\beta|+3}\right)  ( \partial^2_{\beta_1}\Phi+\partial^2_{\beta_2}\Phi+\partial^2_{\beta_3}\Phi )\Bigr|_{\beta_j=\frac {|\pmb\beta|}{3}} -2\mathcal C''\left(\frac {|\pmb\beta|}{3}\right)
\end{aligned}
$$
and 
$$
\begin{aligned}
&\partial_{\beta_3}\partial_{\beta_1} \left(\log\det\Delta^S_{\pmb\beta}\Bigr|_{\beta_2=|\pmb\beta|-\beta_1-\beta_3}\right)\Bigr|_{\beta_j=\frac {|\pmb\beta|}{3}} 
\\
& =
- \frac 9 2 \left({|\pmb\beta|}+3\right)^{-3}(2\log 2+\log S +2\Phi\Bigr|_{\beta_j=\frac {|\pmb\beta|}{3}} )
+3\left( {|\pmb\beta|}+{3}\right)^{-2} (\partial_{\beta_1}\Phi-\partial_{\beta_2}\Phi)\Bigr|_{\beta_j=\frac {|\pmb\beta|}{3}}
\\
&\qquad+\frac 1 6\left( \frac {|\pmb\beta|+3}{3}-\frac 3 {|\pmb\beta|+3}\right)( \partial^2_{\beta_1}\Phi+\partial^2_{\beta_2}\Phi+\partial^2_{\beta_3}\Phi )\Bigr|_{\beta_j=\frac {|\pmb\beta|}{3}}-\mathcal C''\left(\frac {|\pmb\beta|}{3}\right).
\end{aligned}
$$
Here $\Phi$ is the  function found in~\eqref{Phi}, and the function $\mathcal C$ is defined in~\eqref{Cb}.
\end{proposition}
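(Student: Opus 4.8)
The plan is to differentiate the closed formula of Corollary~\ref{COR} twice along the plane $\{\beta_1+\beta_2+\beta_3=|\pmb\beta|\}$ and then set $\beta_1=\beta_2=\beta_3=|\pmb\beta|/3$. I would use $(\beta_1,\beta_3)$ as coordinates on that plane, write $\beta_2=|\pmb\beta|-\beta_1-\beta_3$, and denote by $D_\ell$ ($\ell\in\{1,3\}$) the induced total derivative; acting on a function of the unconstrained variables $\beta_1,\beta_2,\beta_3$ one has $D_\ell=\partial_{\beta_\ell}-\partial_{\beta_2}$, so that $D_k\beta_\ell=\delta_{k\ell}$ and $D_k\beta_2=-1$. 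The starting point is the first-order identity already isolated in the proof of Proposition~\ref{StatPoints}: feeding the governing equations~\eqref{GovEqn} (which give $D_\ell\mathcal S_{\pmb\beta}[\phi]=4\pi(\phi_\ell-\phi_2)$) and the explicit $\zeta_{\pmb\beta}(0)$ from~\eqref{Zeta_0} into Corollary~\ref{COR} yields
\begin{equation*}
D_\ell\log\det\Delta^S_{\pmb\beta}=-\tfrac13(\phi_\ell-\phi_2)+\tfrac1{12}D_\ell\log\mathcal H_{\pmb\beta}[\phi]-(\mathcal C'(\beta_\ell)-\mathcal C'(\beta_2))+\tfrac1{12}(g'(\beta_\ell)-g'(\beta_2))\log S,
\end{equation*}
where $g(\beta)=\beta+1-\tfrac1{\beta+1}$; this is~\eqref{CPoint} supplemented by the rescaling term of Remark~\ref{Rescaling}.

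Applying $D_k$ once more, the $\mathcal C$- and $\log S$-terms differentiate immediately: since $D_k\beta_\ell=\delta_{k\ell}$ and $D_k\beta_2=-1$ they give $-\mathcal C''(\beta_\ell)\delta_{k\ell}-\mathcal C''(\beta_2)$ and $\tfrac1{12}(g''(\beta_\ell)\delta_{k\ell}+g''(\beta_2))\log S$. Evaluating at $\beta_j=|\pmb\beta|/3$, where $\beta_j+1=(|\pmb\beta|+3)/3$ and $g''(\beta)=-2(\beta+1)^{-3}$ so that $g''(|\pmb\beta|/3)=-54(|\pmb\beta|+3)^{-3}$, these produce the terms $-2\mathcal C''(|\pmb\beta|/3)$, $-\mathcal C''(|\pmb\beta|/3)$ and the $\log S$-contributions $-9(|\pmb\beta|+3)^{-3}\log S$, $-\tfrac92(|\pmb\beta|+3)^{-3}\log S$ of the two displayed formulas. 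The remaining work is the term $-\tfrac13 D_k(\phi_\ell-\phi_2)+\tfrac1{12}D_kD_\ell\log\mathcal H_{\pmb\beta}[\phi]$. Writing $\log\mathcal H_{\pmb\beta}[\phi]=2\sum_j g(\beta_j)\phi_j$ and expanding $D_kD_\ell$ by the Leibniz rule decomposes this into three groups: both derivatives landing on a factor $g(\beta_j)$, producing $\tfrac16\sum_j g''(\beta_j)(D_k\beta_j)(D_\ell\beta_j)\phi_j$ (which, after inserting $g''(|\pmb\beta|/3)$ and the explicit value of $\phi_j$ at the symmetric point, supplies the $2\log 2+2\Phi$ part of the $(|\pmb\beta|+3)^{-3}$-term); one derivative on $g(\beta_j)$ and one on $\phi_j$, producing $g'(\beta_j)$ times first $D$-derivatives of $\phi_j$; and both derivatives on $\phi_j$, producing $\tfrac16\sum_j g(\beta_j)D_kD_\ell\phi_j$.

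Into these I would substitute the explicit coefficients from Proposition~\ref{METRIC}: by~\eqref{phi123},~\eqref{Phi}, each $\phi_j$ equals an affine function of $\beta_j$ times $\log 2$ plus $\Phi$ evaluated with $\beta_j$ in its distinguished slot, where $\Phi$ is symmetric in its two remaining slots and every $\Gamma$-argument in~\eqref{Phi} is affine in $\beta_1,\beta_2,\beta_3$ (so that its second $D$-derivative vanishes). At the symmetric point all three $\phi_j$ reduce to $\Phi|_{\beta_j=|\pmb\beta|/3}$ up to explicit $\log 2$'s; using $g'(|\pmb\beta|/3)=1+9(|\pmb\beta|+3)^{-2}$ one finds that the ``one derivative on each'' group together with $-\tfrac13 D_k(\phi_\ell-\phi_2)$ contributes only first derivatives of $\Phi$ and, after the cancellation of the $(|\pmb\beta|+3)^{0}$ pieces (the same cancellation responsible for the stationarity in Proposition~\ref{StatPoints}), collapses to $6(|\pmb\beta|+3)^{-2}(\partial_{\beta_1}\Phi-\partial_{\beta_2}\Phi)$ for $k=\ell$ and to half of it for $k\neq\ell$. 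The ``both on $\phi$'' group equals $\tfrac16 g(|\pmb\beta|/3)\sum_j D_kD_\ell\phi_j$ at the symmetric point, and evaluating the Hessian of $\Phi$ there and exploiting the symmetry of $\Phi$ in its last two slots together with the residual symmetry $\beta_1\leftrightarrow\beta_3$ of the configuration reorganizes $\sum_j D_kD_\ell\phi_j$ into the symmetric combination $\partial^2_{\beta_1}\Phi+\partial^2_{\beta_2}\Phi+\partial^2_{\beta_3}\Phi$, the diagonal case $k=\ell$ carrying twice the weight of the off-diagonal case; since $g(|\pmb\beta|/3)=\tfrac{|\pmb\beta|+3}{3}-\tfrac3{|\pmb\beta|+3}$ this yields the last stated term. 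Collecting everything gives the two formulas, and the $\beta_1\leftrightarrow\beta_3$ invariance of $\log\det\Delta^S_{\pmb\beta}$ gives the first equality for free.

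The main obstacle is precisely the bookkeeping in the last two steps: one must keep accurate track of which $\beta_i$ occupies which slot of $\Phi$ inside each $\phi_j$, carry the constrained derivatives $D_\ell=\partial_{\beta_\ell}-\partial_{\beta_2}$ through the full Leibniz expansion with $\beta_2$ depending on $\beta_1$ and $\beta_3$, and --- the one point that is not purely mechanical --- check that the assorted gradient- and Hessian-of-$\Phi$ contributions really collapse onto the symmetric combinations $\partial_{\beta_1}\Phi-\partial_{\beta_2}\Phi$ and $\partial^2_{\beta_1}\Phi+\partial^2_{\beta_2}\Phi+\partial^2_{\beta_3}\Phi$, with the correct diagonal-versus-off-diagonal weights. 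This uses the symmetry of $\Phi$ in its last two slots, the affine dependence of the $\Gamma$-arguments in~\eqref{Phi} on $\beta_1,\beta_2,\beta_3$, and elementary digamma identities; it requires no ingredient beyond Theorem~\ref{THMdet}, Lemma~\ref{ZZ}, and Proposition~\ref{METRIC}.
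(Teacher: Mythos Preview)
Your approach is essentially the same as the paper's: the paper's proof simply states that the formulas follow by differentiating \eqref{rescaling}, \eqref{Zeta_0}, and \eqref{DetDelta} using the governing equations \eqref{GovEqn} and the definition \eqref{FH}, then omits all details as ``tedious but straightforward.'' Your proposal carries out exactly this computation, only with considerably more detail than the paper provides --- in particular, your explicit identification of the constrained derivative $D_\ell=\partial_{\beta_\ell}-\partial_{\beta_2}$, the Leibniz expansion of $D_kD_\ell\log\mathcal H_{\pmb\beta}[\phi]$, and the tracking of which slot of $\Phi$ each $\beta_i$ occupies inside each $\phi_j$ are precisely the bookkeeping the paper suppresses.
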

\begin{proof}  It is a bit tedious but straightforward  to derive  the explicit formulas for the second order derivatives. One  need only  use  the formulae~\eqref{rescaling},~\eqref{Zeta_0},  and~\eqref{DetDelta} for $\log\det\Delta^S_{\pmb\beta}$, the governing equations~\eqref{GovEqn} for the Liouville action, and the definition~\eqref{FH} of the functional $\mathcal H_{\pmb\beta}[\phi]$. We omit the details.
\end{proof}

\begin{corollary}\label{CSD} If the surface area $S$ is sufficiently small, then the stationary point in Proposition~\ref{StatPoints} is a minimum.

 More precisely,  for each $|\pmb\beta|\in(-3,0)$ there exists a number $S_0=S_0(|\pmb\beta|)$ such that for any $S\in(0,S_0]$ 
 the stationary point $\beta_1=\beta_2=\beta_3=\frac {|\pmb\beta|} 3$ is a minimum of the function~\eqref{DasFA}.
\end{corollary}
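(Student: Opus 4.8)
The plan is to show that the Hessian of $\log\det\Delta^S_{\pmb\beta}$ at the symmetric point $\beta_1=\beta_2=\beta_3=|\pmb\beta|/3$, computed in Proposition~\ref{SecDer}, is positive definite once $S$ is small enough. Since the stationarity is already established in Proposition~\ref{StatPoints}, it suffices to examine the $2\times 2$ matrix of second derivatives in the variables $\beta_1,\beta_3$ (with $\beta_2=|\pmb\beta|-\beta_1-\beta_3$). Writing $a:=\partial^2_{\beta_1}(\,\cdot\,)=\partial^2_{\beta_3}(\,\cdot\,)$ and $b:=\partial_{\beta_1}\partial_{\beta_3}(\,\cdot\,)$ for the entries (they coincide on the diagonal by the $\beta_1\leftrightarrow\beta_3$ symmetry), positive definiteness is equivalent to $a>0$ and $a>b$, i.e. $a-b>0$ and $a+b>0$. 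The key structural observation from the formulas in Proposition~\ref{SecDer} is that $a$ and $b$ share the same $\Phi$- and $\mathcal C$-dependent pieces but with different numerical coefficients, and in particular
\begin{equation*}
a-b=-\frac{9}{2}\,(|\pmb\beta|+3)^{-3}\bigl(2\log 2+\log S+2\Phi\bigr)+3(|\pmb\beta|+3)^{-2}(\partial_{\beta_1}\Phi-\partial_{\beta_2}\Phi)+\frac16\Bigl(\tfrac{|\pmb\beta|+3}{3}-\tfrac{3}{|\pmb\beta|+3}\Bigr)\bigl(\partial^2_{\beta_1}\Phi+\partial^2_{\beta_2}\Phi+\partial^2_{\beta_3}\Phi\bigr)-\mathcal C''\Bigl(\tfrac{|\pmb\beta|}{3}\Bigr),
\end{equation*}
all evaluated at $\beta_j=|\pmb\beta|/3$, and similarly $a+b$ is $a-b$ with the leading $\log S$ coefficient replaced by $-\frac{27}{2}(|\pmb\beta|+3)^{-3}$ times the same bracket, plus the remaining $\Phi''$ and $\mathcal C''$ terms doubled. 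The crucial point is the sign and size of the coefficient multiplying $\log S$: for $|\pmb\beta|\in(-3,0)$ we have $|\pmb\beta|+3\in(0,3)$, so $-(|\pmb\beta|+3)^{-3}<0$, and as $S\to 0^+$ the term $-c(|\pmb\beta|)\log S$ with $c(|\pmb\beta|)>0$ tends to $+\infty$.

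Concretely, first I would fix $|\pmb\beta|\in(-3,0)$ and set $\beta_j=|\pmb\beta|/3$ throughout; all the quantities $\Phi$, $\partial_{\beta_i}\Phi$, $\partial^2_{\beta_i}\Phi$ and $\mathcal C''(|\pmb\beta|/3)$ are then finite real constants depending only on $|\pmb\beta|$ (the function $\Phi$ is real-analytic on the relevant domain by its explicit formula~\eqref{Phi} in Appendix~\ref{NP}, and $\mathcal C$ is real-analytic on $(-1,\infty)$ by Remark~\ref{RCb}; one should just check that $|\pmb\beta|/3\in(-1,0)$ and $|\pmb\beta|/3-|\pmb\beta|/2=-|\pmb\beta|/6>0$ so the point lies in the admissible region). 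Denote by $A(|\pmb\beta|)$ and $B(|\pmb\beta|)$ the $S$-independent parts of $a-b$ and $a+b$ respectively. Then
\begin{equation*}
a-b=-\tfrac{9}{2}(|\pmb\beta|+3)^{-3}\log S+A(|\pmb\beta|),\qquad a+b=-\tfrac{27}{2}(|\pmb\beta|+3)^{-3}\log S+B(|\pmb\beta|),
\end{equation*}
and since $(|\pmb\beta|+3)^{-3}>0$, both right-hand sides are strictly increasing functions of $1/S$; hence there is $S_0=S_0(|\pmb\beta|)>0$ such that $a-b>0$ and $a+b>0$ for all $S\in(0,S_0]$. Taking such an $S$, the Hessian $\begin{psmallmatrix} a & b\\ b & a\end{psmallmatrix}$ has eigenvalues $a\pm b$, both positive, so it is positive definite and the stationary point is a strict local minimum of $(\beta_1,\beta_2,\beta_3)\mapsto\log\det\Delta^S_{\pmb\beta}$ restricted to the fixed-$|\pmb\beta|$ surface. (If one prefers to avoid the \texttt{psmallmatrix} macro, simply note $a>0$ and $a^2-b^2=(a-b)(a+b)>0$, which is the Sylvester criterion for the $2\times 2$ matrix.) Finally, one remarks that a local minimum in the two free variables is what the theorem claims, since the third variable is determined by $\beta_2=|\pmb\beta|-\beta_1-\beta_3$.

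The main obstacle I anticipate is not any single step but rather the bookkeeping needed to be sure that the coefficient of $\log S$ in $a-b$ (and in $a+b$) is exactly $-\tfrac{9}{2}(|\pmb\beta|+3)^{-3}$ (resp. $-\tfrac{27}{2}(|\pmb\beta|+3)^{-3}$) and in particular is \emph{nonzero} with the correct sign: if it vanished for some $|\pmb\beta|$ the argument would collapse. From the displayed formulas in Proposition~\ref{SecDer} this coefficient comes solely from the $-9(|\pmb\beta|+3)^{-3}\log S$ term in $\partial^2_{\beta_1}$ and the $-\tfrac92(|\pmb\beta|+3)^{-3}\log S$ term in $\partial_{\beta_1}\partial_{\beta_3}$, so $a-b$ picks up $-9+\tfrac92=-\tfrac92$ times $(|\pmb\beta|+3)^{-3}$ and $a+b$ picks up $-9-\tfrac92=-\tfrac{27}{2}$ times it; both are strictly negative for every $|\pmb\beta|\in(-3,0)$, which is precisely what makes $-(\text{coeff})\log S\to+\infty$ as $S\to0^+$. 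The only remaining subtlety is that $S_0$ genuinely depends on $|\pmb\beta|$ through $A(|\pmb\beta|)$, $B(|\pmb\beta|)$ and $(|\pmb\beta|+3)^{-3}$, exactly as the statement of the corollary allows, so no uniformity in $|\pmb\beta|$ needs to be proven.
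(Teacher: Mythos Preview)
Your proposal is correct and follows essentially the same approach as the paper: use the explicit second-derivative formulas from Proposition~\ref{SecDer} and observe that the $-c(|\pmb\beta|)\log S$ term (with $c(|\pmb\beta|)>0$ for $|\pmb\beta|\in(-3,0)$) forces the Hessian to be positive definite once $S$ is small. In fact the formulas give $a=2b$ exactly, so your conditions $a\pm b>0$ and the paper's condition $a>b>0$ both reduce to the single inequality $b>0$, which is what the $\log S$ argument establishes.
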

\begin{proof} From the formulas for the second order derivatives in Proposition~\ref{SecDer} it is easy to see that for each $|\pmb\beta|=\frac{SK}{2\pi}-2\in(-3,0)$ there exists $S_0=S_0(|\pmb\beta|)$ such that for any $S\in(0, S_0]$ we have
$$
\begin{aligned}
\partial^2_{\beta_1} \left(\log\det\Delta^S_{\pmb\beta}\Bigr|_{\beta_2=|\pmb\beta|-\beta_1-\beta_3}\right)\Bigr|_{\beta_j=\frac {|\pmb\beta|}{3}}
>\partial_{\beta_3}\partial_{\beta_1} \left(\log\det\Delta^S_{\pmb\beta}\Bigr|_{\beta_2=|\pmb\beta|-\beta_1-\beta_3}\right)\Bigr|_{\beta_j=\frac {|\pmb\beta|}{3}}
>0.
\end{aligned}
$$
These inequalities imply that the stationary point is a minimum. 
\end{proof}
\begin{proof}[Proof of Theorem~\ref{TSP}] The assertion is an immediate consequence of Proposition~\ref{StatPoints} and Corollary~\ref{CSD}.
\end{proof}

\begin{remark} In the  particularly simple case of the flat metrics (i.e. when $K=0$ and $|\pmb\beta|=-2$) it is relatively easy to clarify  what happens with the stationary point as the area increases: When the area $S$ is  below a certain value (approximately $1.92$),  the stationary point  $(\beta_1,\beta_2,\beta_3)$ with $\beta_j=-\frac 2 3$ is a minimum. When the area $S$  exceeds this value,  it is a maximum; cf.~\cite{KalvinJGA}. For instance, the Calabi-Croke sphere (or, equivalently, the unit side equilateral triangle envelope)  minimizes the determinant on the flat metrics (with three conical singularities) of  area $S=\sqrt{3}/2$.   
\end{remark}

\noindent{ \bf Acknowledgements} The author would like to thank Paul Wiegmann for numerous discussions.

\appendix
 \section{Appendix: Explicit solution to  Nirenberg problem}\label{NP}
 
  In this appendix  we  solve the  singular Nirenberg problem: We explicitly construct the conformal metric $e^{2\phi}|dz|^2$ with three conical singularities on the Riemann sphere,  given its Gaussian curvature $K=2\pi(|\pmb\beta|+2)$ and the orders $\beta_j\in(-1,0)$ of conical singularities.   In particular, we obtain explicit expressions for the coefficients $\phi_j$ in the asymptotics~\eqref{ASphi}  of the metric potential $\phi$ in terms of of the orders $\beta_j$ of conical singularities. We also to show that the coefficient $\phi_\infty$ in the asymptotics of $\phi$  at infinity is well-defined.

\subsection{Existence and uniqueness} \label{EandU}
\begin{lemma}\label{PotEU} Let $p_j\in\Bbb C$ be three distinct marked points. Assume that $\beta_j\in(-1,0)$ and $\beta_j-\frac{|\pmb\beta|}2>0$ for  $j=1,2,3$.  Then there exists a unique  solution $\phi$ to the Liouville equation
 \be\label{LAapp}
 e^{-2\phi}(-4\partial_z\partial_{\bar z}\phi)=2\pi(|\pmb\beta|+2),\quad z\in\Bbb C\setminus\{p_1,p_2,p_3\},
\ee
having the  asymptotics 
$$
\begin{aligned}
\phi(z) & =\beta_j\log|z-p_j|+\phi_j+o(1), \quad z\to p_j,
\\
 \phi(z)& =-2\log|z| +\phi_\infty+o(1), \quad z\to \infty,
\end{aligned}
$$
with some coefficients $\phi_j$ and $\phi_\infty$, and   satisfying  the unit area condition 
$$
\int_{\Bbb C} e^{2\phi}\frac {dz\wedge d\bar z }{-2i}=1.
$$  

In other words,    there exists a unique  unit area constant  curvature  conformal metric with three distinct conical singularities of order $\beta_j$.
\end{lemma}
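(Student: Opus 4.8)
The plan is to normalize the marked points and then establish existence and uniqueness separately, splitting the uniqueness argument according to the sign of the Gaussian curvature $K=2\pi(|\pmb\beta|+2)$. The Liouville equation~\eqref{LAapp}, the asymptotics~\eqref{ASphi}, and the unit area constraint are all covariant under M\"obius changes of the variable $z$: a M\"obius map carries a solution for a triple $(p_1,p_2,p_3)$ to a solution for the image triple (the extra term $\log|f'|$ only shifts the coefficients $\phi_j$ and $\phi_\infty$), so it suffices to prove the statement for $p_1=-1$, $p_2=0$, $p_3=1$.

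\emph{Existence.} I would exhibit the solution explicitly. When $|\pmb\beta|=-2$ one checks directly that $\phi(z)=\log C_{\pmb\beta}+\sum_{j=1}^3\beta_j\log|z-p_j|$ is harmonic off the $p_j$ (so~\eqref{LAapp} holds, its right-hand side being $0$), has the asymptotics~\eqref{ASphi} --- the behaviour $-2\log|z|+\phi_\infty+o(1)$ at infinity is forced by $\beta_1+\beta_2+\beta_3=-2$ --- and that the constant $C_{\pmb\beta}$ can be chosen uniquely so that $\int_{\Bbb C}e^{2\phi}\,\frac{dz\wedge d\bar z}{-2i}=1$, finiteness of this integral being exactly the condition $\beta_j>-1$. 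When $|\pmb\beta|\ne-2$ I would take the Schwarz triangle function $w=S(z)$, i.e.\ a ratio of two linearly independent solutions of the hypergeometric (Riemann $P$-) equation with regular singular points $p_1,p_2,p_3$ and local exponent differences $\alpha_j:=\beta_j+1\in(0,1)$, and set $e^{2\phi}|dz|^2:=S^{*}\bigl(4(1+K|w|^2)^{-2}|dw|^2\bigr)$; this is the construction carried out with explicit connection coefficients in Subsection~\ref{UACCM} (Proposition~\ref{METRIC}), and existence for every $K$ also follows from Troyanov's theorem~\cite{Troyanov,TroyanovSSC}. The hypothesis $\beta_j-\tfrac{|\pmb\beta|}2>0$ enters here: it is equivalent to the triangle inequalities $\alpha_k+\alpha_\ell-\alpha_j<1$ for $\{j,k,\ell\}=\{1,2,3\}$, which together with $\sum_j\alpha_j>1$ in the spherical case make the monodromy of the hypergeometric equation conjugate into the isometry group of the model metric ($SU(2)$ when $K>0$, $SU(1,1)$ when $K<0$); since that group acts by isometries of $4(1+K|w|^2)^{-2}|dw|^2$, the pulled-back metric is single-valued on $\overline{\Bbb C}\setminus\{p_1,p_2,p_3\}$. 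Conformal covariance of the Gaussian curvature gives~\eqref{LAapp}, the asymptotics~\eqref{ASphi} follow from $S(z)-S(p_j)\asymp(z-p_j)^{\alpha_j}$ (resp.\ $1/S(z)\asymp(z-p_j)^{\alpha_j}$ at a pole of $S$, with $S$ holomorphic and $S'\ne0$ at $\infty$), and the area is automatically $1$ by Gauss--Bonnet, since $K\cdot\mathrm{Area}=4\pi-2\pi\sum_j(1-\alpha_j)=2\pi(|\pmb\beta|+2)=K$.

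\emph{Uniqueness.} Given two solutions $\phi_1,\phi_2$, put $u=\phi_1-\phi_2$. By~\eqref{ASphi} the singular parts at the $p_j$ and at $\infty$ cancel, so $u$ is bounded and continuous on $\overline{\Bbb C}$ and smooth off the $p_j$; subtracting the two Liouville equations $-4\partial_z\partial_{\bar z}\phi_i=K e^{2\phi_i}$ gives $-4\partial_z\partial_{\bar z}u=K e^{2\phi_2}(e^{2u}-1)$. Pairing with $u$ and integrating over $\overline{\Bbb C}$ minus small disks about the $p_j$, then letting the radii tend to $0$ (the boundary contributions vanish by~\eqref{est_phi'}), yields $\int_{\Bbb C}|\nabla u|^2=K\int_{\Bbb C}e^{2\phi_2}\,u(e^{2u}-1)\,\frac{dz\wedge d\bar z}{-2i}$. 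Since $t\mapsto t(e^{2t}-1)\ge0$, vanishing only at $t=0$, the cases $K\le0$ are immediate: if $K<0$ both sides vanish, so $u$ is constant and hence $u\equiv0$; if $K=0$ then $\nabla u\equiv0$, so $u$ is constant and the normalization $\int e^{2\phi_1}=\int e^{2\phi_2}=1$ forces $u\equiv0$ (the maximum principle gives the same conclusions for $K\le0$). The spherical case $K>0$ is not settled by this identity, and it is the main obstacle: there I would invoke the known uniqueness of a spherical metric with three conical singularities of orders $\beta_j\in(-1,0)$ under Troyanov's condition $\beta_j-\tfrac{|\pmb\beta|}2>0$ (see~\cite{Troyanov,LT,Eremenko,UY}), which in the language of the construction above is just the classical fact that a spherical triangle with prescribed angles $\pi\alpha_j$ obeying the triangle inequalities and $\sum_j\alpha_j>1$ is unique up to isometry, equivalently that the $SU(2)$ monodromy of the hypergeometric equation is determined up to conjugation by the exponent differences alone. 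Thus every ingredient except spherical uniqueness is an explicit construction (plus routine local analysis of hypergeometric connection formulae) or a short energy / maximum-principle argument; spherical uniqueness is the only point borrowed from the literature.
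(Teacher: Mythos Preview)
Your argument is correct, but it takes a markedly different route from the paper's own proof. The paper treats Lemma~\ref{PotEU} as a pure citation lemma: for $K<0$ it invokes Picard's classical result~\cite{Picard}, for $K=0$ the flat theory in~\cite{TroyanovSSC}, and for $K>0$ Troyanov's existence theorem~\cite{Troyanov} together with the uniqueness results in~\cite{LT,UY,Eremenko}; the unit area is tied to $K=2\pi(|\pmb\beta|+2)$ via Gauss--Bonnet, and that is the whole proof. You instead build the metric explicitly (the flat potential by hand, the curved cases via the Schwarz triangle pullback of the model metric), and for $K\le 0$ you give a self-contained uniqueness argument via the energy identity for $u=\phi_1-\phi_2$, citing the literature only for spherical uniqueness. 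What your approach buys is a more transparent, largely self-contained argument that also previews the constructive content of Subsection~\ref{UACCM}; what the paper's approach buys is brevity and a clean separation of concerns, with the explicit construction deferred to Proposition~\ref{METRIC}. One caution: your existence step appeals forward to Proposition~\ref{METRIC}, whose proof in the paper cites Lemma~\ref{PotEU} for uniqueness; this is not circular, since you use only the \emph{construction} from~\ref{UACCM}, but it would be cleaner to phrase the existence directly in terms of the hypergeometric pullback rather than by reference to a later proposition.
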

\begin{proof} Recall that $2\pi(|\pmb\beta|+2)=K$ is the Gaussian curvature of the metric $e^{2\phi}|dz|^2$.
 Let us  consider the hyperbolic case ($K<0$), the flat case ($K=0$), and the spherical case ($K>0$) separately.

In the hyperbolic case we have $|\pmb\beta|<-2$ and  the inequalities  $\beta_j-|\pmb\beta|/2>0$ are a priori satisfied as $\beta_j\in(-1,0)$.  By the classical result of Picard~\cite{Picard}, there exists a unique conformal metric $e^{2\phi}|dz|^2$ of Gaussian curvature $K=2\pi(|
\pmb\beta|+2)<0$ representing the divisor $\pmb\beta=\sum \beta_j\cdot p_j$.   The unit area condition is equivalent to the equality $K=2\pi(|\pmb\beta|+2)$  as the Gauss-Bonnet theorem~\cite{Troyanov} reads $\int_{\Bbb C} K e^{2\phi}\frac {dz\wedge d\bar z }{-2i}=2\pi(|\pmb\beta|+2)$. 

 In the flat case we have $|\pmb\beta|=-2$. The inequalities  $\beta_j-|\pmb\beta|/2>0$ are automatically satisfied again. The assertion of lemma is a reformulation of results in~\cite[\S5]{TroyanovSSC}, where the unit area condition guarantees    uniqueness.

In the spherical case we have $|\pmb\beta|>-2$. This together with the inequalities  $\beta_j-|\pmb\beta|/2>0$ is  equivalent to the Troyanov condition
$$
0<|\pmb\beta|+2<2\min (\beta_j+1)
$$ 
that guarantees existence~\cite{Troyanov}. In general, the latter condition is a technical requirement needed for applicability of  Troyanov's method. However, for $\beta_j\in(-1,0)$ the  Troyanov condition is known to be necessary and sufficient for the existence  of  a conformal metric $e^{2\phi}|dz|^2$ of Gaussian curvature $K>0$ representing the divisor $\pmb\beta$, see~\cite{LT,UY,Eremenko}. Moreover, this metric is unique.  As in the hyperbolic case, the unit area condition is equivalent to the equality $K=2\pi(|\pmb\beta|+2)$    thanks to the Gauss-Bonnet theorem. 
\end{proof}

\subsection{Unit area singular metric in a closed explicit form}\label{UACCM}
By Lemma~\ref{PotEU}  there exists a unique  unit area constant  curvature  metric $e^{2\phi}|dz|^2$ with three distinct conical singularities of order $\beta_j\in(-1,0)$ satisfying $\beta_j-\frac{|\pmb\beta|}2>0$. 
In this subsection we construct  the metric $e^{2\phi}|dz|^2$ in a closed explicit form.  

We rely on classical  methods that  go back to the work of Riemann, Klein, Koebe, Schwarz, Pincar\'e, and Picard. The explicit form of constant curvature metrics with three  singularities  was known to  physicists for quite some time, e.g.~\cite{BG,Z-Z}. It was also studied by mathematicians, see e.g.~\cite{Kraus2011} and references therein. However,  we did not find in the literature any universal formula for the hyperbolic, flat, and spherical metrics of constant  curvature prescribed by the sum of the orders of conical singularities. So we deduce one here. 
 We also clarify some links with the Liouville field theory.

It is convenient to normalize the marked points $p_j$ so that $p_1=0$, $p_2=1$, and $p_3=\infty$. This can always be done by means of the M\"obius transformation 
$$
z\mapsto f(z)=\frac{p_2-p_3}{p_2-p_1}\cdot \frac{z-p_1}{z-p_3} .
$$
 Moreover, for the metric potential $\phi$ from Lemma~\ref{PotEU} we have 
$$
\phi(z)=\varphi\circ f(z)+\log|f'(z)|,
$$ 
where $\varphi$ stands for the potential of the unit area constant curvature metric $e^{2\varphi}|dz|^2$ representing the divisor
$$
 \pmb \beta= \beta_1\cdot 0+\beta_2\cdot 1+\beta_3\cdot \infty.
$$

The metric potential $\varphi$ can be found in the form 
\begin{equation}\label{MPST}
\varphi=\log\frac{2|w'|} {1+2\pi(|\pmb\beta|+2) |w|^2},
\end{equation}
where  the  developing map $w$ is  analytic in $\Bbb C\setminus\{0,1\}$, and $w'=\partial_z w$. For the 
 Schwarzian derivative $\{w,z\}=\frac{2 w'  w'''-3 w''^2}{2w'^2}$  we  obtain
$$
\{w,z\}=2 \left( \partial^2_z \varphi -(\partial_z\varphi)^2\right)=:T_\varphi(z),
$$
where $T_\varphi$ is the classical stress-energy tensor. As a consequence of the Liouville equation~\eqref{LAapp} we get
$$\begin{aligned}
0=\partial_{ z}\left( \partial_z\partial_{\bar z} \varphi +\frac K  4 e^{2\varphi }\right)=\frac 1 2 \partial_{\bar z} T_\varphi, \quad z\in\Bbb C\setminus\{0,1\}.
\end{aligned}
$$
The following expression and the asymptotics at infinity are due to Schwarz:
$$
T_\varphi(z)=\frac {\delta_1}{2z^2}+\frac {h_1}{z}+\frac {\delta_2}{2(1-z^2)}+\frac {h_2}{1-z},\quad T_\varphi(z)=\frac {\delta_3}{2z^2}+\frac{h_3}{z^3}+O(z^{-4})\quad\text{as}\quad z\to\infty,
$$
see e.g.~\cite{Car}. Here
$$
h_1=h_2=\frac{\delta_1+\delta_2-\delta_3}{2},
\quad
h_3=\frac{\delta_2+\delta_3-\delta_1}{2}
$$
are the accessory parameters and  $\delta_j=-\beta_j(2+\beta_j)$. 

Consider the hypergeometric differential equation 
\begin{equation}\label{HypEqn}
z(1-z)u''+(-\beta_1-(\beta_3-1-|\pmb\beta|)z)u'-  (\beta_3-|\pmb \beta|/2)(-1-|\pmb \beta|/2)u=0.
\end{equation}
It so happens that  the quotient $w=u_2/u_1$ of any two linearly independent solutions $u_1$ and $u_2$ to the hypergeometric equation satisfies the equation $\{w,z\}=T_\varphi(z)$. 

For instance, one can take 
$$
u_1(z)=F(\beta_3-|\pmb \beta|/2,-1-|\pmb \beta|/2,-\beta_1; z),
$$
$$
u_2(z)=c_{\pmb\beta} z^{\beta_1+1} F(1-\beta_2+|\pmb \beta|/2,\beta_1-|\pmb \beta|/2,2+\beta_1; z),
$$
where $F(a,b,c;z)$ stands for the hypergeometric function and $c_{\pmb\beta}$ is a scaling factor.
As a result we arrive at  the Schwarz triangle function 
\begin{equation}\label{STF}
w( z)=c_{\pmb\beta} z^{\beta_1+1} \frac{F(1-\beta_2+|\pmb \beta|/2,\beta_1-|\pmb \beta|/2,2+\beta_1;  z)}{F(\beta_3-|\pmb \beta|/2,-1-|\pmb \beta|/2,-\beta_1;z)}.
\end{equation}
This function satisfies  $w(0)=0$. As we show in the proof  of  Proposition~\ref{METRIC} below,  by setting
\be\label{CSF}
c_{\pmb\beta}=\frac {\exp\{\Phi(\beta_1,\beta_2,\beta_3)\}}{\beta_1+1}
\ee
with the function  $\Phi$  defined in~\eqref{Phi}, we normalize the Schwarz triangle function $w( z)$  so that it maps the upper half-plane $\Im z>0$ to a geodesic triangle $OAB$ in  the model metric 
\begin{equation}\label{ModelMetric}
\frac {4|d w|^2}{(1+2\pi(|\pmb\beta|+2)|w|^2)^{2}}
 \end{equation}
of  Gaussian  curvature $K=2\pi(|\pmb\beta|+2)$, cf. Fig.~\ref{TemplateHyp}, Fig.~\ref{TemplateSph}, and Fig.~\ref{Template}. Or, equivalently,  so that the metric potential in~\eqref{MPST} 
 is a real single-valued function on $\overline{\Bbb C}$, see~e.g.~\cite{BG,CMS,Eremenko,HJ,Kraus2011,LT,T-Z,UY}.  
 
 In the context of the  Liouville quantum field theory, the metric potential $\varphi$ is the field,  the coefficients $\delta_j$ are the conformal dimensions or weights, $T_\varphi$ is the $(2,0)$-component of the stress-energy tensor, 
 see  e.g.~\cite{BPZ,C-W,CMS,DO,HJ,T-Z,Z-Z}.

 \begin{figure}[h]
\centering\begin{tikzpicture}[scale=1.8]
\draw[gray!50,fill=gray!25]  (6.75,0)--(0,0) -- (5,2) arc (-165:-112:3cm);
\draw[gray!50,fill=gray!50]  (0,0) -- (5,-2) arc (165:112:3cm);
\draw[black,solid,thick] (6.75,0)node[anchor=west]{$A$}--(0,0)node[anchor=north east]{$O$} -- (5,2)node[anchor=south]{$B$} arc (-165:-112:3cm) ; 
\draw[black,solid,thick]  (0,0) -- (5,-2)node[anchor=north]{$B'$} arc (165:112:3cm); 

\filldraw[black] (0,0) circle (1.5pt);\filldraw[black] (6.75,0) circle (1.5pt);\filldraw[black] (5,2) circle (1.5pt);\filldraw[black] (5,-2) circle (1.5pt);

\draw[black] (1.5,0) arc (0:33:1cm);
\draw[black] (2,0.15)node[anchor=south]{$\pi(\beta_1+1)$};

\draw[black] (5.2,1.48) arc (-73:-146:.7cm);
\draw[black] (4.7,1.48)node[anchor=north]{$\pi(\beta_3+1)$};

\draw[black] (5.5,0) arc (180:138:1cm);
\draw[black] (5,.15)node[anchor=south]{$\pi(\beta_2+1)$};
\end{tikzpicture}
\caption{   Hyperbolic  geodesic triangle  $OAB$ with internal angles $\pi(\beta_j+1)$,  $|\pmb\beta|<-2$, and its reflection $OAB'$ in the side $OA$. }
\label{TemplateHyp}
\end{figure}
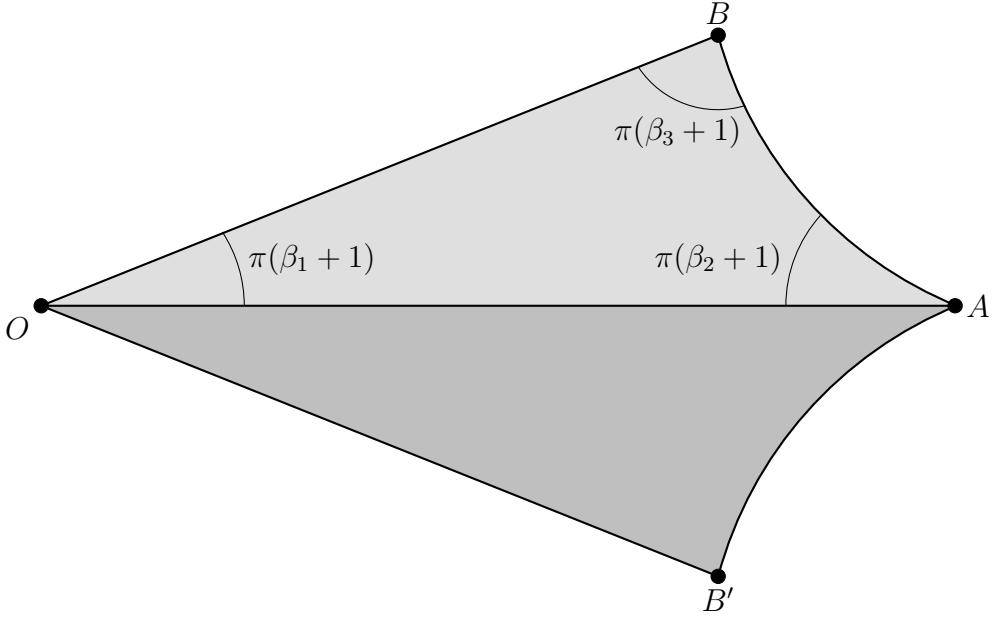

\begin{figure}[h]
\centering\begin{tikzpicture}[scale=1.8]
\draw[gray!50,fill=gray!25]  (4.98,2.01)--(0,0) --(6.75,0)arc (180-165:180-112:3cm) ;
\draw[gray!50,fill=gray!50]  (4.98,-2.01)--(0,0) --(6.75,0)arc (-180+165:-180+112:3cm) ;
\draw[black,solid,thick] (4.98,2.01)node[anchor=south]{$B$}--(0,0)node[anchor=north east]{$O$} --(6.75,0) node[anchor=west]{$A$}arc (180-165:180-112:3cm)   ; 
\draw[black,solid,thick]  (4.98,-2.01)node[anchor=north]{$B'$}--(0,0) --(6.75,0)arc (-180+165:-180+112:3cm);

\filldraw[black] (0,0) circle (1.5pt);\filldraw[black] (6.75,0) circle (1.5pt);\filldraw[black] (5,2) circle (1.5pt);\filldraw[black] (5,-2) circle (1.5pt);

\draw[black] (1.5,0) arc (0:33:1cm);
\draw[black] (2,0.15)node[anchor=south]{$\pi(\beta_1+1)$};

\draw[black] (5.55,1.7) arc (-50:-135:.9cm);
\draw[black] (4.9,1.55)node[anchor=north]{$\pi(\beta_3+1)$};

\draw[black] (5.9,0) arc (180:122:1cm);
\draw[black] (5.5,.3)node[anchor=south]{$\pi(\beta_2+1)$};
\end{tikzpicture}
\caption{Spherical  geodesic triangle  $OAB$ with internal angles $\pi(\beta_j+1)$,  $|\pmb\beta|>-2$, and its reflection $OAB'$ in the side $OA$.}
\label{TemplateSph}
\end{figure}

\begin{figure}[h]
\centering\begin{tikzpicture}[scale=1.8]
\draw[gray!50,fill=gray!25] (0,0) -- (5,2)--(7,0)--(0,0);
\draw[gray!50,fill=gray!50] (0,0) -- (7 ,0) -- (5,-2)--(0,0); 
\draw[black,solid,thick] (0,0)node[anchor=north east]{$O$} -- (5,2)node[anchor=south]{$B$}--(7,0)node[anchor=west]{$A$}--(0,0); 
\draw[black,solid, thick]  (7 ,0) -- (5,-2)node[anchor=north]{$B'$}--(0,0); 

\filldraw[black] (0,0) circle (1.5pt);\filldraw[black] (7,0) circle (1.5pt);\filldraw[black] (5,2) circle (1.5pt);\filldraw[black] (5,-2) circle (1.5pt);

\draw[black] (1.5,0) arc (0:33:1cm);
\draw[black] (2,0.15)node[anchor=south]{$\pi(\beta_1+1)$};

\draw[black] (5.5,1.5) arc (-60:-138:1cm);
\draw[black] (4.9,1.4)node[anchor=north]{$\pi(\beta_3+1)$};

\draw[black] (6,0) arc (180:135:1cm);
\draw[black] (5.6,.25)node[anchor=south]{$\pi(\beta_2+1)$};
\end{tikzpicture}
\caption{Euclidean triangle  $OAB$ with internal angles $\pi(\beta_j+1)$,  $|\pmb\beta|=-2$, and its reflection $OAB'$ in the side $OA$.} \label{Template}
\end{figure}
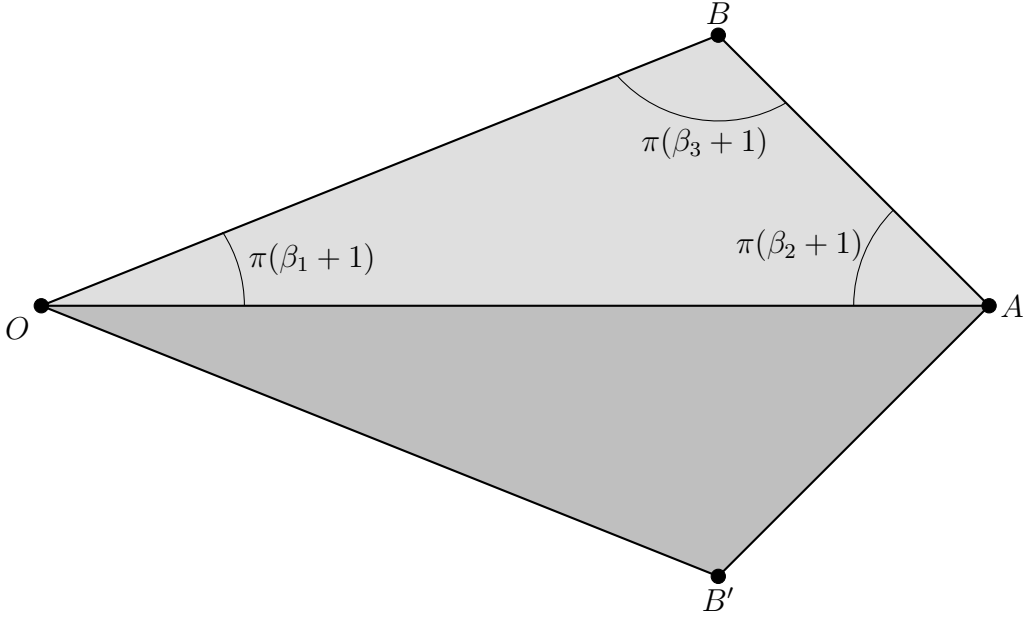

 The Schwarz triangle function $w( z)$ in~\eqref{STF}  maps the point $z=0$ to the origin  $O$,   the point $z=1$ to the vertex $A$, and the upper half-plane $\Im z>0$ to 
 \begin{itemize}
 \item  The hyperbolic geodesic triangle $OAB$ in Fig.~\ref{TemplateHyp}, if $|\pmb\beta|<-2$;
 
 \item The spherical geodesic triangle $OAB$ in Fig.~\ref{TemplateSph}, if $|\pmb\beta|>-2$; 
 
 \item The Euclidean triangle $OAB$ in Fig.~\ref{Template}, if  $|\pmb\beta|=-2$.
 \end{itemize}
 The analytic continuation of $ z\mapsto w( z)$ (from the upper half-plane $\Im z>0$ through the interval $(0,1)$ of the real axis) maps the lower half-plane $\Im  z<0$ into the reflection $OAB'$ of the geodesic triangle $OAB$ in the side $OA$. 

We use the Schwarz triangle function~\eqref{STF} as the developing map for the model metric~\eqref{ModelMetric}, i.e. we introduce  the metric $e^{2\varphi}|d z|^2$ as the pullback of the  model metric~\eqref{ModelMetric} by the Schwarz triangle function $w( z)$. Or, equivalently, we find the metric potential $\varphi$ in the form~\eqref{MPST}. Thus we  obtain the unit area,  Gaussian curvature $K=2\pi(|\pmb \beta|+2)$ conformal metric 
\begin{equation}\label{01inf}
e^{2\varphi}|dz|^2= \frac{4|w'(  z)|^2\,|d z|^2}{(1+2\pi(|\pmb \beta|+2)|w( z)|^2)^2}
\end{equation}
 representing the divisor
$$
 \pmb \beta= \beta_1\cdot 0+\beta_2\cdot 1+\beta_3\cdot \infty.
$$
By Lemma~\ref{PotEU} there exists exactly one metric with these properties. Thus the metric~\eqref{01inf} is the explicit solution to the singular Nirenberg problem, see also Remark~\ref{EFphi} at the end of this section.

In the case $|\pmb\beta|<-2$  (resp. $|\pmb\beta|>-2$) the unit area surface $(\overline{\Bbb C}, e^{2\varphi}|dz|^2)$  can be visualized as  a hyperbolic  (resp. spherical) geodesic triangle with internal angles $\pi(\beta_j+1)$ glued along the edges to its reflection in a side; see Fig.~\ref{TemplateHyp}  and Fig.~\ref{TemplateSph}, where the geodesic triangles are already glued along $OA$, the resulting geodesic quadrilateral needs to be folded along $OA$, then $OB$  should be glued to $OB'$ and $OA$ to $OA'$.

 In the  case   $|\pmb \beta|=-2$ the model metric~\eqref{ModelMetric} is flat. The Schwarz triangle function reduces to the  Schwarz-Christoffel transformation
\be\label{S-C}
w( z)\bigr|_{|\pmb\beta|=-2}
=\exp\{\Phi(\beta_1,\beta_2,\beta_3)\}\int_0^{  z}   z^{\beta_1} (1-z)^{\beta_2}\,d z.
\ee
For the pullback of the model  metric~\eqref{ModelMetric} with $|\pmb\beta|=-2$ by $w( z)\bigr|_{|\pmb\beta|=-2}$ we immediately obtain
\be\label{fm}
e^{2\varphi}|dz|^2\bigr|_{|\pmb\beta|=-2}= 4\exp\{2\Phi(\beta_1,\beta_2,\beta_3)\} | z|^{2\beta_1}| z-1|^{2\beta_2}|d z|^2.
\ee
The  surface $(\overline{\Bbb C}, e^{2\varphi}|dz|^2\bigr|_{|\pmb\beta|=-2})$ can be visualized as a flat triangle envelope: a Euclidean triangle (of area $1/2$) glued along the edges to its reflection in a side, cf.~Fig.~\ref{Template}.

In other words, we  explicitly constructed the following uniformization of the unit area constant curvature genus zero surfaces with three conical singularities: the Riemann sphere $\overline{\Bbb C}$  equipped with the singular  metric $e^{2\varphi}|dz|^2$ in~\eqref{01inf} is isometric to a constant curvature surface glued from a  Hyperbolic,  Spherical, or Euclidean geodesic triangle and its reflection in a side. The isometry is given by the Schwarz triangle function~\eqref{STF}.

Up to now for the three distinct marked points $p_j$ of the Riemann sphere $\overline{\Bbb C}$ we were using the normalization $p_1=0$, $p_2=1$, and $p_3=\infty$. Now we apply the M\"obius transformation $ \hat  z=\frac{1+z}{1-z}$ and pass to the normalization $p_1=-1$, $p_2=0$, and $p_3=1$ that we use throughout the paper.

\begin{proposition}\label{METRIC}  Assume that $\beta_j\in(-1,0)$ and $\beta_j-\frac{|\pmb\beta|}2>0$ for all $j=1,2,3$.   Then  there exists a unique unit area  constant curvature
conformal metric $e^{2\phi}|dz|^2$   representing the divisor
\be\label{DIV}
\pmb\beta=\beta_1\cdot (-1)+\beta_2\cdot 0+\beta_3\cdot 1.
\ee
The  potential $\phi$ of this metric  satisfies the estimates 
\begin{equation}\label{est_phi}
\begin{aligned}
\phi(z) & =\beta_1\log|z+1|+\phi_1+o(1), \quad z\to -1,
\\
\phi(z) & =\beta_2\log|z|+\phi_2+o(1), \quad z\to 0,
\\
\phi(z) & =\beta_3  \log|z-1| +\phi_3+o(1), \quad z\to 1,
\end{aligned}
\end{equation}
with the  coefficients
\begin{equation}\label{phi123}
\begin{aligned}
\phi_1  & = -\beta_1\log 2 +  \Phi(\beta_1,\beta_2,\beta_3),
\\
\phi_2& =(\beta_2+2)\log 2 + \Phi(\beta_2,\beta_1,\beta_3),
\\
\phi_3 & =  - \beta_3\log 2   +\Phi(\beta_3,\beta_2,\beta_1).
\end{aligned}
\end{equation}
Here
\begin{equation}\label{Phi}
\begin{aligned}
\Phi(\beta_1,\beta_2,\beta_3)=&  \frac 1 2 \log   \frac{\Gamma\left(2+{|\pmb \beta|}/2\right)}{  4\pi \Gamma\left(-{|\pmb \beta|} /2\right) }
+ \log \frac{\Gamma(-\beta_1)} {\Gamma(1+\beta_1)}
\\
 &+\frac 1 2 \log \frac{\Gamma(\beta_1-|\pmb \beta|/2)
 \Gamma(1+|\pmb \beta|/2-\beta_2)  \Gamma(1+|\pmb \beta|/2-\beta_3) }{   \Gamma(1+|\pmb \beta|/2-\beta_1)    \Gamma(\beta_2-|\pmb \beta|/2) \Gamma(\beta_3-|\pmb \beta|/2) }.
 \end{aligned}
\end{equation}
In addition, as $z\to\infty$  the metric potential $\phi$ meets the estimate
\begin{equation}\label{inf_est}
\phi(z)  =-2  \log|z| +\phi_{\infty}+o(1)
\end{equation}
with
\begin{equation}\label{phi_infty}
\phi_\infty=2\log 2 +\log|w'(-1)| -\log (1+2\pi(2+|\pmb \beta|)|w(-1)|^2),
\end{equation}
where $w(z)$  is  the Schwarz triangle function  defined via~\eqref{STF},~\eqref{SF}, and ~\eqref{s2}.
\end{proposition}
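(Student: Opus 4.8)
The existence and uniqueness of the metric are already contained in Lemma~\ref{PotEU}, so the whole point of the proof is to extract the coefficients $\phi_1,\phi_2,\phi_3,\phi_\infty$. The plan is to first establish one ``master'' statement in the normalization $p_1=0$, $p_2=1$, $p_3=\infty$ and then transport it by M\"obius maps. Concretely, I would show that the pullback $\varphi=\log\frac{2|w'|}{1+2\pi(|\pmb\beta|+2)|w|^2}$ of the model metric \eqref{ModelMetric} by the Schwarz triangle function $w$ of \eqref{STF} is the (unique, by Lemma~\ref{PotEU}) unit-area constant-curvature conformal metric representing $\beta_1\cdot 0+\beta_2\cdot 1+\beta_3\cdot\infty$, and that near $\zeta=0$ it has the expansion $\varphi(\zeta)=\beta_1\log|\zeta|+\log 2+\Phi(\beta_1,\beta_2,\beta_3)+o(1)$, with $\Phi$ as in \eqref{Phi}. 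This hinges on the choice \eqref{CSF} of the scale $c_{\pmb\beta}$, whose justification is the heart of the argument.

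To justify the master statement I would proceed in four steps. First, since $w$ is a local biholomorphic isometry onto the model metric of curvature $2\pi(|\pmb\beta|+2)$, the potential $\varphi$ solves the Liouville equation \eqref{LAapp} away from $\{0,1,\infty\}$, and the exponents $\beta_1+1,\beta_2+1,\beta_3+1$ of $w$ at $0,1,\infty$ force conical singularities of orders $\beta_1,\beta_2,\beta_3$ with the asymptotics built into \eqref{MPST}. Second, single-valuedness of $\varphi$ on $\overline{\Bbb C}$ requires the monodromy of $w=u_2/u_1$ around the punctures to act by isometries of the model metric; since the hypergeometric monodromy is rigid and, for angles in $(0,\pi)$, conjugate into the isometry group, this fixes the normalization of $w$ uniquely, equivalently it forces $O=w(0)=0$, $A=w(1)$, $B=w(\infty)$ to be the vertices of a geodesic triangle with angles $\pi(\beta_j+1)$. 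Third, that triangle is determined up to isometry by its angles and the curvature, so, placing $O$ at the origin (whose radial geodesics are Euclidean rays), the geodesic length of $OA$, hence $|w(1)|$, is pinned down; comparing this with the value of $w(1)$ obtained from Gauss's connection formula ${}_2F_1(a,b,c;1)=\Gamma(c)\Gamma(c-a-b)/\bigl(\Gamma(c-a)\Gamma(c-b)\bigr)$ and simplifying with the $\Gamma$-reflection identity yields exactly $c_{\pmb\beta}=e^{\Phi(\beta_1,\beta_2,\beta_3)}/(\beta_1+1)$, i.e.\ \eqref{CSF}. Fourth, the resulting metric has area $\frac{2}{2\pi(|\pmb\beta|+2)}\bigl(\sum_j\pi(\beta_j+1)-\pi\bigr)=1$ by Gauss--Bonnet (alternatively one simply invokes uniqueness in Lemma~\ref{PotEU}), and reading off the leading term of $\varphi$ at $\zeta=0$, where $w\sim c_{\pmb\beta}\zeta^{\beta_1+1}$ and $|w|\to 0$, gives $\log\bigl(2(\beta_1+1)c_{\pmb\beta}\bigr)=\log 2+\Phi(\beta_1,\beta_2,\beta_3)$.

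Given the master statement, the three coefficients $\phi_j$ follow by bookkeeping. For each $j$ I would choose the M\"obius map $z\mapsto\zeta$ carrying $(-1,0,1)$ to $(0,1,\infty)$ in the order that sends the point bearing order $\beta_j$ to $\zeta=0$ (for $j=1$ this is $\zeta=\frac{1+z}{1-z}$; for $j=2$, $\zeta=\frac{2z}{z-1}$; for $j=3$, $\zeta=\frac{1-z}{1+z}$). Then $\phi=\widetilde\varphi\circ\zeta+\log|\zeta'|$, where $\widetilde\varphi$ is the master potential for the correspondingly permuted divisor, and expanding near the marked point — using the explicit, elementary local behaviour of $\zeta$ and $\zeta'$ there — combines $\log 2+\Phi$ of the permuted arguments with the $\log 2$-terms produced by the Jacobian, reproducing \eqref{phi123}. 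For $\phi_\infty$ the situation is easier: under $\widehat z=\frac{1+z}{1-z}$ the point $z=\infty$ corresponds to $\widehat z=-1$, which is a regular point of $\varphi$, so $\varphi(\widehat z)=\varphi(-1)+o(1)=\log\frac{2|w'(-1)|}{1+2\pi(|\pmb\beta|+2)|w(-1)|^2}+o(1)$; together with $\widehat z\to -1$ and $\widehat z'(z)\sim 2/z^2$, the transformation law $\phi=\varphi\circ\widehat z+\log|\widehat z'|$ gives \eqref{phi_infty} directly.

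The main obstacle is the third step: proving that single-valuedness of the developing map is equivalent to the specific value \eqref{CSF} of $c_{\pmb\beta}$. This requires combining the classical fact that an ``elliptic'' hypergeometric monodromy group is conjugate into $\mathrm{PSU}(2)$, $\mathrm{PSU}(1,1)$, or $\mathrm{Isom}(\Bbb E^2)$ according to the sign of $|\pmb\beta|+2$, the geodesic-triangle trigonometry (law of cosines) in the model metric of curvature $2\pi(|\pmb\beta|+2)$ that fixes $|w(1)|$, and Gauss's ${}_2F_1$ connection formula at unit argument, and then reducing the resulting product of $\Gamma$-values to the closed form \eqref{Phi} via reflection identities. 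Everything else — the Liouville-equation verification, the identification of the cone orders, the M\"obius Jacobian bookkeeping for \eqref{phi123}, and the evaluation giving \eqref{phi_infty} — is routine.
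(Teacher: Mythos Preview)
Your proposal is correct and follows essentially the same architecture as the paper's proof: establish the ``master'' expansion $\varphi(\zeta)=\beta_1\log|\zeta|+\log 2+\Phi(\beta_1,\beta_2,\beta_3)+o(1)$ in the normalization $(0,1,\infty)$ via the explicit Schwarz triangle developing map, then transport by three M\"obius maps to obtain $\phi_1,\phi_2,\phi_3$, and finally evaluate $\varphi$ at the regular preimage of $z=\infty$ for $\phi_\infty$.

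The only genuine difference is in your third step, the determination of $c_{\pmb\beta}$. The paper does not carry out the monodromy/geodesic-trigonometry argument you outline; instead it rescales the model metric to the standard curvature $\pm 1$ case and quotes the classical closed-form expression for the scaling factor $s$ (so that $w(1)=s$ lands on the geodesic) directly from Carath\'eodory's treatise, obtaining~\eqref{s2} in each of the hyperbolic and spherical cases separately and then checking that $\Phi(\beta_1,\beta_2,\beta_3)=\log\bigl((\beta_1+1)c_{\pmb\beta}\bigr)$. Your route---forcing the hypergeometric monodromy into the isometry group, reading off $|w(1)|$ from the law of cosines in curvature $2\pi(|\pmb\beta|+2)$, and matching it against the Gauss summation value---is self-contained and treats all curvatures uniformly, at the cost of actually performing the $\Gamma$-identity reduction that the paper sidesteps by citation. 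Everything else (Liouville verification, cone orders, the M\"obius bookkeeping, Gauss--Bonnet for the area, and the evaluation at $\widehat z=-1$) matches the paper's proof.
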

 Note that  the arguments of all gamma functions in~\eqref{Phi} stay positive. Indeed, for $\beta_j\in(-1,0)$ the expressions $1+|\pmb \beta|/2-\beta_j$ are always positive.
In the hyperbolic case we have $|\pmb\beta|<-2$, and hence the inequalities  $\beta_j-|\pmb\beta|/2>0$ hold true. In the spherical case we have $|\pmb\beta|>-2$, and  the conditions   $\beta_j-|\pmb\beta|/2>0$  are necessary and sufficient for the existence of the metric with three conical singularities of order $\beta_j\in(-1,0)$.
 In the flat case the equality~\eqref{Phi} takes the form 
\be\label{fmPhi}
\Phi(\beta_1,\beta_2,\beta_3)\bigr|_{|\pmb\beta|=-2}=\frac 1 2 \log \frac  {\Gamma(-\beta_1) \Gamma(-\beta_2) \Gamma(-\beta_3)}  {4\pi \Gamma(\beta_1+1)\Gamma(\beta_2+1)\Gamma(\beta_3+1)}. 
\ee

\begin{proof}[Proof of Proposition~\ref{METRIC}]

 By setting
\begin{equation}\label{SF}
c_{\pmb\beta}=s\frac{\Gamma(-\beta_1)\Gamma(1+|\pmb \beta|/2- \beta_3)\Gamma(2+|\pmb \beta|/2)}{\Gamma(2+\beta_1)\Gamma(\beta_2-|\pmb \beta|/2)\Gamma(1+|\pmb \beta|/2-\beta_1)}
\end{equation}
in~\eqref{STF} we achieve    $w(1)=s$, see e.g.~\cite[Vol.\,2,\S392]{Car}. Now we need to find $s>0$   so that  $w( z)$ maps the upper half-plane $\Im z>0$ to a geodesic triangle in  the model metric~\eqref{ModelMetric}.

 In the case $|\pmb\beta|<-2$ the negative curvature model metric~\eqref{ModelMetric} can be reduced to the standard Gaussian curvature $-1$ metric  
 $
 {4(1-| z^2|)^{- 2}|d z|^2}{}
 $ 
 in the Poincar\'e disk $| z|<1$ by the substitution $ w=(-2\pi(|\pmb\beta|+2))^{-1/2} z$    (with subsequent multiplication of the resulting metric by  $-2\pi(|\pmb\beta|+2)$, which does not affect the shape of geodesics). The expression for  $s$, which guarantees that the triangle is a geodesic triangle in the standard Poincar\'e disk (of Gaussian curvature $-1$), is well-known (see e.g.~\cite[Vol.\,2, eqn. (392.4)]{Car}). In order to make the triangle geodesic with respect to the metric~\eqref{ModelMetric}, we only need to multiply that known expression  by $(-2\pi(|\pmb\beta|+2))^{-1/2}$. As a result,  for $s$ in~\eqref{SF} we obtain
\begin{equation}\label{s2}
 s^2=\frac{  \Gamma(\beta_1 -|\pmb \beta|/2)\Gamma(\beta_2-|\pmb \beta|/2)\Gamma(1+|\pmb \beta|/2 -\beta_1)\Gamma(1+|\pmb \beta|/2-\beta_2)}{4\pi\Gamma(-|\pmb \beta|/2)\Gamma(2+|\pmb \beta|/2) \Gamma(\beta_3-|\pmb \beta|/2) \Gamma(1-\beta_3+|\pmb \beta|/2)}.
\end{equation}

Similarly, in the case $|\pmb\beta|>-2$ the metric ~\eqref{ModelMetric} can be reduced to the standard  spherical (Gaussian curvature one) metric 
$ {4 (1+| z|^2)^{-2}| d z|^2}{}$ 
by the substitution $ w=(2\pi(|\pmb\beta|+2))^{-1/2} z$    (again with subsequent multiplication of the resulting metric by  $2\pi(|\pmb\beta|+2)$, which does not affect the shape of geodesics). For the standard spherical metric the expression for $s$, that guarantees that the triangle is a geodesic spherical triangle, is also well known, see e.g.~\cite[Vol.\,1,  eqn.~(64.11) and (72.5)]{Car}. Multiplying that known expression by $(2\pi(|\pmb\beta|+2))^{-1/2}$, we come to exactly the same equality~\eqref{s2} as  above. Thus in the case $|\pmb\beta|>-2$ the function $w( z)$ maps the upper half-plane $\Im z>0$ to a geodesic spherical triangle in  the metric~\eqref{ModelMetric}.

Now let us notice that the equality~\eqref{Phi} is equivalent to 
$$
\Phi(\beta_1,\beta_2,\beta_3):=\log (\beta_1+1)+\log c_{\pmb\beta}
$$
with $c_{\pmb\beta}$ defined by~\eqref{SF} and~\eqref{s2}. 

In the flat case $|\pmb\beta|=-2$ the Schwarz triangle function reduces to the  Schwarz-Christoffel transformation~\eqref{S-C} that maps the upper half-plane  to a Euclidean triangle.

We have demonstrated that  the choice of the scaling factor $c_{\pmb\beta}$ in~\eqref{CSF} is correct, i.e. the Schwarz triangle function~\eqref{STF} maps the upper half-plane $\Im z>0$ to a geodesic triangle in the model metric~\eqref{ModelMetric}. Or, equivalently, that the metric potential~\eqref{MPST}  is a real single-valued function on $\overline{\Bbb C}$. This justifies the construction of  the unit area Gaussian curvature $K=2\pi(|\pmb \beta|+2)$ conformal metric $e^{2\varphi}|dz|^2$ in~\eqref{01inf}.
 (It is a unit area metric as it follows from the Gauss-Bonnet theorem for $|\pmb\beta|\neq -2$, and either from a direct verification or by continuity for $|\pmb\beta|=-2$.)   By Lemma~\ref{PotEU} this metric is unique. 

At $ z=0$ the Schwarz triangle function $w( z)$ in~\eqref{STF} and its derivative
\begin{equation}\label{w'}
w'( z)=\frac{ c_{\pmb\beta}(\beta_1+1){ z}^{\beta_1}  (1- z)^{\beta_2}}{ (F( \beta_3-|\pmb \beta|/2,-1-|\pmb \beta|/2,-\beta_1; z))^2}
\end{equation}
 take the values
$$
w(0)=0,\quad  z^{-\beta_1} w'( z)|_{ z=0}=c_{\pmb\beta}(\beta_1+1)=\exp
\{\Phi(\beta_1,\beta_2,\beta_3)\}.
$$
Hence for the potential  of the metric $e^{2\varphi}|dz|^2$ in~\eqref{01inf} 
we have
$$
\varphi( z)=\beta_1\log| z |+\log 2+\Phi(\beta_1,\beta_2,\beta_3)+o(1), \quad z\to 0.
$$
The potential $\varphi ( z)$ is related to  the potential $\phi (z)$ of the metric~ $e^{2\phi}|dz|^2$ (corresponding to the normalization  $p_1=-1$, $p_2=0$, and $p_3=1$ of the marked points)
 by the equality
\begin{equation}\label{10:26}
\phi (z)= \varphi \circ f(z) +\log |\partial_z f(z)|   ,
\end{equation}
where $f$  is the M\"obius transformation $z\mapsto f(z)=\frac{1+z}{1-z}$. This implies the first estimate in~\eqref{est_phi} with the coefficient $\phi_1$ given in~\eqref{phi123}.
 
 Similarly, (by interchanging the roles of $\beta_1$ and $\beta_2$) for the potential $\varphi$ of a unit area, Gaussian  curvature $2\pi(|\pmb\beta|+2)$ metric representing the divisor
 $$ 
\pmb\beta= \beta_2\cdot 0+\beta_1\cdot 1+\beta_3\cdot\infty
$$ 
we obtain
$$
\varphi ( z)=\beta_2\log| z |+\log 2+\Phi(\beta_2,\beta_1,\beta_3)+o(1), \quad z\to 0.
$$
This potential $\varphi ( z)$ is related to  the potential $\phi (z)$ by the equality~\eqref{10:26}, where $f$  is the M\"obius transformation $z\mapsto f(z)=\frac{2z}{1-z}$.
This establishes the second estimate in~\eqref{est_phi} with $\phi_2$ given in~\eqref{phi123}.

Finally,  the potential of a unit area, curvature $2\pi(|\pmb\beta|+2)$ metric representing the divisor
 $$
\pmb\beta= \beta_3\cdot 0+\beta_2\cdot 1+\beta_1\cdot\infty
$$ 
satisfies
$$
\varphi(z)=\beta_3\log|z |+\log 2+\Phi(\beta_3,\beta_2,\beta_1)+o(1), \quad z\to 0.
$$
This potential $\varphi( z)$ is related to  the potential $\phi(z)$ by the equality~\eqref{10:26} with the M\"obius transformation $z\mapsto f(z)=\frac{1-z}{1+z}$.
This justifies the third estimate in~\eqref{est_phi}  with $\phi_3$ given in~\eqref{phi123}.

It remains to clarify the behaviour of $\phi(z)$ as $z\to\infty$. With this aim in mind we notice that
$$
w(-1)
=  2^{-1-\beta_1} \frac {\exp\{\Phi(\beta_1,\beta_2,\beta_3)\}}{\beta_1+1} \frac{F(1-\beta_2+|\pmb \beta|/2,2+|\pmb \beta|/2,2+\beta_1; 1/2)}{F(\beta_3-|\pmb \beta|/2,1-\beta_1+|\pmb \beta|/2,-\beta_1;1/2)},
$$
$$
|w'(-1)|
=\frac{ 2^{\beta_3-\beta_1}\exp\{\Phi(\beta_1,\beta_2,\beta_3)\}}{  (F( \beta_3-|\pmb \beta|/2,1-\beta_1+|\pmb \beta|/2,-\beta_1;1/2))^2};
$$
for the corresponding property of hypergeometric functions see e.g.~\cite[15.3.5]{AbSt}.
At $z=1/2$ the series defining the hypergeometric functions are rapidly convergent. Thus $\phi_\infty $ in~\eqref{phi_infty}  is well defined. For the potential $\varphi( z )$ of the metric~\eqref{01inf} we obtain
$$
\varphi(z)=\log 2 +\log|w'(-1)| -\log (1+2\pi(2+|\pmb \beta|)|w(-1)|^2)+o(1), \quad  z\to 0.
$$
Thanks to the equaltity~\eqref{10:26},  where $f(z)=\frac{1+z}{1-z}$, we arrive at the estimate~\eqref{inf_est}. This completes the proof. 
  \end{proof}
  
  \begin{remark}\label{EFphi}
For the potential $\phi$  of the constant curvature unit area metric  $e^{2\phi}|dz|^2$ representing the divisor $\pmb\beta=\beta_1\cdot(-1)+\beta_2\cdot 0+\beta_3\cdot 1 $ one can write out  the explicit closed formula
\begin{equation}\label{EEphi}
\begin{aligned}
\phi(z)=-2\log\left(     \frac  {1}{2^{\beta_2+2} (\beta_1+1)c_{\pmb\beta}} \left| \psi_1(z;\pmb\beta)\right|^2+ \frac{ 2^{\beta_2+1}  \pi(|\pmb\beta|+2)c_{\pmb\beta} } {\beta_1+1} \left| \psi_2(z;\pmb\beta)\right|^2  \right),
\end{aligned}
\end{equation}
where $c_{\pmb\beta}$ is the scaling factor~\eqref{CSF} with the function $\Phi$ defined in~\eqref{Phi}. The functions $\psi_1$ and $\psi_2$ in~\eqref{EEphi} are given by the equalities
\begin{equation*}
\begin{aligned}
\psi_1(z;\pmb\beta)  = (  {1+z} )^{-\beta_1/2} z^{-\beta_2/2} & (1-z)^{1+|\pmb\beta|/2-\beta_3/2}
\\
 &\times F\left(\beta_3-\frac{|\pmb \beta|}2,-1-\frac{|\pmb \beta|}2,-\beta_1;\frac{1+z}{1-z}\right),
\\
\psi_2(z;\pmb\beta)  = ( {1+z} )^{\beta_1/2+1}  z^{1+\beta_2/2}  & (1-z)^{-1-|\pmb\beta|/2+\beta_3/2}
\\
&\times F\left(1-\beta_3+\frac{|\pmb\beta|} 2,2+\frac{|\pmb\beta|}{2},2+\beta_1;\frac{1+z}{1-z}\right),
\end{aligned}
\end{equation*}
where $F(a,b,c;z)$ stands for the hypergeometric function; cf.~\cite[eqns.~(4.2),~(4.4), and~(4.5), where $\eta_j=-\beta_j/2$]{Z-Z}. Indeed, the M\"obius transformation $z\mapsto \frac {1+z}{1-z}$  brings the explicitly constructed metric~\eqref{01inf}  into the form $e^{2\phi}|dz|^2$ with $\phi$ given in~\eqref{EEphi}.
\end{remark}

\end{document}